\documentclass[12pt]{amsart}

\usepackage{amsmath,amssymb,latexsym}
\usepackage{dsfont}
\usepackage[T1]{fontenc}
\usepackage{enumerate}

\usepackage[a4paper,top=3cm, bottom=3cm, inner=3.5cm,outer=2.7cm,nofoot,headsep=1.5cm]{geometry}
\parindent=12pt
\parskip=6pt 

\setlength{\marginparsep}{0.2cm}
\setlength{\marginparwidth}{2.5cm}
\setlength{\marginparpush}{0.5cm}

\def\Z{\mathbb Z}
\def\bG{\mathbb G}
\def\bH{\mathbb H}
\def\M{\widetilde{\mathfrak M}_c}
\def\Mn{\mathfrak M_c}

\def\F{\mathcal F}

\def\G{\mathcal G}
\def\S{\mathcal S}

\def\mH{\mathcal H}

\def\[{\tilde{[}}
\def\]{\tilde{]}}
\def\t{\widetilde}
\def\b{\mathbb}
\def\ls{\lesssim}

\def\tg{\t \gamma}
\def\supp{\operatorname{supp}}
\def\FC{\operatorname{FC}}
\def\FN{\operatorname{FN}}

\def\Ind#1#2{#1\setbox0=\hbox{$#1x$}\kern\wd0\hbox to 0pt{\hss$#1\mid$\hss}
\lower.9\ht0\hbox to 0pt{\hss$#1\smile$\hss}\kern\wd0}

\def\Notind#1#2{#1\setbox0=\hbox{$#1x$}\kern\wd0\hbox to 0pt{\mathchardef
\nn="3236\hss$#1\nn$\kern1.4\wd0\hss}\hbox to 0pt{\hss$#1\mid$\hss}\lower.9\ht0
\hbox to 0pt{\hss$#1\smile$\hss}\kern\wd0}

\theoremstyle{plain}
\newtheorem{theorem}{Theorem}[section]
\newtheorem{prop}[theorem]{Proposition}
\newtheorem{fact}[theorem]{Fact}
\newtheorem{lemma}[theorem]{Lemma}
\newtheorem{cor}[theorem]{Corollary}
\newtheorem*{claim}{Claim}

\theoremstyle{definition}
\newtheorem{defn}[theorem]{Definition}
\newtheorem{remark}[theorem]{Remark}

\newtheorem{expl}{Example}

\newtheorem*{cla}{Claim}

\newtheorem*{nota}{Notation}

\title{Almost group theory}
\author{Nadja Hempel}

\begin{document}

\begin{abstract}The notion of almost centralizer and almost commutator are introduced and basic properties are established. They are used to study $\M$-groups, i.\ e.\ groups for which every descending chain of centralizers each having infinite index in its predecessor stabilizes after finitely many steps. The Fitting subgroup of such groups is shown to be nilpotent and theorems of Fitting and Hall for nilpotent groups are generalized to ind-definable almost nilpotent subgroups of $\M$-groups.  \end{abstract}

\maketitle

\section{Introduction}

Groups in which every descending chain of centralizers stabilizes after finitely many steps, so called $\Mn$-groups, have been of great interest to both group and model theorist. They have been studied by  Altinel and Baginski \cite{AltBag}, Bryant \cite{Bry}, Bryant and Hardley \cite{Bry2},  Derakhshan and Wagner \cite{DerWag},  Poizat and Wagner \cite{PoiWag}. In the field of model theory they appear naturally as definable groups in stable and o-minimal theories. Passing to groups definable in simple theories or even more general rosy theories, we obtain a weaker chain condition, namely any chain of centralizers, each having infinite index in its predecessor, stabilizes after finitely many steps. We want to study group for which any definable quotient
satisfies this chain condition which we call $\M$-groups. As mention above, examples are definable groups in any simple theory, such as simple pseudo-finite groups and groups definable in the theory of perfect bounded PAC-fields, as well as groups definable in rosy theories. 
A useful notion in this context is the FC-centralizer of a subgroup: For a subgroup $H$ of a group $G$, the FC-centralizer contains all elements whose centralizer has finite index in $H$. These subgroups were introduced by Haimo in \cite{Hai}. Defining a suitable notion of these objects regarding $A$-invariant subgroups of $G$ as well as the notion of  an \emph{almost commutator}, another useful object to study $\M$-groups, and establishing their basic properties is the main part of Section \ref{Sec_ACAC}.

In Section \ref{sec_Mcgroups} we start working in $\M$-groups. One of its crucial properties is that the iterated FC-centralizer of any subgroup is definable. In particular, this allows us to have the same connection between almost commutator and almost centralizers as in the ordinary case. Additionally, we show that any abelian, solvable or nilpotent subgroup of an $\M$-group is almost contained (up to finite index) in a definable subgroup with the same property generalizing these results from groups definable in simple theories by Milliet \cite{cm_defenv} to this more general context.

Afterward, we analyze the Fitting subgroup of an $\M$-group   (Section \ref{Sec_Fit}). For a group $G$, the Fitting subgroup is the group generated by all normal nilpotent subgroups of $G$. Note that this subgroup is always normal in $G$ but might not be nilpotent. For $\Mn$-groups, nilpotency of the Fitting subgroup was shown by Bryant \cite{Bry} for periodic groups, by Poizat and Wagner \cite{PoiWag} in the stable case and in general by Derakhshan  and Wagner \cite{DerWag}. Furthermore, it has been recently generalized by Palac\'in and Wagner \cite{PalWag} to groups type-definable in simple theories. One of the main ingredient other than the chain condition on centralizers, is that any nilpotent subgroup has a definable envelop up to finite. As mentioned before, we establish this  result for $\M$-groups in Section \ref{sec_Mcgroups} which enables us to prove nilpotency of the Fitting subgroup for $\M$-groups.

In the last section, we study subgroups of $\M$-groups which are almost nilpotent or FC-nilpotent, a notion which was also introduced by Haimo in \cite{Hai}: a subgroup $H$ of $G$ is FC-nilpotent, if there is a sequence  $\{1\}= H_0 < H_1 < \dots < H_n = H$ of normal nilpotent subgroups of $G$ such that $H_{i+1} / H_i$ is in the FC-center of $G/H_i$. Using the new notion of almost commutators and its properties in $\M$-groups, we are able to generalize 
the nilpotency criteria of Hall ($G$ is nilpotent if one can find a nilpotent subgroup $N$ such that $G$ modulo the derived subgroup of $N$ is nilpotent as well) to ind-definable almost nilpotent subgroups of $\M$-groups.

\begin{nota}
For two subgroups $H$ and $N$ of $G$ such that $H$ normalizes $N$ and $g$ an element of $G$, we let $C_H(\{gN\})$ the subgroup of $H$ which contains all elements $h$ in $H$ such that $h g \cdot N = g h \cdot N$. Moreover, we denote by $H/N$ the quotient group $H/(H\cap N)$.
\end{nota}

\section{Almost centralizers and commutators}\label{Sec_ACAC}

For this section we fix a group $G$ which is not necessary a pure group, a parameter set $A$ and assume that $G$ is \emph{$|A|$-saturated} and \emph{$|A|$-strongly homogeneous}. We want to study subgroups of $G$ which are stabilized by all automorphisms which fix point-wise $A$. From the model theoretic point of view, as $G$ is sufficiently homogeneous, this corresponds to subgroups which are set-wise a union of realizations of a family of types over $A$. Hence an $A$-invariant subgroup $H$ of $G$, as a set, has a canonical interpretation in any elementary extension $\G$ of $G$ (the set of realization in $\G$ of the family of types which define $H$). By saturation, this set forms again a subgroup of $\G$ and we denote it by $H(\G)$.

\begin{remark}
Let $H$ and $K$ be two $A$-invariant subgroups of $G$ and $\G$ be an elementary extension of $G$. By saturation and homogeneity, the normalizer of $H$ in $K$ is the trace of the normalizer of $H(\G)$ in $K(\G)$. Moreover, if $H$ is normalized by $K$, the group $H(\G)$ is normalized by $K(\G)$.
\end{remark}


\subsection{Preliminaries} \

The following subgroups were introduced by Haimo in \cite{Hai}.

\begin{defn}
Let $H$ be a group and $L$, $K$ and $N$ be three subgroup of $H$ such that $N$ is normalized by $L$. We define:
\begin{itemize}
\item The \emph{FC-centralizer} of $L$ in $K$ modulo $N$:
$$\FC_K(L/N) = \{ g \in N_K(N): [L: C_L(g/ N)] \mbox{ is finite} \}$$
\item The \emph{FC-center} of $H$:
$$\FC(H) = \FC _H(H)$$
\item The \emph{$n$th  FC-center} of $H$ inductively on $n$ as the following:
$$\FC^0(H) = \{1\} \mbox{ and } \FC^{n+1}(H)= \FC_H(H/\FC(H)_{n})$$
\item The \emph{FC-normalizer} of $H$ in $G$:
$$\FN_H(L)= \{ g \in H:  [L^g : L\cap L^g] \mbox{ and }[ L :L\cap L^g] \mbox{ are finite} \}$$
%
%
\end{itemize}
\end{defn}

We generalize this notion to a suitable notion of these objects regarding $A$-invariant subgroups of $G$. For two such groups, we have two options regarding the index of one in the other: it is either \emph{bounded}, i.\ e.\ it does not grow bigger than a certain cardinal while enlarging the ambient model, or for any given cardinal $\kappa$ we can find an ambient model such that the index is larger than $\kappa$. Then we say that the index is \emph{unbounded}. This leads to the below definition.

\begin{defn}
Let $H$ and $K$ be two $A$-invariant subgroups. We say that $H$ is \emph{almost contained} in $K$, denoted by $H \ls K$ if the index of $H\cap K$ is bounded in $H$. We say that $H$ and $K$ are \emph{commensurate}, denoted by $H \sim K$ if $H$ is almost contained in $K$ and $K$ is almost contained in $H$.

In the case of arbitrary subgroup $H$ and $K$, we say that $H$ is \emph{almost contained} in $K$, also denoted by $H \ls K$ if the index of $H\cap K$ is finite in $H$. We say that $H$ and $K$ are \emph{commensurable}, denoted by $H \sim K$ if $H$ is almost contained in $K$ and $K$ is almost contained in $H$.
\end{defn}

Observe that being almost contained is a transitive relation and being commensurate is an equivalence relation among ($A$-invariant) subgroups of $G$ and that we have the following property:

\begin{lemma}\label{Lem_ProdCon}
Let $H$, $K$, and $L$ be three $A$-invariant subgroups of $G$ such that $H$ normalizes $K$. If $H\ls L$ and $K \ls L$ then $HK \ls L$.
\end{lemma}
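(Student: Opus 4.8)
The plan is to reduce the statement to a single purely group-theoretic index inequality, namely
\[
[HK : HK \cap L] \;\le\; [H : H\cap L]\cdot[K : K\cap L],
\]
and then read off the conclusion from the definition of $\ls$. Observe first that $HK$ is genuinely a subgroup, since $H$ normalizes $K$, and that it is $A$-invariant; by the saturation and homogeneity set up at the start of the section, passing to an elementary extension $\G$ commutes with forming products and intersections, so $(HK)(\G)=H(\G)K(\G)$ and $(HK\cap L)(\G)=H(\G)K(\G)\cap L(\G)$. Hence it suffices to establish the displayed inequality abstractly in an arbitrary group: applied inside every $\G$ it bounds $[(HK)(\G):(HK)(\G)\cap L(\G)]$ by the product of the two indices $[H(\G):H(\G)\cap L(\G)]$ and $[K(\G):K(\G)\cap L(\G)]$, which are bounded by the hypotheses $H\ls L$ and $K\ls L$; boundedness of the product then gives $HK\ls L$. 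The same argument with \emph{finite} in place of \emph{bounded} handles the version for arbitrary (not necessarily $A$-invariant) subgroups.

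For the index inequality, write $N=HK$ and note $K\trianglelefteq N$. Since $K$ is normal in $N$ and $L\cap N\le N$, the set $(L\cap N)K$ is a subgroup of $N$, and I factor
\[
[N:L\cap N]=[N:(L\cap N)K]\cdot[(L\cap N)K:L\cap N].
\]
By the second isomorphism theorem the right-hand factor equals $[K:K\cap(L\cap N)]=[K:K\cap L]$, the last equality because $K\subseteq N$. For the left-hand factor, the inclusion $H\cap L\subseteq L\cap N$ gives $(H\cap L)K\le (L\cap N)K$, whence $[N:(L\cap N)K]\le[N:(H\cap L)K]$; and covering $N=HK$ by the translates $h_i(H\cap L)K$, where the $h_i$ run over a transversal of $H\cap L$ in $H$, yields $[N:(H\cap L)K]\le[H:H\cap L]$. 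Combining the two factors gives the inequality.

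The mathematical core is entirely elementary---the coset factorization above---so the only real point requiring attention is the model-theoretic bookkeeping in the first paragraph: one must be sure that the inequality is uniform across all elementary extensions and that the operations $H\mapsto H(\G)$, intersection and product are mutually compatible, which is exactly what the opening remark on saturation and homogeneity provides. I expect this transfer, rather than the group theory, to be the subtle step.
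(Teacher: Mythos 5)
Your proof is correct and is essentially the paper's argument: both reduce the lemma to the single index inequality $[HK:HK\cap L]\le[H:H\cap L]\cdot[K:K\cap L]$ together with transfer of boundedness, the paper verifying the inequality by noting that the product $I_H\cdot I_K$ of two transversals contains a transversal of $HK\cap L$ in $HK$, while you verify it via the tower $L\cap N\le (L\cap N)K\le N$ and the second isomorphism theorem. The only point to watch is that your identification $(HK)(\G)=H(\G)K(\G)$ is clean only for sufficiently saturated $\G$; but, exactly as in the paper's own write-up, it suffices to bound the index in such extensions, since the index computed in an arbitrary elementary extension is dominated by the index computed in any larger saturated one.
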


\proof
We assume that $G$ is sufficiently saturated. By assumption, we have that the index of $L\cap H$ in $H$ as well as the index of $L \cap K$  in $K$ are bounded by some cardinal $\kappa_H$ and $\kappa_K$ respectively which are smaller than $(2^T)^+$. Take $I_H=\{h_i : i < \kappa_H\}$ and $I_K=\{k_i : i < \kappa_K\}$ representatives of the cosets of $L\cap H$ in $H$ and respectively of $L \cap K$  in $K$. Then the set $I_H\cdot I_K$ has at most size $2^T$ and as $H$ normalizes $K$, it contains a set of representatives of the cosets of $L\cap (HK)$ in $HK$, hence the index of $L\cap (HK)$ in $HK$ is bounded in any elementary extension of $G$ and whence $HK \ls L$.
\qed


\begin{defn}
Let $H$, $K$ and $N$ be three $A$-invariant subgroup of $G$ such that $N$ is normalized by $H$. We define:
\begin{itemize}
\item The \emph{almost-centralizer} of $H$ in $K$ modulo $N$:
$$\t C_K(H/N) = \{ g \in N_K(N): H \sim C_H(g/ N) \}$$
\item The \emph{almost center} of $H$:
$$\t Z(H) = \t C_H(H)$$
\item The \emph{almost normalizer} of $H$ in $G$:
$$\t N_G(H)= \{ g \in G:  H^g \sim H\cap H^g \mbox{ and } H \sim H\cap H^g \}$$
\end{itemize}
\end{defn}

\begin{remark}
These are all $A$-invariant subgroups of $G$ and each of them contain their ordinary version, i.\ e.\ $C_G(H) \leq \t C_G(H)$, $Z(G) \leq \t Z(G)$, and $N_G(H) \leq \t N_G(H)$. Note that if $H$ is definable, bounded can be replaced by finite and these definition coincide with the definition of the FC-centralizer, FC-center and FC-normalizer of $H$. Additionally, the almost centralizer $\t C_G(H)$ of $H$ is fixed by all automorphism which fix $H$ and hence a characteristic subgroup. Inductively, this gives the following well defined definition of the iterated almost centralizers.
\end{remark}

\begin{defn}
Let $H$ be an $A$-invariant subgroup of $G$, then
\begin{itemize}
\item The \emph{$n$th almost centralizer} of $H$ in $G$ inductively on $n$ as the following:
$$\t C^0_G(H)= \{1\} \mbox{ and } \t C^{n+1} _G(H)= \t C_G(H/\t C_G(H)_{n})$$

\item The \emph{$n$th almost center} of $G$ inductively on $n$ as the following:
$$\t Z_n(G) = \t C^n_G(G) $$
\end{itemize}
\end{defn}


\subsection{Symmetry and the approximate three-subgroup-lemma}\label{sec_symtsl} \


First we introduce the notion of $A$-ind-definable subgroup which are special $A$-invariant subgroup. It is a model theoretic notion which generalize type-definable subgroups.
\begin{defn}
An $A$-\emph{ind-definable} subgroup of a group $G$ is the union of a directed system of $A$-type-definable subgroups of $G$.
\end{defn}

Observe that for two subgroups $H$ and $K$ of a group $G$, we have trivially that $H \leq C_G(K)$ if and only if $K \leq C_G(H)$. In the case of almost centralizers and almost containment,  we will see that this is not true in general. Nevertheless, we obtain the same symmetric condition replacing the centralizer by the almost centralizer and containment by almost containment for ind-definable subgroups. In the case of ind-definable subgroups of groups hyperdefinable in a simple theory, this was proven by Palacin and Wagner  in \cite{PalWag}. 

We use the following fact due to B. Neumann.

\begin{fact}\label{Fact_Neu}
A group cannot be covered by finitely many translates of subgroups of infinite index.
\end{fact}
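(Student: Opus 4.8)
The plan is to prove the stated (contrapositive) form directly: assuming $G=\bigcup_{i=1}^n g_iH_i$ is a finite cover of $G$ by left translates of subgroups $H_i\le G$, all of infinite index, I will derive a contradiction. This is the classical lemma of B.\ Neumann, and the cleanest induction is not on the number $n$ of cosets but on the number $d$ of \emph{distinct} subgroups occurring among the $H_i$. No saturation or model-theoretic input is needed; it is a purely group-theoretic fact, invoked later as a black box.

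First I would dispose of the base case: if $d=0$ there are no cosets at all, so the union is empty and cannot equal the nonempty group $G$. For the inductive step, fix one subgroup $H$ that occurs and set $C=\{i:H_i=H\}$. The cosets $\{g_iH:i\in C\}$ form a finite family of cosets of $H$; since $[G:H]$ is infinite, $H$ has infinitely many cosets, so these finitely many cannot exhaust $G$. Hence there is a coset $gH$ disjoint from every $g_iH$ with $i\in C$, and the cover then forces $gH\subseteq\bigcup_{i\notin C}g_iH_i$.

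The key manoeuvre is to propagate this single containment to \emph{every} coset of $H$ by left translation. For each $j\in C$ we have $g_jH=(g_jg^{-1})\,gH\subseteq\bigcup_{i\notin C}(g_jg^{-1}g_i)H_i$, since left multiplication preserves containments and $g_jg^{-1}\cdot gH=g_jH$. Substituting these back into the original cover replaces every coset of $H$ by cosets of the remaining subgroups, yielding a new finite cover $G=\bigcup_{j\in C}\bigcup_{i\notin C}(g_jg^{-1}g_i)H_i\;\cup\;\bigcup_{i\notin C}g_iH_i$ in which $H$ no longer appears. (If $C$ exhausts all indices, i.e.\ $H$ is the only subgroup, then $\bigcup_{i\notin C}$ is empty and $gH\subseteq\emptyset$ is already the desired contradiction.) Every subgroup still occurring is among the original $H_i$ with $i\notin C$, hence still of infinite index, and at most $d-1$ distinct ones remain; by the induction hypothesis such a cover is impossible, which finishes the argument.

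The only genuinely delicate point is the choice of inductive measure. The retranslation step can drastically \emph{increase} the number of cosets, so an induction on $n$ would fail outright; the whole argument hinges on the observation that it strictly \emph{decreases} the number of distinct subgroups while keeping every subgroup of infinite index. Everything else reduces to elementary coset bookkeeping: that distinct cosets of a fixed subgroup are disjoint, and that left multiplication carries a containment of unions into a containment of the translated unions.
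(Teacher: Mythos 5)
Your proof is correct, but there is nothing in the paper to compare it against: the statement is quoted as Fact \ref{Fact_Neu}, a classical result attributed to B.~Neumann, and the paper uses it as a black box (e.g.\ in the proofs of Theorem \ref{Thm_sym} and the subsequent lemmas) without ever proving it. What you have written is the standard group-theoretic argument, and it is sound. Your choice of inductive measure --- the number $d$ of distinct subgroups occurring among the cosets, rather than the number of cosets --- is exactly the point on which the argument hinges, since the retranslation step (replacing every coset of a chosen subgroup $H$ by translates of the cosets of the remaining subgroups) can increase the number of cosets but strictly decreases $d$. The coset bookkeeping is all handled correctly: distinct cosets of $H$ are disjoint, an infinite-index subgroup has infinitely many cosets so finitely many of them miss some coset $gH$, left translation preserves the containment $gH\subseteq\bigcup_{i\notin C}g_iH_i$, and the degenerate case where $H$ is the only subgroup occurring gives the contradiction $gH\subseteq\emptyset$ directly. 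One could add that Neumann's theorem in full strength says more (in any finite cover of a group by cosets, the cosets belonging to finite-index subgroups already cover the group), but the weaker contrapositive you prove is precisely what the paper invokes.
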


\begin{theorem}\label{Thm_sym}
Let $H$ and $K$ be two $A$-ind-definable subgroups of $G$ and let $N$ be a subgroup of $G$ which is a union of definable sets. Suppose $N$ is normalized by $H$ and $K$. Then $H\ls \t C_G(K/N)$ if and only if $K \ls \t C_G(H/N)$.
\end{theorem}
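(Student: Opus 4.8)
The statement is symmetric in $H$ and $K$, and the defining relation is itself symmetric because $[k,h]=[h,k]^{-1}$, so that $[h,k]\in N$ if and only if $[k,h]\in N$; hence it suffices to prove the forward implication, assuming $H\ls\t C_G(K/N)$ and deriving $K\ls\t C_G(H/N)$. I would first fix the combinatorial picture. Since $H$ and $K$ normalize $N$, for $h\in H$ and $k\in K$ the set $C_K(h/N)=\{k\in K:[h,k]\in N\}$ is a subgroup of $K$, the set $C_H(k/N)$ a subgroup of $H$, and $h\in\t C_G(K/N)$ precisely when $[K:C_K(h/N)]$ is bounded. Thus the hypothesis says exactly that the subgroup $H_0:=H\cap\t C_G(K/N)$ has bounded index in $H$, i.e.\ generically an element of $H$ almost commutes with $K$ modulo $N$, and the goal is the same assertion with $H$ and $K$ interchanged. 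In terms of the symmetric relation $R(h,k)\equiv[h,k]\in N$ on $H\times K$, whose rows and columns are the subgroups $C_K(h/N)$ and $C_H(k/N)$, the hypothesis is that the rows of bounded index in $K$ form a generic subset of $H$, and the goal is that the columns of bounded index in $H$ form a generic subset of $K$.

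Second, I would remove the ind-definability and the non-uniformity. Writing $H=\bigcup_iH_i$ and $K=\bigcup_jK_j$ as directed unions of type-definable groups and pushing the relations $\ls$ and $\sim$ through the directed systems, the problem reduces by compactness to the case of definable (equivalently type-definable) $H$ and $K$, where, as noted in the remark above, bounded index coincides with finite index. In that case $H_0=\bigcup_{n<\omega}\{h\in H:[K:C_K(h/N)]\le n\}$ is an increasing union of $A$-definable sets with $[H:H_0]$ finite, so saturation yields a single $n<\omega$ and finitely many translates of $P:=\{h:[K:C_K(h/N)]\le n\}$ covering $H$. I would thus reduce to the uniform hypothesis: generically in $H$, the row $C_K(h/N)$ has index at most $n$ in $K$.

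The heart of the matter is to transfer genericity from rows to columns of $R$. The finite model for this is a one-line averaging argument: if a $(1-\varepsilon)$-fraction of the rows each cover all but a $1/n$-fraction of $K$, then $R$ is dense, so the average column is dense, whence by a Markov estimate a positive proportion of the columns $C_H(k/N)$ are dense; as these dense columns lie in the subgroup $K\cap\t C_G(H/N)$ and a subgroup of positive proportion has bounded index, the conclusion follows. In the present setting there is no counting measure, and this is exactly the point at which Fact~\ref{Fact_Neu} enters as the measure-free substitute for the averaging: arguing contrapositively, I would assume $[K:K\cap\t C_G(H/N)]$ unbounded, select sufficiently many elements $k$ whose columns $C_H(k/N)$ have infinite index in $H$, and use the uniform bound $n$ on the generic rows to cover a generic part of $H$ by finitely many cosets of these infinite-index columns, contradicting Neumann's lemma.

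The main obstacle is precisely this last step: converting the quantitative averaging into a genuine finite covering of $H$ by cosets of infinite-index subgroups to which Fact~\ref{Fact_Neu} applies. The extraspecial-group picture (with $N$ the centre, or with $N=1$) shows that the naive witness $C_K(H_0/N)=\bigcap_{h\in H_0}C_K(h/N)$ is too small, since it can have unbounded index even when the conclusion holds, so the transfer must be global rather than through a common centralizer, and the care lies in organizing the generic rows and the bad columns so that Neumann's lemma bites. By comparison, the symmetry reduction, the passage through the directed unions, and the bounded-versus-finite bookkeeping are routine.
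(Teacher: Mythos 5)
Your route is sound and genuinely different in organization from the paper's, even though both rest on Fact \ref{Fact_Neu} and saturation, and both work with the same ``column'' subgroups $C_H(k_j^{-1}k_i/N)$ attached to coset representatives $k_0,k_1,\dots$ of $K\cap\t C_G(H/N)$ in $K$. The paper argues from the failure of the conclusion, handles ind-definability by Erdos--Rado (uniformizing which type-definable piece $H_\alpha$ of $H$ witnesses unboundedness), and uses Neumann's lemma \emph{positively}: since the columns have infinite index in $H_\alpha$, the type asserting the existence of $\kappa$ many elements of $H_\alpha$ whose pairwise quotients lie in no column is consistent; realizing it, two of these elements fall in the same coset of $H_0=H\cap\t C_G(K/N)$, contradicting the hypothesis. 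You run the dual scheme: reduce to type-definable $H$, $K$ through the directed systems, use compactness to extract a uniform row bound $n$ and a piece $P$ such that finitely many translates of $P$ cover $H$, and then the pigeonhole over $k_0,\dots,k_n$ shows $P$ is covered by the finitely many columns, so that $H$ is covered by finitely many cosets of infinite-index subgroups, contradicting Neumann's lemma \emph{negatively}. Your version is closer to the finite averaging heuristic and dispenses with Erdos--Rado entirely; the paper's version never needs a uniform bound nor any definability analysis of the almost centralizer, letting the type machinery absorb both.

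The one step you should repair is the assertion that $P=\{h\in H:[K:C_K(h/N)]\le n\}$ is $A$-definable: after your reduction $K$ is only type-definable and $N$ only a union of definable sets, so $P$ is not definable as stated. It is, however, a union of $A$-definable sets: by compactness, $[K:C_K(h/N)]\le n$ holds exactly when there are a formula $\phi$ implied by the type defining $K$ and a definable piece $N_\beta$ of $N$ (close the family under finite unions) such that among any $n+1$ realizations of $\phi$, two have their quotient commuting with $h$ modulo $N_\beta$. This $\bigvee$-definable description is exactly what your saturation step needs, since the complements of the sets $P_n$ are then type-definable and the covering type you must refute is a legitimate partial type. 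Likewise, your reduction through the directed systems, which you dismiss as routine, does require the bookkeeping that $\t C_G(K/N)\subseteq\t C_G(K_j/N)$ for each piece $K_j$ of $K$, and that $\t C_G(H/N)$ is the intersection of the $\t C_G(H_i/N)$ over the pieces $H_i$ of $H$, with indices summed over the small index sets. With those two points made explicit, your proof is complete.
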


\proof
We may assume that $G$ is enough saturated, that $H$ and $K$ are ind-definable over the empty set and that $N$ is the union of $\emptyset$-definable sets. Let $\kappa$ be equal to $2^{|T|}$ and $N$ be equal to $\bigcup_{\beta \in \Delta} N_\beta$ with $N_\beta$ definable and $\Delta$ some index set.
Suppose $H$ is almost contained in $\t C_G(K/N)$. Then $H_0$, defined as the intersection of $H$ and $\t C_G(K/N)$, has bounded index in $H$, and for all elements $h$ in $H_0$ we have that $K$ is almost contained in its centralizer, i.\ e.\
$$\forall h \in H_0: K \ls C_K(h/N) (\ast)$$
Assume towards a contradiction that $K$ is not almost contained in $\t C_G(H/N)$. Thus, there is a set of representatives $\{k_i:i \in I\}$ in $K$ with $I$ greater than $(2^{\kappa})^+$ of different cosets of $\t C_K(H/N)$ in $K$. As $H$ is the union of type-definable subgroups $H_\alpha$ with $\alpha$ in a small index set $\Omega$, for every $i$ different than $j$ in $I$, there is $\alpha_{(i,j)}$ in $\Omega$ such that the centralizer of the element $k^{-1}_i k_j/N$ has unbounded index in $H_{\alpha_{(i,j)}}$. By Erdos-Rado, we can find a subset $I_0$ of $(2^{\kappa})^+$ of cardinality $\kappa^+$  and $\alpha$ in $\Omega$ such that for all indexes of the set $\{(i,j): i \not= j, i,j \in I_0\}$, we have that $\alpha_{(i,j)}$ is equal to $\alpha$. Now, the centralizers of any element in $\{ k^{-1}_i k_j/ N : i \not = j, i,j \in I_0\}$ has infinite index in the subgroup $H_\alpha$. Hence, by Fact \ref{Fact_Neu}, $H_\alpha$ can not be covered by finitely many translates of a finite union of these centralizers. So the partial type below is consistent.
$$\pi(x_n)_{ n \in \kappa} = \{ [x_n^{-1} x_m,  k^{-1}_i k_j]\not \in N_\beta: n\not = m \in \kappa, i\not = j \in I_0\}_{ \beta \in \Delta} \cup \{ x_n \in H_\alpha \}_{n\in \kappa}$$
As $G$ is sufficiently saturated, one can find a tuple $\bar h$ in $G$ which satisfies $\pi(\bar x)$. Fix two different elements $n$ and $m$ in $\kappa$. Then, we have that $k^{-1}_i k_j \not \in C_K (h_n^{-1} h_m/N )$ for all $i$ different than $j$. Thus, $C_K (h_n^{-1} h_m/ N )$ has unbounded index in $K$ witnessed by $( k_j : j \in I_0)$. As the group $H_0$ which was defined in the beginning of the proof has bounded index in $H$ and $H_\alpha$ is a subgroup of $H$, there is some tuple $(n,m)$ as above for which the element $h_n^{-1} h_m$ is in $H_0$ which contradicts $(\ast)$.
%
%
%
\qed



\begin{cor}
Let $H$ and $K$ be two definable subgroup of $G$. Then we have that
$$[H: \t C_H (K)] < \omega \mbox{ if and only if } [K: \t C_K (H)] < \omega$$
\end{cor}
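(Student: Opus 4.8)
The plan is to derive this corollary directly from Theorem \ref{Thm_sym} by specializing to the case where $H$ and $K$ are definable, $N = \{1\}$, and the two ind-definable groups in the theorem are taken to be $H$ and $K$ themselves (a definable group is in particular type-definable, hence a trivial directed union, hence ind-definable). The key observation is a translation between the two notions of index appearing in the statements. Theorem \ref{Thm_sym} is phrased in terms of \emph{almost containment} $\ls$, whose definition for $A$-invariant subgroups involves \emph{bounded} index, whereas the corollary speaks of \emph{finite} index. So the main bridge I need is: for definable subgroups, bounded index is the same as finite index.

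**First** I would make that bridge precise. If $H$ and $K$ are definable and $K \ls \t C_G(H)$ in the sense of the theorem, this means $K \cap \t C_G(H)$ has bounded index in $K$; but by the remark following the definition of the almost centralizer, when $H$ is definable $\t C_G(H)$ coincides with $\FC_G(H)$ and is itself definable, so $K \cap \t C_G(H) = \t C_K(H)$ is a definable subgroup of the definable group $K$. A definable subgroup of bounded index in a definable group must have finite index (an infinite-index definable subgroup has unbounded index by compactness/saturation). Hence "$K \ls \t C_G(H)$" is equivalent to "$[K : \t C_K(H)] < \omega$", and symmetrically for $H$ and $K$ interchanged.

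**Then** the corollary follows immediately: by the equivalence just established, $[H : \t C_H(K)] < \omega$ is equivalent to $H \ls \t C_G(K)$, which by Theorem \ref{Thm_sym} (with $N$ trivial) is equivalent to $K \ls \t C_G(H)$, which in turn is equivalent to $[K : \t C_K(H)] < \omega$. I should check that $\t C_H(K) = H \cap \t C_G(K)$ and $\t C_K(H) = K \cap \t C_G(H)$, which is immediate from the definition of the almost centralizer once the normalization hypothesis is trivial (here $N = \{1\}$ is normalized by everything, so both groups lie in $N_G(N) = G$).

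**The main obstacle** I anticipate is purely bookkeeping rather than conceptual: verifying carefully that the hypotheses of Theorem \ref{Thm_sym} are met — namely that definable groups qualify as $A$-ind-definable and that $N = \{1\}$ is trivially a union of definable sets normalized by both $H$ and $K$ — and ensuring the bounded-versus-finite index dictionary is applied in the correct direction on each side. There is no genuine difficulty beyond confirming that for a definable subgroup, unbounded index and infinite index coincide, which is a standard saturation argument.
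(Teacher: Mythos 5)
Your overall two-step strategy matches the paper's: invoke Theorem \ref{Thm_sym} (with $N$ trivial) to get the equivalence at the level of \emph{bounded} index, then bridge from bounded to finite. But your bridge contains a genuine gap: you assert that, since $H$ is definable, $\t C_G(H)$ coincides with $\FC_G(H)$ \emph{and is itself definable}, so that $\t C_K(H)$ is a definable subgroup of the definable group $K$ and the standard fact ``a definable subgroup of bounded index has finite index'' applies. The first half of that assertion is exactly what the remark after the definition gives (when $H$ is definable, each $C_H(g)$ is a definable subgroup of $H$, so bounded index in $H$ equals finite index, whence $\t C_G(H)=\FC_G(H)$); the second half is false in the present setting. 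This corollary lives in Section \ref{Sec_ACAC}, where $G$ is an arbitrary saturated group: definability of almost centralizers is precisely the special feature of $\M$-groups established only later (Proposition \ref{prop_FCdef}), and its proof genuinely uses the chain condition. In general one only has
$$\t C_K(H)=\bigcup_{d\in\omega}\{k\in K : [H:C_H(k)]<d\},$$
a countable increasing union of definable sets, which need not stabilize, so the compactness argument for \emph{definable} subgroups of bounded index cannot be applied to it as you state.

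The gap is repairable, and the repair is what the paper does: write $\t C_H(K)=\bigcup_{d\in\omega} H_d$ with $H_d=\{h\in H:[K:C_K(h)]<d\}$ definable, and suppose $[H:\t C_H(K)]$ is infinite. Infinitely many coset representatives of $\t C_H(K)$ in $H$ then witness that every finite fragment of the type $\pi(x_i : i\in\kappa)=\{x_i\in H\}\cup\{x_i^{-1}x_j\notin H_d : i\neq j,\ d\in\omega\}$ is satisfiable (a representative system avoids the whole union, hence every $H_d$), so by saturation the full type is realized in an elementary extension, making the index unbounded. Contrapositively, bounded index implies finite index for this ind-definable subgroup, which is the statement your argument actually needs. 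So you identified the correct structure, but the crucial bounded-to-finite step cannot be delegated to definability of the almost centralizer; it requires this compactness argument on the definable approximations $H_d$.
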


\proof
By the previous theorem we have that $[H: \t C_H (K)]$ is bounded  if and only if $ [K: \t C_K (H)]$ is bounded. Thus it is enough to show that in these cases bounded index implies finite index. 

As $K$ and $H$ are definable we have that  $\t C_H (K)$ is the union of definable sets, namely:
$$ \t C_H (K) = \bigcup_{i \in \omega} H_d$$
where
$$H_d = \{ h \in H : [K : C_K(h) ] < d\}.$$
Suppose that the index $[H: \t C_H (K)]$  is at least countable. Then we have that for any cardinal $\kappa$ the following type is consistent:
$$\pi( x_i : i \in \kappa) = \{x_i \in H\} \cup \{ x_i^{-1}x_j \not\in H_d : i \neq j, d \in \omega\}$$
This contradicts that the index is bounded and thus yields the result.
\qed

In the general context, we may asked if symmetry holds for FC-centralizers. We will give a positive answer in the case that the ambient group satisfies the following chain condition on centralizer:
\vspace{8pt}
$$\emph{For every sequence $(a_i: i \in \omega)$ in $H$ there exists a natural number $n$ such that}$$
\vspace{-18pt} $$\emph{$\bigcap_{i=0}^n C_G(a_i)$ is contained in $C_G(a_j)$ for all natural numbers $j$.}$$

These groups are called \emph{$\Mn$-group}. Afterwards, we give a counter-example which shows that it does not hold in general.
\begin{prop}\label{Prop_symMc}
Let $H$ be an $\Mn$-group and $K$, $L$ and $N$ be subgroups of $H$ such that $N$ is normalized by $K$ and by $L$. Then $K\ls  \FC_H(L/N)$ if and only if $L \ls \FC_H(K/N)$.
\end{prop}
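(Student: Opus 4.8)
The statement is symmetric in $K$ and $L$, so it suffices to prove one implication, say $K \ls \FC_H(L/N) \Rightarrow L \ls \FC_H(K/N)$. Assume $K \ls \FC_H(L/N)$ and set $K_0 = K \cap \FC_H(L/N)$, a finite-index subgroup of $K$. By definition every $g \in K_0$ lies in $N_H(N)$ and satisfies that $C_L(g/N) = \{h \in L : [h,g] \in N\}$ has finite index in $L$. The plan is to produce a finite-index subgroup of $L$ almost contained in $\FC_H(K/N)$; the natural candidate is $L^\ast := \bigcap_{g \in K_0} C_L(g/N) = C_L(K_0/N)$, the elements of $L$ commuting with every element of $K_0$ modulo $N$.

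The heart of the argument is to show that $L^\ast$ has finite index in $L$, and this is exactly where the chain condition is used. First I would verify that $\FC_H(L/N)$ and $\FC_H(K/N)$ are genuinely subgroups, using that $K$ and $L$ normalize $N$ so that $C_L(gg'/N) \supseteq C_L(g/N) \cap C_L(g'/N)$ and $C_L(g^{-1}/N) = C_L(g/N)$ for $g,g'$ normalizing $N$. Then, were $L^\ast$ of infinite index, one could build a sequence $(g_i)_{i<\omega}$ in $K_0$ along which the descending chain $\bigcap_{i \le n} C_L(g_i/N)$ strictly decreases by infinite-index steps infinitely often, contradicting $\Mn$. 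Concretely, $\Mn$ yields a finite subset $K_1 \subseteq K_0$ with $C_L(K_1/N) = C_L(K_0/N) = L^\ast$; since each $C_L(g/N)$ with $g \in K_1$ has finite index in $L$ and $K_1$ is finite, $L^\ast$ is a finite intersection of finite-index subgroups, hence of finite index in $L$.

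Finally, every $h \in L^\ast$ lies in $L \subseteq N_H(N)$ and satisfies $[h,g] \in N$ for all $g \in K_0$, so $C_K(h/N) \supseteq K_0$ has finite index in $K$, i.e.\ $h \in \FC_H(K/N)$. Thus $L^\ast \subseteq L \cap \FC_H(K/N)$ and, $L^\ast$ being of finite index in $L$, we conclude $L \ls \FC_H(K/N)$. The symmetric implication follows by exchanging the roles of $K$ and $L$.

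I expect the main obstacle to be the legitimate invocation of $\Mn$ for centralizers taken modulo $N$: the condition as stated concerns honest centralizers $C_H(a)$, whereas $C_L(g/N)$ is only a centralizer modulo $N$. To bridge this I would pass to a sufficiently saturated elementary extension $\G$ of $H$, where $N$ becomes a union of $\emptyset$-definable sets and ``commuting modulo $N$'' is a partial type, and then invoke the Neumann/Erd\H{o}s--Rado machinery of Theorem \ref{Thm_sym} to turn an infinite-index failure into an unbounded one. The chain condition $\Mn$ is precisely what rules out bounded-but-infinite index and forces the stabilization above, upgrading bounded almost containment in $\t C_\G$ to finite-index containment in $\FC_H$. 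This bounded-to-finite conversion, mirroring the corollary following Theorem \ref{Thm_sym}, is the delicate point.
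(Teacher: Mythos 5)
Your proof is correct and is essentially the paper's own argument: intersect $K$ with $\FC_H(L/N)$ to get a finite-index subgroup $K_0$, use the $\Mn$ chain condition to replace $C_L(K_0/N)$ by a finite subintersection of the $C_L(g/N)$, $g\in K_0$, each of finite index in $L$, and then note that this finite-index subgroup of $L$ centralizes $K_0$ modulo $N$ and hence lies in $\FC_H(K/N)$. The only divergence is your closing paragraph: the paper disposes of the modulo-$N$ issue simply by assuming $N$ trivial at the outset (``we may assume that $N$ is trivial''), and the saturation/Erdos--Rado detour you suggest is unnecessary and indeed unavailable here, since $H$ is an abstract $\Mn$-group rather than a saturated structure and all indices involved are finite, not merely bounded.
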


\proof
We may assume that $N$ is trivial. Suppose that $K\ls \FC_H(L)$. Then, the group $K'$ defined as the intersection of $K$ with $ \FC_H(L)$ is obviously contained in $ \FC_H(L)$ and has finite index in $K$. Note that by the latter the FC-centralizer of $K'$ in $H$ is equal to the one of $K$ in $H$. Since $H$ is an $\Mn$-group, we can find elements $k_0, \dots, k_n$ in $K'$ such that $C_L(K')$ is equal to the intersection of the centralizers of the $k_i$'s. As each $k_i$ is contained in the FC-centralizer of $L$ in $H$, this intersection and hence $C_L(K')$ has finite index in $L$. In other words, $L$ is almost contained in $ C_H(K')$. The latter group is trivially contained in $\FC_H(K')$ which  coincides with $\FC_H(K)$ as mentioned before. This finishes the proof.
\qed

The next example was suggested by F. Wagner.
\begin{expl}
Let $G$ be a finite non-commutative group, $K$ be $\prod_\omega G$ and $H$ be $\bigoplus_\omega G$. By the support of an element $(k_i: i \in \omega)$ in $K$ we mean the set of indexes $i \in \omega$ such that $k_i$ is non trivial. As any element of $H$ has finite support and $G$ is finite,  and element of $H$ has finitely many conjugates, namely at most $|G| \cdot |\supp(h)|$ many.  Thus its centralizer has finite index in $K$. Hence $H$ is contained in the FC-centralizer of $K$. On the other hand, fix an element $g$ of $G$ which is not contained in the center of $G$. Let $\bar k_0$ be the neutral element of $K$ and for $n\geq 1$ we define:

$\bar k_n = (k_i)_{i \in \omega}\mbox{ such that }\begin{cases} k_i = g &\mbox{if } i \equiv 0  \pmod{n} \\
k_i =1 & \mbox{else }  \end{cases}$

For any two different natural numbers $n$ and $m$, we have that the element $\bar k_n^{-1}\bar k_m$ is a sequence of the neutral element of $G$ and infinitely many $g$'s or $g^{-1}$'s. Now, we can choose an element $h$ in $G$ that does not commute with $g$ and for any $j$ in the support of $\bar k_n^{-1}\bar k_m$ we define the following elements of $H$:

$\bar l_j = (l_i)_{i \in \omega}\mbox{ such that }\begin{cases} l_i = h &\mbox{if }i=j \\
l_i =1 & \mbox{else }  \end{cases}$

These elements witness that the set of conjugates $(\bar k_n^{-1}\bar k_m)^H$, with $n$ different than $m$, is infinite and hence the $\bar k_n$'s  are representatives of different cosets of $\t C_K(H)$ in $K$. Thus $K$ is not almost contained in the FC-centralizer of $H$ in $K$ which contradicts symmetry.
\end{expl}

For subgroups $H$, $K$, $L$ and $M$ of some group we have that
$$[H, K, L] \leq M \mbox{ and }[K, L, H] \leq M  \mbox{ imply } [L, H, K] \leq M ,$$
which is known as the three subgroup lemma. We want to generalize this result to our framework. As we have not yet introduce an approximated version of the commutator, observe that, if $H$, $K$, and $L$ normalize each other, $[H, K, L] =1$ if and only if $H \leq C_G(K/ C_G(L))$. Thus we may translate this result as stated below.
$$H \leq C_G(K/ C_G(L)) \mbox{ and }K \leq  C_G(L/ C_G(H)) \mbox{ imply } L \leq  C_G(H/ C_G(K)).$$
%
This statement, replacing all centralizers and containment by almost centralizers and almost containment, can be deduced from the lemma proven below in the case of  ind-definable subgroups.

\begin{lemma}
Let $H$, $K$ and $L$ be three $A$-ind-definable subgroups of $G$ such that $L$ is normalized by $K$. Then the following is equivalent:
\begin{itemize}
\item $H \not\ls \t C_G(K/\t C_G(L))$;
\item for any cardinal $\kappa$, there exists an extension $\mathcal G$ of $G$ and elements $(h_i : i \in \kappa)$ in $H(\mathcal G)$, $(k_n: n \in \kappa)$ in $K(\mathcal G)$ and $(l_s : s\in \kappa)$ in $L(\mathcal G)$ such that
$$ [[h_i^{-1}h_j,k_n^{-1}k_m],l_s^{-1}l_t] \not = 1\ \ \ \forall  i,j,n,m,s,t\in \kappa,\ i \not= j, n \not= m, s \not=t.$$
\end{itemize}
\end{lemma}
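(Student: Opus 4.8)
The plan is to prove the equivalence by showing the contrapositive-style correspondence between the failure of almost containment and the consistency of a certain partial type. I would first unwind the definitions. Recall that $H \not\ls \t C_G(K/\t C_G(L))$ means that the index of $H \cap \t C_G(K/\t C_G(L))$ in $H$ is unbounded. By definition of the almost centralizer, an element $h \in H$ lies in $\t C_G(K/\t C_G(L))$ precisely when $K \sim C_K(h/\t C_G(L))$, i.e.\ $K$ is commensurate with the set of $k \in K$ satisfying $[h,k] \in \t C_G(L)$. So the negation says: there is an unboundedly large set of representatives $(h_i)$ in $H$ of distinct cosets, and for each pair $i \neq j$ the element $h_i^{-1}h_j$ witnesses that $C_K(h_i^{-1}h_j/\t C_G(L))$ has unbounded index in $K$. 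Iterating one level deeper, $[h_i^{-1}h_j, k] \notin \t C_G(L)$ means $L \not\ls C_L([h_i^{-1}h_j,k])$, which unboundedly-often produces $l$ with $[[h_i^{-1}h_j,k],l] \neq 1$.

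\textbf{The forward direction.} Assuming the first bullet, I would extract the three indexed families by a successive type-compactness argument, exactly mirroring the structure in the proof of Theorem~\ref{Thm_sym}. Starting from the unbounded index of $H \cap \t C_G(K/\t C_G(L))$ in $H$, choose $(h_i : i \in \lambda)$ with $\lambda$ very large. For each pair, $C_K(h_i^{-1}h_j/\t C_G(L))$ has unbounded index in $K$; since $H$ and $K$ are ind-definable, write them as directed unions of type-definable pieces and apply Erd\H{o}s--Rado to stabilize the relevant index $\alpha$ on a large monochromatic subset, so that on the corresponding type-definable subgroup $K_\alpha$ none of the relevant centralizers has bounded index. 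Fact~\ref{Fact_Neu} then guarantees that $K_\alpha$ cannot be covered by finitely many translates of finitely many of these centralizers, which makes the partial type asserting $[h_i^{-1}h_j, k_n^{-1}k_m] \notin \t C_G(L)$ consistent; saturation yields the $(k_n)$. Finally the same Neumann--Erd\H{o}s--Rado mechanism, applied inside $L$ to the condition $[[h_i^{-1}h_j,k_n^{-1}k_m], l] \neq 1$ for appropriately many $l$, produces the $(l_s)$ and the nontriviality of the iterated commutator in a sufficiently saturated extension $\mathcal G$. The converse direction is routine: given such families in some $\mathcal G$ for every $\kappa$, one reads off that the relevant indices are unbounded and hence $H \not\ls \t C_G(K/\t C_G(L))$ by definition of boundedness.

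\textbf{The main obstacle.} The hard part will be managing the three nested levels of the combinatorial extraction simultaneously, since at each level the relevant ``centralizer'' is taken modulo an invariant but not necessarily definable subgroup ($\t C_G(L)$ is only $A$-invariant, and membership in it is itself an unbounded-index condition rather than a closed definable one). Passing the type-definability through the almost-centralizer modulo $\t C_G(L)$ requires expressing the conditions $[x,y] \notin \t C_G(L)$ in terms of the open condition $L \not\ls C_L([x,y])$, which is where the innermost application of Fact~\ref{Fact_Neu} must be threaded into the consistency of the type; one must be careful that the Erd\H{o}s--Rado stabilization is compatible across all three ind-definable presentations at once, so that a single type in variables $(x_n), (y_n), (z_n)$ ranging over fixed type-definable pieces $H_\alpha, K_\beta, L_\gamma$ captures all the nontriviality requirements. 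I would organize this as a single partial type whose consistency follows by compactness from the finite-cover failure supplied by Neumann's fact at each level, rather than extracting the families one after another, to avoid having to iterate the saturation hypothesis.
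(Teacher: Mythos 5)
Your proposal is correct and follows essentially the same route as the paper's proof: coset representatives witnessing the unbounded index, Erd\H{o}s--Rado to uniformize the type-definable pieces of the three ind-definable groups, Fact \ref{Fact_Neu} to obtain consistency of the partial types, saturation to realize them, and a nested pigeonhole argument for the converse. The only real difference is organizational: by merging the extractions into a single partial type whose only conditions beyond membership in fixed pieces are the definable inequations $[[x_i^{-1}x_j,y_n^{-1}y_m],z_s^{-1}z_t]\neq 1$, you can bypass the paper's intermediate device of writing $\t C_G(L)$ as a union of definable sets $S_{\phi,d}$ (which is available only after its Claim reduces $L$ to a single type-definable piece), although you still need exactly that Erd\H{o}s--Rado reduction to fix the pieces $H_\alpha$, $K_\beta$, $L_\gamma$ over which your type's variables range.
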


\proof
We suppose that $G$ is sufficiently saturated, that $H$, $K$, and $L$ are ind-definable over the empty set and that $H \not\ls \t C_G(K/\t C_G(L))$. As $L$ is ind-definable, it is equal to a bounded union of type-definable subgroups $L_\alpha$ with $\alpha$ in some small index set  $\Omega_L$.

\begin{cla}
$H \not\ls \t C_G(K/\t C_G(L))$ if and only if there exists some $\alpha$ in $\Omega_L$ such that $H \not\ls \t C_G(K/\t C_G(L_\alpha))$.
\end{cla}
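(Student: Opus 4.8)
The plan is to treat the two directions separately: the reverse implication is immediate from monotonicity of the almost centralizer in its modulus, while the forward one rests on the identity $\t C_G(K/\t C_G(L))=\bigcap_{\alpha\in\Omega_L}\t C_G(K/\t C_G(L_\alpha))$, which in turn comes from $\t C_G(L)=\bigcap_{\alpha}\t C_G(L_\alpha)$.

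First I would record the basic monotonicity. Since each $L_\alpha\leq L$, any element almost centralizing $L$ almost centralizes $L_\alpha$, so $\t C_G(L)\leq \t C_G(L_\alpha)$ for every $\alpha$; and enlarging the modulus enlarges the almost centralizer, because $N\subseteq N'$ forces $C_K(g/N)\subseteq C_K(g/N')$, so a bounded index stays bounded. Hence $\t C_G(K/\t C_G(L))\leq \t C_G(K/\t C_G(L_\alpha))$. This settles the implication from right to left: if $H\not\ls \t C_G(K/\t C_G(L_\alpha))$ for some $\alpha$, then $H$ already has unbounded index in the larger group $\t C_G(K/\t C_G(L_\alpha))$, hence even more so in the smaller group $\t C_G(K/\t C_G(L))$, giving $H\not\ls \t C_G(K/\t C_G(L))$.

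For the forward implication I would argue by contraposition, assuming $H\ls \t C_G(K/\t C_G(L_\alpha))$ for every $\alpha$ and deducing $H\ls \t C_G(K/\t C_G(L))$. The core is the set equality $\t C_G(K/\t C_G(L))=\bigcap_{\alpha}\t C_G(K/\t C_G(L_\alpha))$, whose inclusion $\subseteq$ is the monotonicity above. For $\supseteq$ I would first prove $\t C_G(L)=\bigcap_\alpha \t C_G(L_\alpha)$: if $g$ almost centralizes each $L_\alpha$, say $[L_\alpha:C_{L_\alpha}(g)]\leq\lambda_\alpha$, then writing $L=\bigcup_\alpha L_\alpha$ and $C_L(g)=\bigcup_\alpha C_{L_\alpha}(g)$ as directed unions, every coset of $C_L(g)$ in $L$ is already represented inside some $L_\alpha$, so $[L:C_L(g)]\leq |\Omega_L|\cdot\sup_\alpha\lambda_\alpha$; as $\Omega_L$ is small and each $\lambda_\alpha$ is a fixed cardinal, this sum and supremum are again bounded, so $g\in\t C_G(L)$. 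Granting this, for $g$ in the intersection on the right one gets $C_K(g/\t C_G(L))=\bigcap_\alpha C_K(g/\t C_G(L_\alpha))$, an intersection of boundedly many subgroups of $K$ each of bounded index; since the index of such an intersection is at most the product of the individual indices, it is still bounded, so $g\in\t C_G(K/\t C_G(L))$. Combining, each $\t C_G(K/\t C_G(L_\alpha))$ almost contains $H$ with bounded index, there are only $|\Omega_L|$ many of them, so their intersection $\t C_G(K/\t C_G(L))$ almost contains $H$ as well.

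The point demanding care, and where I expect the bookkeeping to sit, is the normalizer clause built into the almost centralizer: membership in $\t C_G(K/N)$ requires $g\in N_G(N)$. The intersection behaves well in one direction — an element normalizing every $\t C_G(L_\alpha)$ normalizes their intersection $\t C_G(L)$ — but one must also confirm the converse alignment and that moving between $\t C_G(L)$ and the $\t C_G(L_\alpha)$ preserves these conditions; since every group in sight is $A$-invariant, this is routine, if slightly tedious. The genuine conceptual obstacle, by contrast, is the \emph{localization to a single piece} $L_\alpha$: everything reduces to the fact that bounded index is preserved under the sums, suprema, and products indexed by the small set $\Omega_L$ coming from the directed union $L=\bigcup_\alpha L_\alpha$, which is precisely what the two index estimates above exploit.
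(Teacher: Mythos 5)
Your proof is correct (at the paper's own level of rigor) but takes a genuinely different route on the substantive, left-to-right direction. The paper argues directly: assuming $H \not\ls \t C_G(K/\t C_G(L))$, it passes to a saturated extension, extracts a long sequence of coset representatives $h_i$ and, for each pair $(i,j)$, a long sequence of witnesses $k^{(i,j)}_n$ with $[h_i^{-1}h_j,(k^{(i,j)}_n)^{-1}k^{(i,j)}_m]\notin \t C_G(L)$, and then applies Erd\H{o}s--Rado twice (once in $(n,m)$, once in $(i,j)$) to uniformize the index $\alpha$ supplied by the identity $\t C_G(L)=\bigcap_\alpha \t C_G(L_\alpha)$, ending with a single $\alpha$ and many witnesses for $H\not\ls \t C_G(K/\t C_G(L_\alpha))$. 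You instead take the contrapositive and replace this Ramsey-type uniformization by cardinal arithmetic: from the same identity you get $\bigcap_\alpha C_K(g/\t C_G(L_\alpha))\subseteq C_K(g/\t C_G(L))$, and since $\Omega_L$ is a small index set, products (and sums) of boundedly many bounded indices remain bounded; one more application of the same product bound gives $H\ls \bigcap_\alpha \t C_G(K/\t C_G(L_\alpha))\subseteq \t C_G(K/\t C_G(L))$. Both arguments hinge on the same key fact about $\t C_G(L)$ being the intersection of the $\t C_G(L_\alpha)$ (your coset count is the paper's contrapositive remark), and both gloss over the normalizer clauses in the definition of the almost centralizer, so you lose nothing there. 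Your route buys economy: no saturated extension, no Erd\H{o}s--Rado, explicit bounds. The paper's route buys witness sequences with a uniform $\alpha$, which is exactly the format reused in the remainder of the lemma and in Theorem \ref{Thm_sym}, where such families feed into consistency-of-types arguments.
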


\proof
For any $\alpha$ in $\Omega_L$, the subgroup $\t C_G(L_\alpha)$ contains $\t C_G(L)$ and therefore, if $[K: C_K(g /\t C_G(L))]$ is bounded we have as well that $[K: C_K(g / \t C_G(L_\alpha))]$ is bounded. Hence $\t C_G(K/\t C_G(L_\alpha))$ contains $\t C_G(K/\t C_G(L))$ and whence the implication from right to left holds. Now let $\delta$ be equal to $(2^{|T|})^+$ and suppose that $H \not\ls \t C_G(K/\t C_G(L))$.
Then one can find $(h_i: i \in (2^\delta)^+)$ in $H$ such that for $i$ different than $j$, the element $h_i^{-1} h_j$ does not belong to $ \t C_G (K / \t C_G(L))$ or equivalently $K \not \ls C_K( h_i^{-1}h_j /C_G(L))$. Let $i$ and $j$ be two different arbitrary ordinal numbers less than $(2^\delta)^+$. Then there exists $(k^{(i,j)}_n : n \in  (2^\delta)^+)$ such that
$$\left[h_i^{-1} h_j , (k^{(i,j)}_n)^{-1} k^{(i,j)}_m\right] \not \in \t C_G(L).$$
Note that if the centralizer of some element $g$ in $G$ has unbounded index in $L$ there exists also an $\alpha$ in $\Omega_L$ such that $C_{L_\alpha}(g)$ has unbounded index in $L_\alpha$. Thus the almost centralizer of $L$ is $G$ is the intersection of the almost centralizers of the $L_\alpha$'s in $G$.  So for any $n$ different than $m$ one can find $\alpha^{(i,j)}_{(n,m)}$ such that
$$\left[h_i^{-1} h_j , (k^{(i,j)}_n)^{-1} k^{(i,j)}_m\right] \not \in \t C_G\left(L_{\alpha^{(i,j)}_{(n,m)}}\right).$$
By Erdos-Rado there exists $I_{(i,j)}$ of $(2^\delta)^+$ of cardinality at least $\delta^+$ and $\alpha_{(i,j)}$ such that for all $n$ and $m$ in $I_{(i,j)}$, we have
$$\left[h_i^{-1} h_j , (k^{(i,j)}_n)^{-1} k^{(i,j)}_m\right] \not \in \t C_G\left(L_{\alpha_{(i,j)}}\right).$$
As $i$ and $j$ were arbitrary, we have that for any $i$ different than $j$, $$K \not \ls C_K( h_i^{-1}h_j/ \t C_G(L_{\alpha_{(i,j)}}))$$  for some $\alpha_{(i,j)}$ in $\Omega$. Again by Erdos-Rado there exists a subset $I$ of  $(2^\delta)^+$ of cardinality at least $\delta^+$ and $\alpha$ such that $\alpha_{(i,j)}$ is equal to $\alpha$ for $i$ different than $j$ in $I$ and thus for any such tuples we have $K \not \ls C_K( h_i^{-1}h_j /\t C_G(L_\alpha))$. Hence the elements $h_i^{-1}h_j$ do not belong to $\t C_G(K / \t C_G(L_\alpha))$ and whence witness that $H \not\ls \t C_G(K/\t C_G(L_\alpha))$.
\qed$_{\operatorname{claim}}$

By the claim, we may assume that $L$ is a type-definable group. Hence any relatively definable subgroup of $L$ has either finite or unbounded index, whence the group $\t C_G(L)$ is equal to the union of the following definable sets
$$S_{\phi, d}= \{ g \in G: \forall l_0, \dots, l_d \models \phi(x) \bigvee_{i\not= j} l_i^{-1} l_j \in C_G( g) \},$$
where $\phi(x)$ ranges over the formulas in the type $\pi_{L}(x)$ which defines $L$ and $d$ over all natural number.

Now, let $\kappa$ be any cardinal and to easer notation we let $\lambda$ be the Cartesian product of $\kappa$ with itself leaving out the diagonal. As in the proof of Theorem \ref{Thm_sym} we can find $(h_i : i \in \kappa)$ in $H$ a set of representatives of cosets of $\t C_H(K/\t C_G(L))$ in $H$. Hence, the centralizer of the elements $h^{-1}_i h_j/ \t C_G(L)$ for $(i,j)$ in $\lambda$ has infinite index in $K$ modulo $\t C_G(L)$. Whence, by Fact \ref{Fact_Neu}, the group $K/ \t C_G(L)$ can not be covered by finitely many translates of a finite union of these centralizers.
So the partial type below is consistent.
\begin{eqnarray*}
\pi(x_n : n \in \kappa) & = &\{ [h^{-1}_i h_j, x_n^{-1} x_m]  \not\in S_{\phi, d} : (n,m),(i,j)\in \lambda, \, d \in \omega, \phi \in \pi_L\} \\
&&\cup \{ x_n \in K : n\in \kappa\}
\end{eqnarray*}
Take $\bar k$ which satisfies $\pi(\bar x)$. By construction we have that $[h_i^{-1}h_j,k_n^{-1}k_m] \not\in \t C_G(L)$. Hence, $L \not\ls C_L([h_i^{-1}h_j,k_n^{-1}k_m])$. So $L$ cannot be covered by finitely many translates of a finite union of these centralizers. So the partial type below is again consistent.
\begin{eqnarray*}
\pi'(x_s : s \in \kappa) &= &\{ [[h_i^{-1}h_j,  k^{-1}_i k_j], x_s^{-1} x_t]\not= 1:  (i,j), (n,m), (s,t ) \in \lambda\}\\
&& \cup \{ x_s \in L : s\in \kappa\}
\end{eqnarray*}

A realization of this type together with the $(h_i : i \in \kappa)$ and $(k_n: n \in \kappa)$ satisfies the demanded properties.

On the other hand, suppose for any cardinal $\kappa$, there exists a saturated extension $\mathcal G$ of $G$ and elements $(h_i : i \in \kappa)$ in $H(\mathcal G)$, $(k_n: n \in \kappa)$ in $K(\mathcal G)$, and $(l_s : s\in \kappa)$ in $L(\mathcal G)$ such that
$$ [[h_i^{-1}h_j,k_n^{-1}k_m],l_s^{-1}l_t] \not = 1\ \ \ \forall  i,j,n,m,s,t\in \kappa,\ i \not= j, n \not= m, s \not=t.$$
%
If $H \ls \t C_G(K/\t C_G(L))$ then one can find $i$ different than $j$ such that $h_i^{-1}h_j>$ is an element of $ \t C_G(K/\t C_G(L))$.
So the index of $C_K(h_i^{-1}h_j/  \t C_G(L))$ in $K$ is bounded. Once more this implies that one can find $n$ different than $m$ such that $k_n^{-1}k_m \in C_G(h_i^{-1}h_j / \t C_G(L))$. 
Thus $[h_i^{-1}h_j,k_n^{-1}k_m]$ is an element of $\t C_G(L)$ or equivalently the index of $C_L([h_i^{-1}h_j,k_n^{-1}k_m])$ has bounded index in $L$. Thus there exists $s$ different than $t$ such that $ [[h_i^{-1}h_j,k_n^{-1}k_m],l_s^{-1}l_t] = 1$ which contradicts our assumption and the Lemma is established.
\qed

Now we are ready to prove the approximate three-subgroup lemma.

\begin{theorem}\label{thm_3sl1}
Let $H$, $K$ and $L$ be three ind-definable subgroups of $G$ which normalize each other.  If
$$H \ls \t C_G(K/\t C_G(L)) \mbox{ and }K \ls \t C_G(L/\t C_G(H)) \mbox{ then } L \ls \t C_G(H/\t C_G(K)).$$
\end{theorem}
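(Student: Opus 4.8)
The plan is to reduce everything to the combinatorial characterization furnished by the preceding lemma and then to run the classical three-subgroup argument, the Hall--Witt identity, at the level of these commutator patterns. Observe first that the three assertions are the cyclic rotations of a single condition: writing $P(X,Y,Z)$ for ``$X\ls\t C_G(Y/\t C_G(Z))$'', the hypotheses are $P(H,K,L)$ and $P(K,L,H)$ and the conclusion is $P(L,H,K)$. I would argue by contradiction, assuming $P(H,K,L)$, $P(K,L,H)$ and $\neg P(L,H,K)$. The hypothesis of the preceding lemma is met in each cyclic role since $H$, $K$, $L$ normalize one another. By that lemma each failure $\neg P$ is equivalent to producing, for every cardinal, three indexed families whose iterated difference-commutators are all nontrivial; dually, since that combinatorial condition obviously passes to subfamilies, each of $P(H,K,L)$ and $P(K,L,H)$ yields a cardinal bound $\mu_1$, respectively $\mu_2$, beyond which no family realising the corresponding pattern can exist.

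Next, starting from $\neg P(L,H,K)$, I would fix a very large $\kappa$ and obtain in some extension $\mathcal G$ families $(l_i)$, $(h_n)$, $(k_s)$ of length $\kappa$ with $[[l_i^{-1}l_j,h_n^{-1}h_m],k_s^{-1}k_t]\neq 1$ for all admissible indices. Setting $a=l_i^{-1}l_j$, $b=h_n^{-1}h_m$, $c=k_s^{-1}k_t$, the algebraic engine is the Hall--Witt identity $[[a,b^{-1}],c]^{b}\,[[b,c^{-1}],a]^{c}\,[[c,a^{-1}],b]^{a}=1$. Here I would exploit that each difference family $\{x_i^{-1}x_j:i\neq j\}$ is closed under inversion, so replacing $b$ by $b^{-1}$ (that is, swapping $n$ and $m$) keeps us inside the same family and turns the first factor into a conjugate of our nontrivial commutator. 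Hence for every tuple at least one of $[[b,c^{-1}],a]\neq 1$ (a $[[h,k],l]$-pattern) or $[[c,a^{-1}],b]\neq 1$ (a $[[k,l],h]$-pattern) must hold, which defines a two-colouring of the index-boxes $[\kappa]^2\times[\kappa]^2\times[\kappa]^2$.

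Finally I would apply an iterated (product) version of Erdos--Rado to extract, from a sufficiently large $\kappa$, subsets $A,B,C\subseteq\kappa$ of some common size $\lambda\geq\max(\mu_1,\mu_2)$ on which the colour is constant. Using once more that the difference families are inversion-closed, so that $c^{-1}$ ranges over the same set as $c$ when its index pair is swapped, a constant colour $1$ on the box $A\times B\times C$ produces a family realising the $[[h,k],l]$-pattern of length $\lambda\geq\mu_1$, which by the preceding lemma contradicts $P(H,K,L)$; a constant colour $2$ produces the $[[k,l],h]$-pattern of length $\lambda\geq\mu_2$, contradicting $P(K,L,H)$. Either way we reach a contradiction, so $\neg P(L,H,K)$ is impossible and the theorem follows. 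The step I expect to be the main obstacle is exactly this combinatorial extraction: one needs a \emph{product} Ramsey statement yielding a monochromatic box $A\times B\times C$ rather than a single monochromatic set, which forces $\kappa$ to be chosen as a suitably tall iterated exponential of $\max(\mu_1,\mu_2)$. The secondary nuisance is the sign and relabelling bookkeeping in the Hall--Witt step, which is precisely what the closure of the difference families under inversion is there to absorb.
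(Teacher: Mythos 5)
Your proposal is correct and follows essentially the same route as the paper's proof: negate the conclusion, invoke the preceding characterization lemma to produce the three long families, apply the Hall--Witt identity (using inversion-closure of the difference families) to two-colour the index tuples, extract a monochromatic configuration by Erd\H{o}s--Rado, and feed it back into the lemma to contradict one of the two hypotheses. The only cosmetic difference is that the paper avoids your ``product Ramsey'' worry by noting that all three families are indexed by the same cardinal, so ordinary Erd\H{o}s--Rado on $6$-tuples yields a single monochromatic set $I$ serving as your $A=B=C$; your explicit bounds $\mu_1,\mu_2$ are a slightly more careful rendering of the paper's ``without loss of generality'' step.
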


\proof
Assume towards a contradiction that $L \not \ls \t C_G(H/\t C_G(K))$. Let $\kappa$ be any cardinal greater or equal to $2^{|T|}$. By the previous lemma we can find $(l_s : s\in (2^\kappa)^+)$ in $L$, $(k_n: n \in (2^\kappa)^+)$ in $K$ and $(h_i : i\in (2^\kappa)^+)$ in $H$ in an enough saturated extension of $G$ such that
$$ [[l_s^{-1}l_t,h_i^{-1}h_j], k_n^{-1}k_m ] \not = 1\ \ \ \forall  i,j,n,m,s,t\in (2^\kappa)^+,\ i \not= j, n \not= m, s \not=t.$$
By the Witt-identity, for every tuple $(i,j,n,m,s,t)$ in $(2^\kappa)^+$ with $i \not= j, n \not= m, s \not=t$ either $[[h_j^{-1}h_i,k_m^{-1}k_n],l_s^{-1}l_t] \not = 1$ or $[[k_n^{-1}k_m,l_t^{-1}l_s], h_j^{-1}h_i] \not = 1$. By Erdos-Rado we can find a subset $I$ of $\kappa^+$ such that for all  $(i,j,n,m,s,t)$ in $I$ with $i \not= j, n \not= m, s \not=t$ the same inequality of the two holds. Without lost of generally, we may assume that $[[h_i^{-1}h_j,k_n^{-1}k_m],l_s^{-1}l_t] \not = 1$ for all  $(i,j,n,m,s,t)$ in $I^6$ with $i \not= j, n \not= m, s \not=t$. But than the previous lemma yields that $H \not\ls \t C_G(K/\t C_G(L))$ which contradicts our assumptions and finishes the proof.
\qed



\subsection{Almost commutator} \

To easer the notation in this subsection we let $\F$ be family of all $A$-definable subgroups of $G$. Note that this family is stable under finite intersection and finite product. Then we may define the almost commutator.
\begin{defn}
For two $A$-ind-definable subgroups $H$ and $K$ of $G$, we define:
$$\[ H, K \]_A :=\bigcap \{L \in \F: 
\, L=L^{N_G(H)}=L^{N_G(K)}, \,  H \lesssim \t C_G(K/L)\}$$
and call it the \emph{almost $A$-commutator} of $H$ and $K$. If $A$ is the empty set we omit the index and just say the almost commutator.
\end{defn}

By Theorem \ref{Thm_sym} the almost commutator is symmetric, i.\ e.\ for two ind-definable subgroups $H$ and $K$ , we have $\[H,K\] = \[K, H\]$. Moreover, it is the intersection of a descending directed system of definable subgroups of $G$.
Note that the ordinary commutator of two $A$-ind-definable groups is not necessary definable nor the intersection of definable subgroups, and hence, other than for the almost centralizer, one cannot compare it with its approximate version.

We will mainly be interested in the almost commutator of normal subgroups of $G$. In this case, the subgroup $\[ H, K \]$ is the intersection of normal subgroups in $G$. Hence we may restrict $\F$ to contain only the normal $A$-definable subgroups of $G$ and the definition simplifies to:
$$\[ H, K \]_A :=\bigcap \{L\in \F: \,  H \lesssim \t C_G(K/L)\}.$$

\begin{defn}
We define the \emph{almost lower $A$-central series} of an $A$-ind-definable subgroup $H$ of $G$:  $$(\t\gamma_1 H)_A = H\ \ \mbox{ and }\ \ (\t\gamma_{i+1} H)_A = \[\t\gamma_i H, H\]_A.$$ Again, if $A$ is the empty set we omit the index.
\end{defn}

\begin{remark}
The almost lower center series is well-defined as $\[H,H\]$ is the intersection of $A$-definable groups and hence $A$-type-definable. So by induction we see that $\tg_{i+1} H = \[\tg_i H,H\]$ is again an $A$-type-definable subgroup.
\end{remark}

In the rest of this subsection, we establish basic properties of the almost commutator in arbitrary groups.

\begin{lemma}\label{Lem_SmlSubF}
For any $A$-ind-definable subgroups $H$ and $K$ of $G$, we have that $H\ls  \t C_G(K/\[H,K\]_A)$. In particular, $\[H,K\]_A$ is the smallest intersection of $A$-definable subgroups for which this holds.
\end{lemma}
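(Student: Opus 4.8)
The plan is to compute directly with the family defining the almost commutator, the decisive point being that this family is \emph{small}. As in the proof of Theorem \ref{Thm_sym}, I would first reduce to $A=\emptyset$ with $G$ sufficiently saturated and write $M := \[H,K\] = \bigcap_{L \in \F_0} L$, where $\F_0 = \{L \in \F : L = L^{N_G(H)} = L^{N_G(K)},\ H \ls \t C_G(K/L)\}$ is the family of definable subgroups occurring in the definition. Since every such $L$ is normalized by $N_G(H)$ and by $N_G(K)$, and $H \le N_G(H)$, $K \le N_G(K)$, both $H$ and $K$ normalize each $L$ and hence normalize $M$; so $\t C_G(K/M)$ and the relative centralizers $C_K(h/M)$ are all well defined. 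The single structural fact I will rely on is that $G$ has at most $|T|$ many $\emptyset$-definable subgroups, whence $|\F_0| \le |T|$.

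The heart of the matter is the identity $H \cap \t C_G(K/M) = \bigcap_{L \in \F_0}\bigl(H \cap \t C_G(K/L)\bigr)$. For a fixed $h \in H$ one has $C_K(h/M) = \bigcap_{L \in \F_0} C_K(h/L)$, because $[k,h]$ lies in $M = \bigcap_{L} L$ exactly when it lies in every $L$. The inclusion $\subseteq$ is free: $M \le L$ gives $\t C_G(K/M) \le \t C_G(K/L)$ for each $L$. For the reverse inclusion, suppose $h \in \t C_G(K/L)$ for every $L \in \F_0$; then each $C_K(h/L)$ has bounded index in $K$, and since $C_K(h/M)$ is the intersection of at most $|T|$ many of these, it too has bounded index in $K$; thus $h \in \t C_G(K/M)$.

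Granting the identity, the first assertion is immediate: $H \cap \t C_G(K/M)$ is an intersection of at most $|T|$ many subgroups $H \cap \t C_G(K/L)$, each of bounded index in $H$ by membership in $\F_0$, and hence has bounded index in $H$, i.e.\ $H \ls \t C_G(K/M)$. The ``in particular'' is then formal: if $N = \bigcap_j N_j$ is any intersection of (suitably normalized) $A$-definable subgroups with $H \ls \t C_G(K/N)$, then $N \le N_j$ forces $H \ls \t C_G(K/N_j)$, so each $N_j$ lies in $\F_0$ and therefore contains $M = \bigcap \F_0$; thus $M \le N$, and $M$ is the smallest such intersection.

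The one genuinely delicate step is the reverse inclusion in the second paragraph: passing from a modulus $L$ to the smaller group $M$ shrinks the relative centralizers, so bounded index could in principle deteriorate to unbounded index upon intersecting. What prevents this is a pure cardinality phenomenon---$M$ is carved out by only boundedly many definable subgroups, so one never forms an unbounded intersection. In writing this up I would make the bookkeeping explicit: each index in play is at most $2^{|T|}$, and a product of at most $|T|$ such cardinals is again at most $(2^{|T|})^{|T|} = 2^{|T|}$, so boundedness is genuinely preserved and is not merely assumed.
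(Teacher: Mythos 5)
Your proof is correct, and it takes a genuinely different route from the paper's. The paper proves the main assertion by contradiction: if $H \not\ls \t C_G(K/\[H,K\]_A)$, then in a large elementary extension one finds $(2^{2^{|T|}})^+$ many coset representatives $(h_i)$ of $\t C_H(K/\[H,K\]_A)$ in $H$, and for each pair $i \not= j$ equally many representatives $(k^{(i,j)}_n)$ in $K$, so that every commutator $[h_i^{-1}h_j,(k^{(i,j)}_n)^{-1}k^{(i,j)}_m]$ avoids some definable group $L_\alpha$ of the defining family; two applications of Erdos--Rado make $\alpha$ constant, and the resulting array witnesses $H \not\ls \t C_G(K/L_\alpha)$, contradicting the defining property of the family. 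You replace this Ramsey-theoretic extraction by direct cardinal bookkeeping: your family $\F_0$ has at most $|T|$ members since its members are $\emptyset$-definable, every bounded index in sight is at most $2^{|T|}$ (the same quantitative fact the paper invokes in the proof of Lemma \ref{Lem_ProdCon}), and indices are submultiplicative under intersection, so $C_K(h/M)=\bigcap_{L\in\F_0}C_K(h/L)$ and $H\cap\t C_G(K/M)=\bigcap_{L\in\F_0}\bigl(H\cap\t C_G(K/L)\bigr)$ retain bounded index; the normalization conditions $L=L^{N_G(H)}=L^{N_G(K)}$ built into $\F_0$ are, as you note, exactly what guarantee that elements of $H$ and $K$ normalize every $L$ and hence $M$, so no normalizer caveats arise once everything is intersected with $H$. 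Your route is more elementary (no Erdos--Rado, no realization of types) and in fact needs less than you compute: an intersection of any fixed bounded family of bounded-index invariant subgroups has bounded index, since the crude bound $\prod_{L}\kappa_L$ is still a single cardinal valid uniformly in all elementary extensions, so the sharp equality $(2^{|T|})^{|T|}=2^{|T|}$ is a luxury rather than a necessity. What the paper's heavier template buys is uniformity of method: the same Erdos--Rado argument drives Theorem \ref{Thm_sym} and the lemma preceding Theorem \ref{thm_3sl1}, where the groups involved are ind-definable unions of type-definable pieces and one must locate a single piece witnessing failure --- a situation that pure counting does not handle; for the present lemma, where the defining family consists of honest $\emptyset$-definable groups, your counting argument is the shorter and more transparent one.
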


\proof
To easer notation we may assume that $A$ is the empty set.  Let $\kappa = 2^{|T|}$. If the index of  $ \t C_H(K/\[H,K\])$ in $H$ is unbounded, up to passing to some extension of $G$, we can find $\{h_i: i \in ({2^\kappa})^+\}$ a set of representatives of cosets of $ \t C_H(K/\[H,K\])$ in $H$. Hence $h_i^{-1}h_j \not\in \t C_H(K/\[H,K\])$, i.\ e.\ the index of $C_K(h_i^{-1}h_j / \[H,K\]))$ in $K$ is unbounded.
Whence, for any $i$ different than $j$, we can find a set of representatives $\{k^{(i,j)}_n:n \in  ({2^\kappa})^+\}$ of cosets of  $C_K(h_i^{-1}h_j / \[H,K\]))$ in $K$. Thus, for $n$ different than $m$, we have that $[h_i^{-1}h_j, {k^{(i,j)}_n}^{-1}k_m^{(i,j)}] \not \in \[H,K\]$. But the group $\[H,K\]$ is the bounded intersection of definable groups $L_\alpha$ with $\alpha$ in some index set $\Omega$. So for any given tuple $(i,j,n,m)$, we find an element $\alpha_{(n,m)}^{(i,j)}$ in $\Omega$ such that  $[h_i^{-1}h_j, {k^{(i,j)}_n}^{-1}k_m^{(i,j)}] \not \in L_{\alpha_{(n,m)}^{(i,j)}}$. Applying Erdos-Rado twice, we can find subsets $I_H$ and $I_K^{(i,j)}$ (for all $(i,j)$ in $I_H\times I_H \setminus \{(i,i): i \in I_H\}$) of $({2^\kappa})^+$ of size at least $\kappa^+$ and $\alpha$ in $\Omega$ such that for all $(i,j)$ in $I_H\times I_H \setminus \{(i,i): i \in I_H\}$ and $(n,m)$ in $I_K^{(i,j)}\times I_K^{(i,j)}\setminus \{(n,n): n \in I_K^{(i,j)}\}$, we have that $\alpha_{(n,m)}^{(i,j)}$ is equal to $\alpha$. But now, these elements witness that $H$ is not almost contained in $\t C_G(K/ L_\alpha)$ which leads to a contradiction and gives the first part of the Lemma. 

Now, let $L$ be an intersection of $A$-definable subgroups such that  $H\ls  \t C_G(K/L)$. Than, this holds for any of the definable subgroups in the intersection. Thus, those subgroups contain $\[H,K\]$ and therefore $L$ contains $\[H,K\]$.
\qed

Using the previous lemma we obtain immediately the following corollaries.

\begin{cor}\label{Cor_EqCenCom} Let $H$ and $K$ be two $A$-ind-definable normal subgroups of $G$ and $L$ be the intersection of normal $A$-definable subgroups of $G$. Then, we have that  $H \ls \t C_G(K/L)$ if and only if $\[H,K\]_A \leq L$.
\end{cor}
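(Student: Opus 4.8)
The plan is to deduce both implications directly from Lemma~\ref{Lem_SmlSubF} together with a single monotonicity observation about almost centralizers. First I would record the elementary fact that the almost centralizer modulo a normal subgroup is monotone in its modulus: if $M$ and $N$ are $A$-invariant subgroups normal in $G$ with $M \leq N$, then $\t C_G(K/M) \leq \t C_G(K/N)$. This holds because $C_K(g/M) \leq C_K(g/N)$ for every $g$ (if $[k,g] \in M$ then $[k,g] \in N$), so whenever $C_K(g/M)$ has bounded index in $K$ the same is true of the larger subgroup $C_K(g/N)$. Since $N$ is normal in $G$ we have $g \in N_G(N)$ automatically, and hence membership $g \in \t C_G(K/M)$ forces $g \in \t C_G(K/N)$.

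For the implication from right to left, assume $\[H,K\]_A \leq L$. Lemma~\ref{Lem_SmlSubF} gives $H \ls \t C_G(K/\[H,K\]_A)$, and applying the monotonicity observation to $\[H,K\]_A \leq L$ yields $\t C_G(K/\[H,K\]_A) \leq \t C_G(K/L)$; transitivity of $\ls$ then gives $H \ls \t C_G(K/L)$. For the converse, assume $H \ls \t C_G(K/L)$ and write $L = \bigcap_\beta L_\beta$ as an intersection of normal $A$-definable subgroups. Since $L \leq L_\beta$ for each $\beta$, monotonicity gives $\t C_G(K/L) \leq \t C_G(K/L_\beta)$, so $H \ls \t C_G(K/L_\beta)$ for every $\beta$. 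Each $L_\beta$ therefore belongs to the family $\{L' \in \F : H \ls \t C_G(K/L')\}$ of normal $A$-definable subgroups defining the almost commutator in its simplified form, whence $\[H,K\]_A \leq L_\beta$; intersecting over $\beta$ gives $\[H,K\]_A \leq \bigcap_\beta L_\beta = L$.

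The argument is essentially a repackaging of the minimality clause of Lemma~\ref{Lem_SmlSubF}, so I do not expect a genuine obstacle; the only points requiring care are bookkeeping ones. Specifically, I must confirm that the normality of $H$, $K$, and the $L_\beta$ lets me use the simplified description $\[H,K\]_A = \bigcap\{L' \in \F : H \ls \t C_G(K/L')\}$ ranging over normal $A$-definable $L'$, and check that all the normalizer side conditions in the definition of $\t C_G(K/\cdot)$ are met, which they are because every modulus in sight is normal in $G$. No saturation or Erdos-Rado input is needed here beyond what already powers Lemma~\ref{Lem_SmlSubF}.
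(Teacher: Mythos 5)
Your proof is correct and follows essentially the same route as the paper, which derives this corollary immediately from Lemma~\ref{Lem_SmlSubF}: your forward direction is exactly the minimality clause of that lemma (pass from $L$ to the definable subgroups $L_\beta$ in the intersection and use the simplified definition of $\[H,K\]_A$ for normal $H$, $K$), and your backward direction is its first assertion combined with monotonicity of $\t C_G(K/\cdot)$ in the modulus. The monotonicity observation you spell out is implicit in the paper's argument, so there is nothing genuinely different here.
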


\begin{cor}\label{cor_HHleqK}
For any almost commutator of two $A$-ind-definable subgroups $H$ and $K$ and any intersection $L$ of $A$-definable normal subgroups, we have that $\[H,K\]_A \ls L$ if and only if $\[H,K\]_A \leq L$
\end{cor}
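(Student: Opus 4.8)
The plan is to treat the two directions asymmetrically. The implication $\[H,K\]_A \leq L \Rightarrow \[H,K\]_A \ls L$ is immediate, since a subgroup sits with index one (hence bounded index) inside any subgroup containing it. So the entire content is the converse: assuming $C := \[H,K\]_A \ls L$, I want to upgrade almost-containment to genuine containment. The natural tool for producing an \emph{honest} inclusion of $C$ is the minimality clause of Lemma \ref{Lem_SmlSubF}: if $L$ is an intersection of $A$-definable subgroups and $H \ls \t C_G(K/L)$, then $\[H,K\]_A \leq L$. Thus it suffices to establish $H \ls \t C_G(K/L)$, and the given form of $L$ (an intersection of $A$-definable \emph{normal} subgroups, each of which is automatically invariant under $N_G(H)$ and $N_G(K)$ and so lies in the defining family) makes this reduction clean.

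The key step is therefore to show that $H \ls \t C_G(K/C)$, which holds by the first part of Lemma \ref{Lem_SmlSubF}, together with $C \ls L$, forces $H \ls \t C_G(K/L)$. Naive monotonicity of $\t C_G(K/{-})$ delivers this only when $C \leq L$; to run the argument with mere almost-containment I would use a covering/counting argument on conjugacy classes. Fix $h$ in $\t C_H(K/C) = H \cap \t C_G(K/C)$, a subgroup of bounded index in $H$. Then $[K : C_K(h/C)]$ is bounded, i.e. the set $\{[h,k]C : k \in K\}$ of $C$-cosets of commutators has bounded size, so $\{[h,k] : k \in K\}$ is covered by boundedly many cosets $x_i C$. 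Passing to $G/L$ and using that $L$ is normal — so that $CL/L \cong C/(C\cap L)$ is a group of bounded order $[C : C\cap L]$, bounded precisely because $C \ls L$ — each $x_i C$ maps into the bounded set $\bar x_i \cdot (CL/L)$. Hence $\{[h,k]L : k \in K\}$ has bounded size, which says exactly that $[K : C_K(h/L)]$ is bounded, i.e. $h \in \t C_H(K/L)$. Thus $\t C_H(K/C) \leq \t C_H(K/L)$, and as the former has bounded index in $H$, we get $H \ls \t C_G(K/L)$.

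With $H \ls \t C_G(K/L)$ in hand, the minimality part of Lemma \ref{Lem_SmlSubF} (applied either directly to $L$, or to each normal $A$-definable member $M_\beta$ of the intersection defining $L$, noting $H \ls \t C_G(K/L) \leq \t C_G(K/M_\beta)$ by genuine monotonicity and intersecting the resulting inclusions $C \leq M_\beta$) yields $C \leq L$, completing the proof. The main obstacle is exactly the middle step: transferring $H \ls \t C_G(K/C)$ across an almost-containment $C \ls L$ rather than an honest inclusion. What makes it go through is the normality of $L$, which converts $C \ls L$ into the statement that the image of $C$ in $G/L$ is a group of bounded order, so that bounded-index centralizer data modulo $C$ survives modulo $L$ up to a bounded factor. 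Without normality of $L$ this reduction would break down, which is presumably why the hypothesis restricts $L$ to intersections of \emph{normal} definable subgroups.
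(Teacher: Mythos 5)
Your proposal is correct and takes essentially the same route as the paper: both use the first part of Lemma \ref{Lem_SmlSubF} to get $H \ls \t C_G(K/\[H,K\]_A)$, transfer this across the bounded-index containment $\[H,K\]_A \ls L$, and conclude via the minimality clause of that lemma. The only difference is cosmetic --- the paper shifts the modulus to $\[H,K\]_A \cap L$ and applies minimality there, while you shift directly to $L$ (spelling out the coset-counting and normality details that the paper's one-line transfer step leaves implicit).
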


\proof
The implication from right to left is trivial. So suppose that $\[H,K\]_A \ls L $. Lemma \ref{Lem_SmlSubF} yields that $H\ls  \t C_G(K/\[H,K\]_A)$. Furthermore, by assumption we have that the intersection of $A$-definable subgroups $\[H,K\]_A \cap L$ has bounded index in $\[H,K\]_A $. So, we have as well that $H\ls  \t C_G(K/(\[H,K\]_A\cap L))$. As $\[H,K\]_A $ is the smallest subgroup for which this holds, we obtain the result.
\qed

The next lemma seems rather trivial but it is essential for almost any proof concerning computations with almost commutators.
\begin{lemma}\label{Lem_ComBas}
For $H$, $K$, $N$ and $M$ be normal $A$-ind-definable subgroups of $G$ we have the following:
\begin{enumerate}
\item If $N \ls H$ and $M \ls K$ then $\[ N, M \]_A \leq \[ H, K \]_A$.
\item The almost commutator of $H$ and $K$ is again normal in $G$ and if $H$ (resp. $K$) is the intersection of definable groups it is contained in $H $ (resp. $ K$).
\item If $H$ and $K$ are assumed to be definable then $N \ls \t C_G(M/H)$ implies that $N \ls \t C_G(M/HK)$
\end{enumerate}
\end{lemma}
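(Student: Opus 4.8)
The plan is to handle the three items separately, drawing on the minimality clause of Lemma~\ref{Lem_SmlSubF}, Corollary~\ref{Cor_EqCenCom}, the symmetry of the almost commutator, and one elementary monotonicity principle for almost centralizers that I would record first. The principle has two halves: if $M \ls K$ and $L$ is normalized by $M$ and $K$, then $\t C_G(K/L) \le \t C_G(M/L)$; and if $L \le L'$ are both normalized by $M$, then $\t C_G(M/L) \le \t C_G(M/L')$. Both are coset counts: for the first, given $g \in \t C_G(K/L)$ the group $C_{M\cap K}(g/L) = (M\cap K)\cap C_K(g/L)$ has index at most $[K:C_K(g/L)]$ in $M \cap K$, and $[M:M\cap K]$ is bounded, so $C_M(g/L)$ has bounded index in $M$; for the second, $\{m : [m,g]\in L\} \subseteq \{m : [m,g]\in L'\}$ immediately enlarges the centralizer. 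I expect these estimates, rather than any single deep step, to carry most of the weight.

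Item~(3) is then immediate. As $H,K \trianglelefteq G$ are definable, $HK$ is a normal definable subgroup with $H \le HK$, so the second half of the principle gives $\t C_G(M/H) \le \t C_G(M/HK)$, and transitivity of $\ls$ upgrades $N \ls \t C_G(M/H)$ to $N \ls \t C_G(M/HK)$.

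For item~(1) I would reduce, via Corollary~\ref{Cor_EqCenCom}, the goal $\[N,M\]_A \le \[H,K\]_A$ to the statement $N \ls \t C_G(M/\[H,K\]_A)$, writing $L := \[H,K\]_A$ (an intersection of normal $A$-definable subgroups). Lemma~\ref{Lem_SmlSubF} gives $H \ls \t C_G(K/L)$; the first half of the monotonicity principle applied with $M \ls K$ yields $H \ls \t C_G(M/L)$. Reading this through Corollary~\ref{Cor_EqCenCom} as $\[H,M\]_A \le L$, and using $\[H,M\]_A = \[M,H\]_A$ to pass back, gives $M \ls \t C_G(H/L)$; the principle applied with $N \ls H$ then gives $M \ls \t C_G(N/L)$, and a final passage through Corollary~\ref{Cor_EqCenCom} returns $N \ls \t C_G(M/L)$, as required. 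The subtlety to watch is that $L$ is merely type-definable, so each ``symmetry'' move must be routed through Corollary~\ref{Cor_EqCenCom} (stated for intersections of normal definable subgroups) rather than through Theorem~\ref{Thm_sym} directly.

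For item~(2), normality of $\[H,K\]_A$ is immediate: with $H,K \trianglelefteq G$ the defining family consists of normal $A$-definable subgroups, and an intersection of normal subgroups is normal. For the containment, write $H$ as an intersection of normal $A$-definable subgroups $H_i$. For each $i$, every $h \in H \le H_i$ and $k \in K$ satisfy $[h,k] = h^{-1}h^{k} \in H_i$ because $H_i \trianglelefteq G$, whence $C_K(h/H_i) = K$ and $H \le \t C_G(K/H_i)$; in particular $H \ls \t C_G(K/H)$. Since $H$ is an intersection of members of the defining family, the minimality clause of Lemma~\ref{Lem_SmlSubF} forces $\[H,K\]_A \le H$, and symmetrically $\[H,K\]_A \le K$. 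Here the step needing attention --- and the main obstacle I anticipate --- is the reduction allowing one to present a normal intersection of definable subgroups as an intersection of \emph{normal} definable subgroups, so that the $H_i$ are genuinely eligible for the family; this is where the normality hypothesis on $H$ is essential and must be used with some care.
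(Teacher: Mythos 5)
Your items (1) and (3) are correct, and your (3) is exactly the ``inspection of the definition'' that the paper leaves implicit. On (1), however, you take a genuinely different and somewhat heavier route than the paper does. The paper fixes an \emph{arbitrary} normal $A$-definable $L$ from the defining family of $\[H,K\]_A$ (i.e.\ with $H \ls \t C_G(K/L)$) and shows directly that $N \ls \t C_G(M/L)$: since $K\cap M \leq K$, the coset count you record gives $H \ls \t C_G(K\cap M/L)$; transitivity of $\ls$ together with $N \ls H$ immediately gives $N \ls \t C_G(K\cap M/L)$; and since $M \sim K\cap M$ and commensurate groups have the same almost centralizer, $N \ls \t C_G(M/L)$, whence $\[N,M\]_A \leq L$; as $L$ was arbitrary, $\[N,M\]_A \leq \[H,K\]_A$. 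This never invokes symmetry. You instead work with the single group $L = \[H,K\]_A$ and shuttle through Corollary \ref{Cor_EqCenCom} and the symmetry $\[H,M\]_A = \[M,H\]_A$ (hence through Theorem \ref{Thm_sym}). That is valid --- all four groups are normal $A$-ind-definable, so Corollary \ref{Cor_EqCenCom} applies, and $\[H,K\]_A$ is an intersection of normal $A$-definable groups --- but note that your symmetry detour for the first coordinate is unnecessary: once you have $H \ls \t C_G(M/L)$, the hypothesis $N \ls H$ and transitivity of $\ls$ yield $N \ls \t C_G(M/L)$ in one step. What the paper's route buys is independence from Theorem \ref{Thm_sym}; what yours buys is a reusable monotonicity principle, which is correct as you state it.

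On (2), the obstacle you flag is real, but you should know that the paper does not resolve it either, so your proof is at parity with the published one rather than behind it. The paper argues that $H \leq \t C_G(K/H)$ holds trivially (normality of $H$ gives $[h,k] = h^{-1}h^k \in H$, so $C_K(h/H)=K$), and then, since $H$ is an intersection of definable groups, concludes $\[H,K\]_A \leq H$ from the minimality clause of Lemma \ref{Lem_SmlSubF}. But that clause is proved by writing $L = \bigcap_i L_i$, passing from $H \ls \t C_G(K/L)$ to $H \ls \t C_G(K/L_i)$, and then asserting that each $L_i$ contains $\[H,K\]_A$ --- a step which tacitly assumes each $L_i$ belongs to the defining family, i.e.\ satisfies the invariance condition $L_i = L_i^{N_G(H)} = L_i^{N_G(K)}$ (normality in $G$, in the present setting). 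That is precisely the presentation problem you isolate: the definable groups cutting out a normal type-definable $H$ need not themselves be normal. Granting such a presentation, your way of finishing --- checking that each normal $H_i$ is literally a member of the defining family, so that $\[H,K\]_A \leq \bigcap_i H_i = H$ --- is if anything cleaner than routing through the minimality clause, and your explicit flag is more honest than the paper's silence; but neither your text nor the paper supplies the missing reduction.
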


\proof
To prove (1), we let $L$ be an arbitrary normal $A$-definable subgroup of $G$ such that $H \ls \t C_G(K/L)$. Since $K \cap M$ is a subgroup of $K$, we have as well that $H \ls \t C_G(K \cap M/L)$. As $N$ is almost contained in $H$, we may replace $H$ by $N$ and obtain $N\ls \t C_G(K \cap M/L)$. Additionally, the almost centralizer of two commensurable $A$-ind-definable subgroup such as $M$ and $K \cap M$ coincides. Thus we conclude that $N\ls \t C_G(M/L)$ or in orther words $\[ N, M\]_A \leq L$. As $L$ was arbitrary, the almost commutator $\[N, M\]_A$ is contained in $\[ H, K \]_A$.

As $H$ and $K$ are normal in $G$, the almost commutator is the intersection of normal subgroups of $G$ and hence normal in $G$. We have trivially that $H \leq \t C_G(K/H)$. So if $H$ is normal and the intersection of definable groups, we conclude that the almost commutator of $H$ and $K$ is contained in $H$.

An inspection of the definition gives immediately (3).
%
%
\qed

\begin{lemma}\label{Rem}
Let $H$ and $K$ be two subgroups of $G$ which are intersection of descending directed systems of $A$-definable sets, i.\ e.\ $H= \bigcap_{i \in I} H_i$ and $K= \bigcap_{s\in S} K_s$ such that for any $i$, $j$ in $I$ and $s$, $t$ in $S$ there exists $n$ in $I$ and $m$ in $S$ such that  $H_i \cap H_j \supseteq H_n$ and $K_s \cap K_t\supseteq K_m$. Then, we have that $$H K = \bigcap_{ (i, s) \in I \times S} H_i K_s.$$
\end{lemma}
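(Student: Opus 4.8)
The plan is to prove the two inclusions separately. The inclusion $HK \subseteq \bigcap_{(i,s)} H_iK_s$ is immediate: any $h \in H$ lies in every $H_i$ and any $k \in K$ lies in every $K_s$, so $hk \in H_iK_s$ for every pair $(i,s)$. All the content is in the reverse inclusion, and I would obtain it by a saturation (equivalently, compactness) argument, assuming as usual that $G$ is sufficiently saturated relative to the sizes of the index sets $I$ and $S$.

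Fix $x \in \bigcap_{(i,s)\in I\times S} H_iK_s$. For each pair $(i,s)$ set $D_{i,s} = H_i \cap xK_s^{-1}$, an $A\cup\{x\}$-definable subset of $G$; unwinding the definition, $h \in D_{i,s}$ precisely when $h \in H_i$ and $h^{-1}x \in K_s$. (Equivalently, one can phrase the whole argument as showing that the partial type $p(y) = \{y \in H_i : i \in I\} \cup \{y^{-1}x \in K_s : s \in S\}$ is consistent.) Since $x \in H_iK_s$, we may write $x = hk$ with $h \in H_i$ and $k \in K_s$, whence $h \in D_{i,s}$; thus each $D_{i,s}$ is nonempty. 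The crucial step is to verify that the family $\{D_{i,s} : (i,s)\in I\times S\}$ has the finite intersection property. Given finitely many pairs $(i_1,s_1),\dots,(i_p,s_p)$, iterating the directedness hypothesis produces $n \in I$ and $m \in S$ with $H_n \subseteq \bigcap_{l} H_{i_l}$ and $K_m \subseteq \bigcap_{l} K_{s_l}$; then for any $h \in D_{n,m}$ we have $h \in H_n \subseteq H_{i_l}$ and $h^{-1}x \in K_m \subseteq K_{s_l}$ for every $l$, so $D_{n,m} \subseteq \bigcap_{l} D_{i_l,s_l}$, and the right-hand side is nonempty because $D_{n,m}$ is.

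By saturation the full intersection $\bigcap_{(i,s)} D_{i,s}$ is then nonempty; choosing any $h$ in it gives $h \in \bigcap_i H_i = H$ and $h^{-1}x \in \bigcap_s K_s = K$, so that with $k := h^{-1}x$ we obtain $x = hk \in HK$, as required. I expect the finite intersection property to be the main obstacle — it is exactly the place where the \emph{descending directed} structure of both systems is essential, since without a common refining index $(n,m)$ one cannot force a single $h$ to serve all finitely many constraints simultaneously. The only other point demanding care is bookkeeping on the saturation level: one must ensure $G$ realizes a type over $A\cup\{x\}$ involving $|I|+|S|$ many conditions, which is harmless under the standing assumption that the ambient group is sufficiently saturated and that the relevant index sets are small.
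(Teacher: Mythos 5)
Your proof is correct and follows essentially the same route as the paper: the paper realizes the two-variable partial type $\{x \in H_i\} \cup \{y \in K_s\} \cup \{c = xy\}$ by saturation, which is just a reformulation of your one-variable type $p(y)$. The only difference is that you spell out the finite intersection property via directedness, a step the paper's proof leaves implicit, so your write-up is if anything slightly more complete.
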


\proof
Inclusion from left to right is obvious. So take $c \in  \bigcap_{ (i, s) \in I \times S} H_i K_s$. Thus for all $i \in I$ and $s \in S$ there exists $h_i$ in $H_i$ and $k_s$ in $K_s$ such that $c = h_i k_s$. So the following type over $A$ is consistent.
$$\pi(x,y) = \{x \in H_i : i \in I\} \cup \{y \in K_s : s \in S\} \cup \{c = xy\} $$
Hence by compactness and saturation of $G$, one can find $h \in  \bigcap_{i \in I} H_i=  H$ and $k \in \bigcap_{s\in S} K_s= K$ such that $c =hk$.
\qed

\begin{lemma}\label{Lem_StabMul} Let  $H$, $K$, and $L$ be $A$-ind-definable subgroups of $G$ which normalize each other. Then we have
$$\[HK, L\]_A \leq \[H, L\]_A\cdot \[K,L\]_A.$$
\end{lemma}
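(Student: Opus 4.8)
The plan is to reduce the whole statement to the minimality clause of Lemma~\ref{Lem_SmlSubF}. Write $M := \[H,L\]_A \cdot \[K,L\]_A$ for the target subgroup. Lemma~\ref{Lem_SmlSubF} tells us that $\[HK,L\]_A$ is the \emph{smallest} intersection of $A$-definable subgroups $P$ for which $HK \ls \t C_G(L/P)$. So it suffices to verify two things: that $M$ is itself an intersection of $A$-definable subgroups, and that $HK \ls \t C_G(L/M)$; these together give $\[HK,L\]_A \leq M$ at once.

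First I would check the structural claim on $M$. Each almost commutator is a directed intersection of members of $\F$, and by its definition the defining family may be taken to consist of subgroups invariant under $N_G(H)$, $N_G(K)$ and $N_G(L)$. Writing $\[H,L\]_A = \bigcap_i P_i$ and $\[K,L\]_A = \bigcap_s Q_s$ with $P_i, Q_s \in \F$, Lemma~\ref{Rem} gives $M = \bigcap_{(i,s)} P_i Q_s$. Since $\F$ is stable under finite products, each $P_i Q_s$ again lies in $\F$; hence $M$ is an intersection of $A$-definable subgroups, in particular a subgroup. Moreover, because $H$, $K$ and $L$ normalize each other they all lie in $N_G(H)\cap N_G(K)\cap N_G(L)$, which normalizes both factors; so $M$ is normalized by $H$, $K$ and $L$, and $\t C_G(L/M)$ is well defined.

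Next I would produce the centralization estimate. Applying Lemma~\ref{Lem_SmlSubF} to the pairs $(H,L)$ and $(K,L)$ yields $H \ls \t C_G(L/\[H,L\]_A)$ and $K \ls \t C_G(L/\[K,L\]_A)$. As $\[H,L\]_A \leq M$ and $\[K,L\]_A \leq M$ and both $H$ and $K$ normalize $M$, the almost centralizer is monotone in its modulus: for $h\in H$ the inclusion $\[H,L\]_A\leq M$ forces $C_L(h/\[H,L\]_A) \leq C_L(h/M)$, so bounded index is preserved and $H\cap \t C_G(L/\[H,L\]_A) \subseteq \t C_G(L/M)$; thus $H \ls \t C_G(L/M)$, and likewise $K \ls \t C_G(L/M)$. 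Since $H$ normalizes $K$, Lemma~\ref{Lem_ProdCon} now gives $HK \ls \t C_G(L/M)$, and the reduction of the first paragraph completes the argument.

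The step I expect to be the main obstacle is the structural claim of the second paragraph, namely that the product $M$ is genuinely an intersection of $A$-definable subgroups rather than merely an $A$-invariant set, since it is exactly this that licenses the minimality clause of Lemma~\ref{Lem_SmlSubF}. This is where the description of the almost commutator as a directed intersection of definable groups, together with Lemma~\ref{Rem} and the closure of $\F$ under finite products, is indispensable; by contrast the monotonicity of the almost centralizer in its modulus and the final application of Lemma~\ref{Lem_ProdCon} are routine.
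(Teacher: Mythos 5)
Your proposal is correct and takes essentially the same route as the paper: both proofs rest on writing the product $\[H,L\]_A\cdot\[K,L\]_A$ as a directed intersection of products of definable members via Lemma \ref{Rem}, then using closure of $\F$ under finite products, monotonicity of the almost centralizer in its modulus, and Lemma \ref{Lem_ProdCon} to get that $HK$ is almost contained in the relevant almost centralizers. The only difference is packaging: the paper secures the invariance requirements by working inside $N_G(H)\cap N_G(K)\cap N_G(L)$ with normal subgroups and compares the defining families member-by-member (via Lemma \ref{Lem_ComBas}(3)), never invoking Lemma \ref{Lem_SmlSubF}, whereas you check invariance inline and route the conclusion through the minimality clause of Lemma \ref{Lem_SmlSubF} -- a cosmetic rather than mathematical difference.
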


\proof To simplify the notation may work in $N_G(H) \cap N_G(K) \cap N_G(L)$ and therefore suppose that $H$, $K$ and $L$  are normal subgroups and restrict the family $\F$ to the normal subgroups of $N_G(H) \cap N_G(K) \cap N_G(L)$ in this proof.
\begin{eqnarray*}
\[H,L\]_A\cdot \[K,L\]_A& = &\bigcap \{ M\in \F : H \ls \t C_G (L/M)\} \cdot \bigcap \{ N\in \F : K \ls \t C_G (L/N)\} \\
& \overset{\ref{Rem}}=& \bigcap \{ M \cdot N :  M,N \in \F ,\, H \ls \t C_G (L/M),\, K \ls \t C_G (L/N)\}
 \end{eqnarray*}
As the product of two groups in $\F$ is again group which belongs to $\F$ and since $ H \ls \t C_G (L/M)$ and $ K \ls \t C_G (L/N)$,  Lemma \ref{Lem_ComBas} (3) yields that $ H \ls \t C_G (L/MN)$ and $ K \ls \t C_G (L/MN)$. So by Lemma \ref{Lem_ProdCon} we obtain $ HK \ls \t C_G (L/MN)$. Thus, the previous set is contained in the following one: 
\begin{eqnarray*}
&\supseteq & \bigcap \{  P\in \F : HK \ls \t C_G (L/P) \} \ \ \ \ \ \ \ \ \ \ \ \ \ \ \ \ \\
&=&  \[HK, L\]_A
 \end{eqnarray*}
This finishes the proof.
\qed

We would like to translate the approximate version of the three subgroup lemma into the notation of almost commutators.  The problem we are facing is that the almost centralizer of a subgroup is not necessarily definable. This leads to our next chapter, where we analyze groups which satisfy a chain condition on centralizer up to finite index. We proof that in those groups the almost centralizer of subgroups are definable and thanks to this result we are able to generalize a theorem of Hall of nilpotent subgroups to almost nilpotent subgroups.

\section{$\M$-groups}\label{sec_Mcgroups}
$\Mn$-groups, i.\ e.\ groups for which every descending chain of centralizers stabilizes after finitely many steps has been of great interest to both group and model theorist (see \cite{AltBag} \cite{Bry} \cite{Bry2} \cite{DerWag}  \cite{PoiWag}). In the field of model theory they appear naturally as definable groups in stable theories. Passing to groups definable in simple theories, we obtain a weaker chain condition, namely any chain of centralizers, each having infinite index in its predecessor, stabilizes after finitely many steps. We want to study group for which any \emph{definable quotient} (i.\ e.\ a definable subgroup $H$ of $G$ quotient by a definable normal subgroup of $H$) satisfies this chain condition.
\begin{defn}
A group $G$ is called \emph{$\M$-group} if for any two definable subgroup $H$ and $N$, such that $N$ is normalized by $H$ there exists natural numbers $n_{HN}$ and $d_{HN}$ such that any sequence of centralizers $$C_{H/N}(g_0  N) \geq \ldots \geq C_{H/N}(g_0N , \dots g_mN) \geq \ldots $$%
each having index at least $d_{HN}$ in its predecessor has length at most $n_{HN}$.
%
\end{defn}

\begin{remark}\label{Rem_defMccom}
Note that any definable subgroup of $G$, any definable quotient of $G$ and any elementary extension is again an $\M$-group.
\end{remark}

For the rest of the section we work in an $\M$-group $G$. One of the crucial property of subgroups of $G$ is that the iterated almost centralizer are definable which we proof below.

\begin{prop}\label{prop_FCdef}
Let $H$ be an arbitrary subgroup of $G$. Then all iterated almost centralizers $\t C_G^n(H)$ are definable.
\end{prop}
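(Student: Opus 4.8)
The plan is to argue by induction on $n$. The case $n=0$ is immediate, as $\t C^0_G(H)=\{1\}$. For the inductive step set $N:=\t C^n_G(H)$, which is definable by the induction hypothesis. One checks that $N$ is invariant under every automorphism of $G$ fixing $H$ setwise (this propagates up the iterated almost centralizers), so in particular conjugation by an element of $H$ preserves $N$; hence $H$ normalizes $N$ and $H\leq N_G(N)$. Now $N_G(N)$ is a definable subgroup of $G$, hence itself an $\M$-group, and $N_G(N)/N$ is an $\M$-group by Remark \ref{Rem_defMccom}. Writing $\pi\colon N_G(N)\to N_G(N)/N$ for the quotient map and $\bar H:=\pi(H)$, one has $\t C^{n+1}_G(H)=\t C_G(H/N)=\pi^{-1}\big(\t C_{N_G(N)/N}(\bar H)\big)$. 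Thus it suffices to prove the statement for the first almost centralizer of an arbitrary subgroup, after which definability transfers back along the definable map $\pi$.

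So I reduce to the following: for an arbitrary subgroup $H$ of an $\M$-group $G$, the group $\t C_G(H)$ is definable. The main idea is to manufacture a single definable subgroup $D$ that almost contains $H$ and against which membership in $\t C_G(H)$ becomes a uniformly bounded, first-order index condition. The decisive point --- and the step that needs care --- is to apply the chain condition to finite tuples taken from $\t C_G(H)$ itself rather than from $H$. Applying the $\M$-chain condition to the pair $(G,\{1\})$ produces natural numbers $n$ and $d$ bounding the length of any chain $C_G(g_0)\geq C_G(g_0,g_1)\geq\cdots$ whose successive indices are at least $d$. I then greedily choose $g_0,\dots,g_l\in\t C_G(H)$ so that each successive centralizer drops by index at least $d$; the chain condition forces $l\leq n$, and at termination the definable group $D:=C_G(g_0,\dots,g_l)$ satisfies $[D:D\cap C_G(g)]<d$ for every $g\in\t C_G(H)$.

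Two observations then finish the argument. First, since each $g_i$ lies in $\t C_G(H)$ we have $H\ls C_G(g_i)$, and intersecting the finitely many $C_G(g_i)$ gives $H\ls D$. Second, whenever $[D:D\cap C_G(g)]$ is bounded we have $D\ls C_G(g)$, so $H\ls D\ls C_G(g)$ by transitivity of almost containment, whence $g\in\t C_G(H)$. Combining these with the bound from the previous paragraph yields
$$\t C_G(H)=\{\,g\in G:\,[D:D\cap C_G(g)]<d\,\},$$
and the right-hand side is definable because $D$ is definable and the index bound $<d$ is expressible by a first-order formula. The genuine obstacle throughout is the non-definability of $H$, which forbids controlling $[H:C_H(g)]$ directly; the device of optimizing over tuples in $\t C_G(H)$ instead of in $H$ is precisely what converts the $\M$-chain condition into a definable, uniformly bounded description, and it is here that the hypothesis on $G$ is essential.
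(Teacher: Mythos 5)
Your proposal is correct and follows essentially the same route as the paper: both arguments hinge on applying the $\M$-chain condition to finitely many elements chosen greedily from the almost centralizer itself (rather than from the non-definable $H$), producing a definable group $D$ with $H \ls D$, and then identifying $\t C^{n+1}_G(H)$ with the definable set $\{g : [D : C_D(g/\t C^n_G(H))] < d\}$ via the two inclusions you give. The only presentational difference is that you pass explicitly to the quotient $N_G(N)/N$ before applying the chain condition, whereas the paper works with centralizers modulo $N = \t C^n_G(H)$ directly inside $G$.
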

\proof
For $n$ equals to $0$ there is nothing to show as the trivial subgroup is always definable. Now assume that $\t C^n_G(H)$ is definable for a given natural number $n$.
%
%
Since $G$ is an $\M$-group and as $\t C^n_G(H)$ is a definable subgroup, there are $g_0, \ldots, g_m \in \t C^{n+1}_G(H)$ and $d \in \omega$ such that for all $h \in \t C^{n+1}_G (H)$:
$$\left[ \bigcap_{i=0}^{i=m} C_G(g_i/ \t C^n_G(H)) : \bigcap_{i=0}^{i=m} C_G(g_i /\t C^n_G(H))  \cap C_G(h / \t C^n_G(H))\right] < d$$
Let $D$ be equal to the definable group $\bigcap_{i=0}^{i=m} C_G(g_i/\t C^n_G(H))$. 
Then the following set is definable.
$$ S := \left\{g \in G : \left[ D:  C_D(g/ \t C^n_G(H))\right] < d \right\}$$
We show that $S = \t C^{n+1}_G(H)$. The inclusion $\t C^{n+1}_G(H) \subset S$ is obvious by choice of the $g_i$'s and $d$. So let $g \in S$. Then we may compute:
\begin{eqnarray*}
[ H : C_H(g/ \t C^n_G(H))] & \leq &\left[H: H\cap D\right] \cdot \left[ H\cap D : C_{H \cap D}(g/ \t C^n_G(H))\right] \\
&\leq &\left[H: H\cap D \right] \cdot \left[ D :  C_D(g/ \t C^n_G(H))\right]< \infty
\end{eqnarray*}
Thus $g$ belongs to $\t C^{n+1}_G(H)$. Hence $\t C^{n+1}_G(H)$ is equal to $S$ and whence definable.
\qed

\begin{remark}\label{rem_FCdef}
Note that all the groups mentioned in the lemma above are stabilized by any automorphism which fixes set-wise $H$. So, if $H$ is an $A$-invariant group, they are indeed definable over $A$. Moreover, for any (type-, ind-) definable (resp. $A$-invariant) subgroup $H$, the iterated almost center of $H$ is (type-, ind-) definable (resp. $A$-invariant).
\end{remark}

 \subsection{Almost commutator and the three-subgroup lemma}\

A consequence of the definability of the almost centralizer in $\M$-groups is that the almost commutator is ``well behaved''. In this subsection, we establish the three subgroup lemma in terms of the almost commutator in $\M$-groups. This enables us to generalize results from \cite{Bry} to our context. 

As  $G$ is an $\M$-group, we have that $H \ls \t C_G(K/\t C_G(L))$ if and only if $\[H, K, L\]$ is trivial. With this equivalence, we may phrase Theorem \ref{thm_3sl1} for $\M$-groups as below:

\begin{cor}\label{Cor_TSL} Let $H$, $K$ and $L$ be three $A$-ind-definable subgroups of $G$ which normalize each other. Then for any $M$ which is an intersection of $A$-definable normal subgroups of $G$, we have that
$$\[H, K, L\] \leq M \mbox{ and }\[K, L, H\] \leq M  \mbox{ imply }\[L, H, K\] \leq M .$$
\end{cor}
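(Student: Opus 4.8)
The plan is to deduce the statement directly from the approximate three-subgroup lemma (Theorem \ref{thm_3sl1}), together with the equivalence valid in any $\M$-group that $\[X,Y,Z\]$ is trivial if and only if $X \ls \t C_G(Y/\t C_G(Z))$. Everything hinges on upgrading that ``triviality'' equivalence to the ``$\leq M$'' form in the statement, and I would do this by first reducing to a single definable $M$ and then passing to the quotient by $M$.

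First I would reduce to the case where $M$ is a single $A$-definable normal subgroup of $G$. Writing $M=\bigcap_\beta M_\beta$ with each $M_\beta$ an $A$-definable normal subgroup, the conclusion $\[L,H,K\]\leq M$ is equivalent to $\[L,H,K\]\leq M_\beta$ for every $\beta$, while the two hypotheses immediately yield $\[H,K,L\]\leq M_\beta$ and $\[K,L,H\]\leq M_\beta$ for each $\beta$. Hence it is enough to prove the implication for a single $A$-definable normal subgroup $M$, which I now assume.

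Next I would pass to the quotient $\overline{G}=G/M$. Since $M$ is definable and normal, $\overline{G}$ is again an $\M$-group by Remark \ref{Rem_defMccom}, it inherits the saturation and homogeneity hypotheses, and the images $\overline{H},\overline{K},\overline{L}$ are $A$-ind-definable subgroups of $\overline{G}$ normalizing each other. The key dictionary is that forming almost centralizers modulo $M$ agrees with the quotient: a direct check of the definitions shows that $\t C_G(Z/M)$ is exactly the full preimage of $\t C_{\overline{G}}(\overline{Z})$, which is in particular definable and normal in $G$ (by Remark \ref{rem_FCdef} applied in the $\M$-group $\overline{G}$). Combining this with Corollary \ref{Cor_EqCenCom}, and with Corollary \ref{cor_HHleqK} to replace bounded index by inclusion (legitimate since $\t C_G(L/M)$ is definable), I obtain
$$\[H,K,L\]\leq M \iff \overline{H}\ls \t C_{\overline{G}}\bigl(\overline{K}/\t C_{\overline{G}}(\overline{L})\bigr),$$
the intermediate equivalences being $\[H,K,L\]\leq M \iff \[H,K\]\ls \t C_G(L/M) \iff \[H,K\]\leq \t C_G(L/M) \iff H\ls \t C_G(K/\t C_G(L/M))$, and likewise for the two cyclic permutations of $H,K,L$.

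Finally I would feed this into Theorem \ref{thm_3sl1} inside $\overline{G}$. The hypotheses $\[H,K,L\]\leq M$ and $\[K,L,H\]\leq M$ translate into $\overline{H}\ls \t C_{\overline{G}}(\overline{K}/\t C_{\overline{G}}(\overline{L}))$ and $\overline{K}\ls \t C_{\overline{G}}(\overline{L}/\t C_{\overline{G}}(\overline{H}))$, so the theorem gives $\overline{L}\ls \t C_{\overline{G}}(\overline{H}/\t C_{\overline{G}}(\overline{K}))$, which by the same dictionary is precisely $\[L,H,K\]\leq M$. I expect the main obstacle to lie not in any single logical step but in the bookkeeping of this dictionary: checking carefully that taking almost centralizers modulo $M$ commutes with the projection $G\to\overline{G}$, so that the nested modded expression $\t C_G(K/\t C_G(L/M))$ genuinely corresponds to the unmodded $\t C_{\overline{G}}(\overline{K}/\t C_{\overline{G}}(\overline{L}))$, and that the definability needed to apply Corollaries \ref{Cor_EqCenCom} and \ref{cor_HHleqK} is in force, both of which rely on $\overline{G}$ being an $\M$-group.
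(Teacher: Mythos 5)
Your proposal is correct and takes essentially the same route as the paper: the paper likewise reduces to a single definable normal subgroup $M_i$ in the intersection, invokes Theorem \ref{thm_3sl1} working modulo $M_i$ (the quotient being again an $\M$-group), translates between almost commutators and almost centralizers to get the implication for each $M_i$, and concludes for the intersection. The only cosmetic difference is that you pass explicitly to the quotient group $G/M$ and spell out the dictionary via Corollaries \ref{Cor_EqCenCom} and \ref{cor_HHleqK}, whereas the paper phrases the identical argument as ``working modulo $M_i$'' inside $G$.
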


\proof
Let $M$ be equal to the intersection of definable subgroups $M_i$ with $i < \kappa$. For any $i$ less than $\kappa$, we may work in the group $G$ modulo $M_i$. which is a quotient of an $\M$-group by a normal definable group and so an $\M$-group as well. Hence, Theorem \ref{thm_3sl1} (working modulo the definable group $M_i$) yields that
$$H \ls \t C_{G}(K/\t C_{G}(L/M_i)) \mbox{ and } K\ls \t C_{G}(L/\t C_{G}(H/M_i))$$
imply
$$L \ls \t C_{G}(H/\t C_{G}(K/M_i)).$$
Which we can translate to
$$\[H, K, L\] \leq M_i \mbox{ and }\[K, L, H\] \leq M_i  \rightarrow \[L, H, K\] \leq M_i $$
So the statement is true for any $M_i$ and hence for the intersection.
\qed

Using symmetry of the almost centralizer, the three-subgroup-lemma and the definablily of the almost centralizer, we may generalize a theorem due to Hall  \cite[Satz III.2.8]{Hup}  for the ordinary centralizer to our context.

\begin{prop}\label{thm_AlCenCompli}
Let $N_0 \geq N_1 \geq \dots \geq N_m \geq \dots$ be a descending sequence of definable subgroups of $G$ normal in $N_0$, and let $H$ be an ind-definable subgroup of $G$. Suppose that for all $i \in \mathbb N$, we have $H \ls \t C_G (N_i/N_{i+1})$. We define for $ i \in \mathbb N$,
$$H_i := \bigcap_{k \in \mathbb N} \t C_H(N_k / N_{k+i}).$$
Then we have that for all positive natural numbers $i$ and $j$, the group $H_i$ is almost contained in $\t C_G(H_j / H_{i+j})$ and $H$ is almost contained in $\t C^i_G(H / \t C_G( N_{j-1} /N_{i+j}))$ and therefore $\[\tg_{i+1} H, N_{j-1} \] \leq N_{i+j}$.

\end{prop}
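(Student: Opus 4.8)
The plan is to reduce all three assertions to inequalities between almost commutators and then to run the classical Hall induction, with the ordinary three-subgroup lemma replaced by its approximate version, Corollary \ref{Cor_TSL}. First I would record the two translations that make this possible. Since $G$ is an $\M$-group, Proposition \ref{prop_FCdef} and Remark \ref{rem_FCdef} guarantee that each almost centralizer $\t C_G(N_k/N_{k+\ell})$ is definable, so all the moduli occurring below are intersections of normal definable subgroups and Corollaries \ref{Cor_EqCenCom} and \ref{cor_HHleqK} apply; by the same definability the $H_i$ are intersections of definable subgroups, which is what Lemma \ref{Lem_ComBas}(2) needs. Passing to $N_G(N_0)\cap\bigcap_k N_G(N_k)$ and using that $H$ normalises each $N_k$ (as is implicit in the hypotheses), I may assume the relevant subgroups normalise one another, so that Corollary \ref{Cor_TSL} and Lemma \ref{Lem_ComBas} are available. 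The hypothesis $H\ls\t C_G(N_i/N_{i+1})$ then reads $\[H,N_i\]\leq N_{i+1}$ (Corollary \ref{Cor_EqCenCom}), and since $H_i\leq\t C_H(N_k/N_{k+i})\leq\t C_G(N_k/N_{k+i})$ for every $k$, the same corollary yields the \emph{key bound} $\[H_i,N_k\]\leq N_{k+i}$ for all $k$.

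For the first assertion I would prove $\[H_i,H_j\]\leq H_{i+j}$, which by Corollary \ref{Cor_EqCenCom} is equivalent to $H_i\ls\t C_G(H_j/H_{i+j})$. As $H_{i+j}=H\cap\bigcap_k\t C_G(N_k/N_{k+i+j})$, and as $\[H_i,H_j\]\leq H_i\leq H$ by Lemma \ref{Lem_ComBas}(2), it suffices to show $\[\,\[H_i,H_j\],N_k\,\]\leq N_{k+i+j}$ for every $k$ and then upgrade the resulting $\ls$ to $\leq$ via Corollary \ref{cor_HHleqK}. Here I apply the approximate three-subgroup lemma in the arrangement whose conclusion is the outer commutator $\[\,\[H_i,H_j\],N_k\,\]$; the two auxiliary arrangements are $\[\,\[H_j,N_k\],H_i\,\]$ and $\[\,\[N_k,H_i\],H_j\,\]$. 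Using the key bound together with monotonicity (Lemma \ref{Lem_ComBas}(1)) and symmetry, the first is $\leq\[N_{k+j},H_i\]=\[H_i,N_{k+j}\]\leq N_{k+i+j}$ and the second is $\leq\[N_{k+i},H_j\]=\[H_j,N_{k+i}\]\leq N_{k+i+j}$, so Corollary \ref{Cor_TSL} with $M=N_{k+i+j}$ gives the desired outer bound.

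For the remaining two assertions I would establish $\[\tg_{i+1}H,N_{j-1}\]\leq N_{i+j}$ directly by induction on $i$, again through Corollary \ref{Cor_TSL}; writing $k=j-1$ the claim is $\[\tg_{i+1}H,N_k\]\leq N_{k+i+1}$ for all $k$. The base case $\[\,\[H,H\],N_k\,\]\leq N_{k+2}$ is the three-subgroup lemma applied to $H,H,N_k$, whose auxiliary arrangements $\[\,\[H,N_k\],H\,\]$ and $\[\,\[N_k,H\],H\,\]$ are both $\leq\[N_{k+1},H\]\leq N_{k+2}$ by the hypothesis. For the step, $\tg_{i+2}H=\[\tg_{i+1}H,H\]$, and the auxiliary arrangements $\[\,\[H,N_k\],\tg_{i+1}H\,\]$ and $\[\,\[N_k,\tg_{i+1}H\],H\,\]$ are controlled by the inductive hypothesis (the first via $\[\tg_{i+1}H,N_{k+1}\]\leq N_{k+i+2}$, the second via $\[\tg_{i+1}H,N_k\]\leq N_{k+i+1}$) and the hypothesis, so Corollary \ref{Cor_TSL} closes the induction. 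Finally, the middle assertion $H\ls\t C^i_G(H/\t C_G(N_{j-1}/N_{i+j}))$ is equivalent to $\[\tg_{i+1}H,N_{j-1}\]\leq N_{i+j}$: peeling the iterated almost centralizer one layer at a time via Corollary \ref{Cor_EqCenCom} transforms $H\ls\t C^i_G(H/M)$ successively into $\tg_2H\leq\t C^{i-1}_G(H/M),\dots,\tg_{i+1}H\leq\t C^0_G(H/M)=M$, where Corollary \ref{cor_HHleqK} upgrades almost containment of the almost-commutator terms to genuine containment at each step.

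The main obstacle I anticipate is not a single hard idea but the combined bookkeeping: choosing, at each application of Corollary \ref{Cor_TSL}, the cyclic arrangement whose conclusion is the commutator one actually wants, and tracking the index shifts $k\mapsto k+i,\,k+j,\,k+i+j$ so that every intermediate estimate lands in the correct $N_\bullet$. Running parallel to this is the standing verification that the definability and mutual-normality hypotheses of the invoked corollaries genuinely hold; this is exactly where Proposition \ref{prop_FCdef} is indispensable, since without the definability of the almost centralizers $\t C_G(N_k/N_{k+\ell})$ the moduli would not be intersections of definable groups and neither the approximate three-subgroup lemma nor the centralizer/commutator dictionary of Corollaries \ref{Cor_EqCenCom} and \ref{cor_HHleqK} would be applicable.
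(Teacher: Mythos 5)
Your proposal follows the paper's broad strategy (everything rests on the approximate three-subgroup lemma plus the definability of almost centralizers in $\M$-groups), but its organization is genuinely different. The paper proves the first assertion by using symmetry (Theorem \ref{Thm_sym}) to set up the two hypotheses of the centralizer-form three-subgroup lemma (Theorem \ref{thm_3sl1}) modulo $N_{k+i+j}$, then obtains the second assertion by chaining the first, $H_1 \ls \t C_G(H_1/H_2) \ls \dots \ls \t C^i_G(H_1/H_{i+1})$, and transferring from $H_1$ to $H$ via the commensurability $H \sim H_1$; the third assertion is read off at the end. You instead work entirely in almost-commutator language, prove the third assertion directly by induction on $i$ via Corollary \ref{Cor_TSL} (this induction is sound: both auxiliary arrangements and the index shifts check out), and recover the second assertion by peeling the iterated almost centralizer; the peeling is legitimate because Corollaries \ref{Cor_EqCenCom} and \ref{cor_HHleqK} are equivalences and the moduli $\t C^\ell_G(H/\t C_G(N_{j-1}/N_{i+j}))$ are definable in an $\M$-group. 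What this buys you: the first assertion becomes a by-product rather than a stepping stone, and you never need the $H\sim H_1$ transfer. What the paper's route buys: it avoids the repeated $\ls$-to-$\leq$ upgrades and gets the second assertion essentially for free from the first.

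There is, however, one concrete misstep, in your treatment of the first assertion: you invoke Lemma \ref{Lem_ComBas}(2) to get $\[H_i,H_j\] \leq H_i \leq H$, and Corollary \ref{Cor_EqCenCom} with modulus $H_{i+j}$. Neither applies as stated. Lemma \ref{Lem_ComBas}(2) requires the group to be an \emph{intersection of definable groups}, whereas $H$ and $H_i = H \cap \bigcap_k \t C_G(N_k/N_{k+i})$ are merely ind-definable; in fact $\[H_i,H_j\]$, being by construction an intersection of definable subgroups of $G$, need not be contained in the ind-definable $H$ at all. Likewise $H_{i+j}$ is not an intersection of normal definable subgroups (it carries the ind-definable factor $H$), so the dictionary of Corollary \ref{Cor_EqCenCom} cannot be applied to it directly. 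The patch is short and uses what you already have: your three-subgroup computation gives $\[H_i,H_j\] \leq \t C_G(N_k/N_{k+i+j})$ for every $k$; now Lemma \ref{Lem_SmlSubF} gives $H_i \ls \t C_G(H_j/\[H_i,H_j\])$, and for $g \in H_i$, $h \in H_j$ the element-wise commutator $[g,h]$ lies in $H$, so every $h \in C_{H_j}(g/\[H_i,H_j\])$ satisfies $[g,h] \in H \cap \bigcap_k \t C_G(N_k/N_{k+i+j}) = H_{i+j}$, whence $C_{H_j}(g/H_{i+j})$ has bounded index in $H_j$ and $H_i \ls \t C_G(H_j/H_{i+j})$. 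This replacement of the non-definable modulus $H_{i+j}$ by the definable moduli $\t C_G(N_k/N_{k+i+j})$ is exactly the role of the paper's ``Observe that \dots if and only if for all natural numbers $k$ \dots'' reduction, which the paper performs \emph{before} invoking any commutator--centralizer dictionary.
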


\begin{remark}
The non-approximate version \cite[Satz III.2.8]{Hup} states that for $H_j$ defined as $\bigcap_{i< \omega} C_H(N_i/N_{i+j})$ one has that $[H_i, H_j] \leq H_{i+j}$ and $[\gamma_{i+1} H, N_{j-1}] \leq N_{i+j}$.
\end{remark}

\proof
As $H$ and the $N_i$'s are ind-definable subgroups of an $\M$-group, we have that each $ \t C_H(N_k / N_{k+i})$ is an ind-definable group. Thus $H_i$ is a bounded intersection of ind-definable groups and thus ind-definable. Moreover, by hypothesis we have that $H_1$ is a bounded intersection of  groups which are commensurable with $H$ and whence it is itself commensurable with $H$. As commensurable groups have the same almost centralizer, it is enough to show the second part of the proposition for the group $H_1$. 

Observe that
$$ H_i \ls \t C_G(H_j / H_{i+j}) =\t C_G(H_j /  \bigcap_{k < \omega} \t C_H(N_{k} / N_{k + i+j}))$$
if and only if for all natural number $k$ we have that
$$ H_i \ls \t C_G(H_j / \t C_G(N_{k} / N_{k+i+j})).$$
So it is enough to show the latter result for arbitary $k \in \mathbb N$. So fix some $k$ in $\mathbb N$ and let $n$ and $m$ be two arbitrary natural numbers. By the definition of $H_n$ we have that  $H_n \leq  \t C_G(N_{k+m} / N_{k+m+n})$. Additionally, using that the almost centralizer satisfies  symmetry modulo definable groups, we obtain that $N_{k+m} \ls \t C_G(H_n / N_{k+m+n})$. So, for any three natural numbers $i$, $j$ and $k$ we have that
\begin{eqnarray}\label{Feq3sl}
H_i & \leq & \t C_G(N_{k} / N_{k+i}) \leq  \t C_G(N_{k} / \t C_G(H_j / N_{k+i+j}))
\end{eqnarray}
as well as
\begin{eqnarray*}
H_j & \leq & \t C_G(N_{k} /\t  C_G(H_i / N_{k+j+i})) =  \t C_G(N_{k} /\t  C_G(H_i / N_{k+i+j})).
\end{eqnarray*}
Applying symmetry to the last equation, we obtain:
\begin{eqnarray}\label{Seq3sl}
N_{k}& \ls & \t C_G( H_j /\t  C_G(H_i / N_{k+i+j})).
\end{eqnarray}
Working in $G/N_{k+i+j}$, we can apply the three subgroup lemma to the equalities (\ref{Feq3sl}) and (\ref{Seq3sl}) since all $N_i$'s and all $H_j$'s normalize each other  and obtain
$$H_j  \ls \t C_G(H_i / \t C_G(N_{k} / N_{k+i+j})).$$
As $k$ was arbitrary, this establishes the first part of the theorem.

In particular, we have that
\begin{eqnarray*}
H_1 & \ls &  \t C_G(H_1 / H_2) \ls \t C_G(H_1 / \t C_G(H_1 / H_3) ) = \t C^2_G(H_1/H_3) \\
& \ls & \dots \ls \t C^i_G(H_1/H_{i+1}) \ls  \t C^i_G(H_1/ \t C_G( N_{j-1} /N_{i+j}))
\end{eqnarray*}
As mention in the beginning of the proof, this implies the same almost inclusion for $H$ which finishes the proof.
\qed

Using the previous result and definability of the almost centralizers, we may find an approximate version of \cite[Lemma 2.4]{Bry} in terms of the almost centralizer.

\begin{cor}
Let $H$ be an ind-definable subgroup of $G$. Then for any $0 < i <j $, we have that
$$ H \ls \t C^i_G\left( H \, \Big/\,  \t C_G\left[ \t C^j_G(H) / \t C^{j-i-1}_G(H) \right] \right)$$
\end{cor}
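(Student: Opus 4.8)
The plan is to read the corollary directly off Proposition \ref{thm_AlCenCompli} by feeding that proposition the tower of iterated almost centralizers of $H$ \emph{in reverse order}. Since these form the ascending tower $\t C^0_G(H)\leq \t C^1_G(H)\leq\dots$, the sequence
$$N_k := \t C^{\,j-k}_G(H)\ \ (0\leq k\leq j),\qquad N_k:=\{1\}\ \ (k\geq j)$$
is \emph{descending}, and each $N_k$ is definable by Proposition \ref{prop_FCdef} (so the proposition's definability requirement is met and every $N_k$ is in particular a union of definable sets). One computes $N_0=\t C^j_G(H)$ and $N_{i+1}=\t C^{\,j-i-1}_G(H)$, so the second conclusion of Proposition \ref{thm_AlCenCompli}, applied with its parameter ``$i$'' equal to our $i$ and its parameter ``$j$'' equal to $1$, reads
$$H\ls \t C^i_G\bigl(H\,/\,\t C_G(N_0/N_{i+1})\bigr)=\t C^i_G\Bigl(H\,\big/\,\t C_G\bigl[\t C^j_G(H)/\t C^{\,j-i-1}_G(H)\bigr]\Bigr),$$
which is exactly the asserted inclusion. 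The numerical side conditions hold: our $i$ is positive, the auxiliary parameter $1$ is positive, and the indices $j\geq 0$ and $j-i-1\geq 0$ are non-negative precisely because $i<j$.

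It then remains to check the standing hypotheses of Proposition \ref{thm_AlCenCompli} for this sequence. First I would verify the almost-centralizing condition $H\ls \t C_G(N_k/N_{k+1})$ for every $k$. For $k\geq j$ both terms are trivial and the condition is vacuous; for $0\leq k<j$, writing $n=j-k\geq 1$, it becomes $H\ls \t C_G\bigl(\t C^n_G(H)/\t C^{n-1}_G(H)\bigr)$. Here I would invoke symmetry: by Theorem \ref{Thm_sym}---legitimate because $\t C^{n-1}_G(H)$ is definable and is normalized both by $\t C^n_G(H)$, straight from the definition of the almost centralizer modulo a subgroup, and by $H$, since $H\leq N_G(H)\leq \t N_G(H)$ and the latter stabilizes every iterated almost centralizer---the condition is equivalent to $\t C^n_G(H)\ls \t C_G\bigl(H/\t C^{n-1}_G(H)\bigr)$. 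But the right-hand side is by definition $\t C^n_G(H)$ itself, so this reduces to the trivial containment $\t C^n_G(H)\ls \t C^n_G(H)$.

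The remaining, and genuinely delicate, hypothesis is that the $N_k$ be normal in $N_0=\t C^j_G(H)$, i.e.\ that the iterated almost centralizers $\t C^m_G(H)$ with $m\leq j$ normalize one another. Consecutive terms are harmless: $\t C^{m+1}_G(H)=\t C_G(H/\t C^m_G(H))\leq N_G(\t C^m_G(H))$ directly from the definition. More is true for the almost normalizer: if $g\in\t N_G(H)$ then $H^g\sim H$, and since the (iterated) almost centralizer depends only on the commensurability class of its argument one gets $\t C^m_G(H)^g=\t C^m_G(H^g)=\t C^m_G(H)$, so $\t N_G(H)$ normalizes the whole tower.

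I expect the main obstacle to be upgrading these two observations to the full statement that $\t C^j_G(H)$ normalizes every lower term $\t C^m_G(H)$---equivalently, that the tower of iterated almost centralizers is a normal series---because an arbitrary element of $\t C^j_G(H)$ need not lie in $\t N_G(H)$ (it only almost-centralizes $H$ modulo $\t C^{j-1}_G(H)$, which does not force $H^g\sim H$). This is precisely where the definability coming from the $\M$-group hypothesis (Proposition \ref{prop_FCdef}) and the $N_G(H)$-invariance of the tower must be combined to pin down the normal-series structure; once that is in hand, the two verifications above let Proposition \ref{thm_AlCenCompli} fire with parameters $(i,1)$ and deliver the corollary.
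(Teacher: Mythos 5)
Your proposal follows essentially the same route as the paper's own proof, so the comparison is mostly bookkeeping. The paper also derives the corollary by feeding Proposition \ref{thm_AlCenCompli} the reversed tower of iterated almost centralizers: it sets $N_k = \t C_G^{2j-1-k}(H)$ for $k < 2j-1$ and $N_k = \{1\}$ afterwards (all definable by Proposition \ref{prop_FCdef}), asserts without further comment that $H \ls \t C_G(N_k/N_{k+1})$, and then quotes the proposition with parameters $(i,j)$ to obtain $H \ls \t C^i_G(H/\t C_G(N_{j-1}/N_{i+j}))$, which is the desired inclusion; your shorter tower $N_k=\t C_G^{j-k}(H)$ used with the proposition's parameters $(i,1)$ is just a more economical re-indexing of the same instantiation and yields the identical conclusion. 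Your verification of the hypothesis $H\ls \t C_G(N_k/N_{k+1})$ via Theorem \ref{Thm_sym}, reducing it to the tautology $\t C^n_G(H)\ls \t C_G(H/\t C^{n-1}_G(H))=\t C^n_G(H)$, is exactly the argument the paper leaves implicit, and it is correct. Concerning the obstacle you flag at the end: you are right that Proposition \ref{thm_AlCenCompli} formally requires each $N_k$ to be normal in $N_0$, hence that $\t C^j_G(H)$ (for the paper's indexing, even $\t C^{2j-1}_G(H)$) normalize all lower iterated almost centralizers, and that only the consecutive normalization $\t C^{m+1}_G(H)\leq N_G(\t C^m_G(H))$ together with invariance under conjugation by $N_G(H)$ comes for free. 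But the paper does not address this point either; its proof consists essentially of the single sentence that one ``may apply'' the proposition to this sequence. So this is an unstated assumption shared by the paper's proof (indeed in a stronger form there, since its tower is taller), not a defect specific to your argument; modulo that shared point, your proof is complete and coincides with the paper's.
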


\proof
For $k < 2j-1$, we let $N_k = \t C_G^{2j-1-k}(H)$ and for $k \leq 2j-1$, we let $N_k$ be the trivial group. As $G$ is an $\M$-group, all $N_k$ are definable. Additionally, we have that $H \ls \t C_G(N_k/ N_{k+1})$. So we may apply Theorem \ref{thm_AlCenCompli} to the ind-definable subgroup $H$ and the sequence of definable groups $N_i$. This gives us that
$$H  \ls \t C^i_G(H / \t C_G( N_{j-1} /N_{i+j})) =\t C^i_G(H / \t C_G( \t C_G^{j}(H)/ \t C_G^{j-i-1}(H)))  $$
\qed

Using the new notion of almost commutator enables us state the previous lemma in this terminology which resembles more to the ordinary result. 
\begin{cor}\label{cor_Comij}
Let $H$ be an $A$-ind-definable group of the $\M$-group $G$. Then for any $0< i < j$, we have that
$$\[(\t\gamma_{i+1}H)_A, \t C^j_G(H)\]_A \leq \t C_G^{j-i-1}(H).$$
\end{cor}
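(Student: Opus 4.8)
The plan is to read off the result from the preceding corollary, which already gives
$$H \ls \t C^i_G\!\left(H \,\Big/\, \t C_G\!\left[\t C^j_G(H)/\t C^{j-i-1}_G(H)\right]\right),$$
and then to convert this almost-centralizer inclusion into the desired almost-commutator inequality in two layers. Write $M := \t C_G(\t C^j_G(H)/\t C^{j-i-1}_G(H))$. First I would record that $M$ is definable: it is a first almost centralizer in the $\M$-group $G/\t C^{j-i-1}_G(H)$ (a definable quotient, hence an $\M$-group), so Proposition \ref{prop_FCdef} applies; likewise every iterated almost centralizer occurring below is definable. The goal then splits into an \emph{inner} translation, rewriting $\tg_{i+1}H \leq M$ as the statement, and an \emph{outer} translation, extracting $\tg_{i+1}H \leq M$ from the displayed inclusion.

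The inner translation is a single application of Corollary \ref{Cor_EqCenCom}: since $\t C^{j-i-1}_G(H)$ is definable, the containment $\tg_{i+1}H \ls \t C_G(\t C^j_G(H)/\t C^{j-i-1}_G(H)) = M$ is equivalent to $\[\tg_{i+1}H, \t C^j_G(H)\]_A \leq \t C^{j-i-1}_G(H)$, which is exactly the claim. For the outer translation I would set $D_0 := M$ and $D_{n+1} := \t C_G(H/D_n)$, so that $\t C^i_G(H/M) = D_i$ and each $D_n$ is again definable (a first almost centralizer in the $\M$-group $G/D_n$). Then I would prove by induction on $k$ that $\tg_{k+1}H \leq D_{i-k}$ for $1 \leq k \leq i$. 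The base case $k=1$ is Corollary \ref{Cor_EqCenCom} applied to the hypothesis $H \ls D_i = \t C_G(H/D_{i-1})$, giving $\tg_2 H = \[H,H\]_A \leq D_{i-1}$. For the inductive step, from $\tg_{k+1}H \leq D_{i-k} = \t C_G(H/D_{i-k-1})$ one has $\tg_{k+1}H \ls \t C_G(H/D_{i-k-1})$, and Corollary \ref{Cor_EqCenCom} (with first argument $\tg_{k+1}H$ and second argument $H$) yields $\tg_{k+2}H = \[\tg_{k+1}H, H\]_A \leq D_{i-k-1}$. Taking $k=i$ gives $\tg_{i+1}H \leq D_0 = M$, which is what the inner translation requires.

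The one point requiring care is that $H$ need not be normal in $G$, so the subgroups $\t C^k_G(H)$, $\tg_{k+1}H$, $M$ and the $D_n$ are in general only invariant under $N_G(H)$, not normal in $G$. I would therefore apply Corollary \ref{Cor_EqCenCom} relative to $N_G(H)$, that is, with the family $\F$ restricted to the $N_G(H)$-invariant $A$-definable subgroups; this is precisely the invariance built into the definition of $\[\cdot,\cdot\]_A$, so all the subgroups above lie in the relevant family and the equivalences go through verbatim. I expect this invariance bookkeeping to be the only genuine subtlety: the chain-condition content has already been absorbed into the preceding corollary and into Proposition \ref{prop_FCdef}, so what remains is just the two-layer unfolding via Corollary \ref{Cor_EqCenCom}.
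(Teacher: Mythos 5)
Your proof is correct and takes essentially the same route as the paper: the paper likewise peels off the iterated almost centralizer $\t C^i_G(H/M)$ one layer at a time, using definability of almost centralizers in $\M$-groups together with the correspondence of Corollary \ref{Cor_EqCenCom} (via Lemma \ref{Lem_SmlSubF}) to pass from $X \ls \t C_G(Y/L)$ to $\[X,Y\]_A \leq L$, and then applies the same translation once more to obtain $\[(\t\gamma_{i+1}H)_A, \t C^j_G(H)\]_A \leq \t C_G^{j-i-1}(H)$. Your explicit induction on the groups $D_n$ and your remark about restricting $\F$ to $N_G(H)$-invariant definable subgroups merely make precise what the paper compresses into ``iterating this process'' and ``by the same argument.''
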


\proof
We have that
$$H \ls \t C^i_G\left(H\big/ \t C_G\left(\t C^j_G(H)/ \t C_G^{j-i-1}(H)\right) \right) = \t C_G\left(H / \t C_G^{i-1}\left(H / \t C_G\left(\t C^j_G(H)/ \t C_G^{j-i-1}(H)\right) \right)\right)$$
Since the iterated almost centralizer is a definable group, this yields that
$$ \[H,H\]_A \leq \t C_G^{i-1}\left(H/ \t C_G\left(\t C^j_G(H)/ \t C_G^{j-i-1}(H)\right) \right)$$
Iterating this process gives us
$$ (\tg_i H)_A \leq \t C_G\left(H/ \t C_G\left(\t C^j_G(H)/ \t C_G^{j-i-1}(H)\right) \right).$$
As the almost centralizer $ \t C_G\left(\t C^j_G(H)/ \t C_G^{j-i-1}(H)\right) $ is definable, this yields the following
$$(\t \gamma_{i+1}H)_A \leq \t C_G\left(\t C^j_G(H)/ \t C_G^{j-i-1}(H)\right).$$
By the same argument, we obtain the final inequation:
$$[(\t\gamma_{i+1}H)_A, \t C^j_G(H)\] \leq \t C_G^{j-i-1}(H).$$
\qed

In the next lemma, we use the approximated three subgroup lemma in terms of the almost commutator to generalize \cite[Lemma 2.5]{Bry} to our context. 

\begin{lemma} Let $H$ and $K$ be two $A$-ind-definable subgroups of $G$ with $K  \trianglelefteq H$ and $l$ be a natural number. If
$$\t C_G((\t \gamma_tK) _A)\sim \t C_G((\t \gamma_tH)_A) \ \ \ t = 1, \dots, l$$
then $ \t C^l_G(K) \sim \t C^l_G(H)$.
\end{lemma}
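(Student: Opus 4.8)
The plan is to induct on $l$. The base case $l=1$ is exactly the hypothesis at $t=1$, since $\t C^1_G(K)=\t C_G(K)$ and $\tg_1 K=K$. Before the inductive step I dispose of the trivial inclusion: because $K\leq H$, the almost centralizer is monotone in both arguments (as in the index computation of Proposition \ref{prop_FCdef}), so a straightforward induction shows $\t C^{l}_G(H)\leq \t C^{l}_G(K)$ for every $l$, \emph{without} using the hypothesis. Thus everything reduces to the almost-inclusion $\t C^l_G(K)\ls \t C^l_G(H)$; writing $W:=\t C^l_G(K)$ and $Z_m:=\t C^m_G(H)$ (all definable by Proposition \ref{prop_FCdef}), this is equivalent by Corollary \ref{Cor_EqCenCom} to $\[W,H\]\leq Z_{l-1}$.

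Two facts feed the argument. First, from the definition of $W$ together with the inductive hypothesis $\t C^{l-1}_G(K)\sim Z_{l-1}$ and Corollary \ref{cor_HHleqK}, one gets $\[W,K\]\leq Z_{l-1}$. Second, Corollary \ref{cor_Comij} applied to $K$ with $i=l-1$, $j=l$ gives $\[W,\tg_l K\]=1$, and the hypothesis at $t=l$, namely $\t C_G(\tg_l K)\sim\t C_G(\tg_l H)$, upgrades this to $\[W,\tg_l H\]=1$. These are the top and bottom of a descending induction on $s=l,l-1,\dots,1$ whose running statement is $\[W,\tg_s H\]\leq Z_{l-s}$; its last case $s=1$ is precisely the goal.

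The engine of the descending step is the approximate three-subgroup lemma (Corollary \ref{Cor_TSL}) applied to the triple $(\tg_s H,K,W)$, crucially with the \emph{normal} subgroup $K$ in the middle slot. Normality of $K$ in $H$ gives $\[\tg_s H,K\]\leq\[\tg_s H,H\]=\tg_{s+1}H$, so the term $\[\tg_s H,K,W\]$ is bounded by the previous step $\[W,\tg_{s+1}H\]\leq Z_{l-s-1}$ via Lemma \ref{Lem_ComBas}; the term $\[K,W,\tg_s H\]=\[\[W,K\],\tg_s H\]$ is bounded using $\[W,K\]\leq Z_{l-1}$ and Corollary \ref{cor_Comij} (in the form $\[\tg_s H,Z_{l-1}\]\leq Z_{l-s-1}$, the extreme index $s=1$ being the defining relation of the tower). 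Corollary \ref{Cor_TSL} then yields $\[\[W,\tg_s H\],K\]\leq Z_{l-s-1}$, i.e. $\[W,\tg_s H\]$ almost centralizes $K$ modulo $Z_{l-s-1}$. What remains is to convert ``almost centralizes $K$'' into ``almost centralizes $H$'' through the modulus $Z_{l-s-1}$, turning this into the desired $\[W,\tg_s H\]\leq Z_{l-s}$.

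This last passage is the main obstacle. When the modulus is trivial it is immediate, and in fact the triple $(W,H,K)$ already settles $l=2$ transparently: there the three-subgroup lemma gives $\[\[W,H\],K\]=1$, whence $\[W,H\]\leq\t C_G(K)\sim\t C_G(H)=Z_1$ by the hypothesis at $t=1$. For a nontrivial modulus $Z_m$ the bridge from $K$ to $H$ must itself be proved by a nested induction on $m$, reapplying Corollary \ref{Cor_TSL} to peel off one level of the central tower at a time and bottoming out at $m=0$ on $\t C_G(K)\sim\t C_G(H)$. Making this nested induction go through --- keeping all the indices in Corollary \ref{cor_Comij} and the three-subgroup lemma aligned, and checking that the auxiliary terms of the form $\[\tg_2 H,\cdot\]$ which appear stay below the required level of the tower --- is where the real work lies, and it is exactly here that the full strength of the hypothesis, $\t C_G(\tg_t K)\sim\t C_G(\tg_t H)$ at \emph{every} level $t\leq l$ rather than only at the extremes, is consumed.
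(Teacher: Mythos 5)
Your skeleton is the paper's proof: the outer induction on $l$, the reduction of everything to the claim $\[W,\tg_s H\]\leq Z_{l-s}$ for $s=l,\dots,1$, the base case $s=l$ via Corollary \ref{cor_Comij} applied to $K$ together with the hypothesis at $t=l$, and the three-subgroup step producing $\[\[W,\tg_s H\],K\]\leq Z_{l-s-1}$ are, after the re-indexing $t=l-s$, exactly the claim and its proof in the paper. But your argument stops at what you correctly identify as the crux: converting $\[\[W,\tg_s H\],K\]\leq Z_{l-s-1}$ into $\[W,\tg_s H\]\leq Z_{l-s}$. The nested induction on the modulus that you propose (repeatedly applying Corollary \ref{Cor_TSL} to peel the tower one level at a time) is never carried out, and it is not what is needed; as written, the proof is incomplete precisely at its load-bearing step.

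The missing idea is a tower switch, which you in fact already performed in your $l=2$ aside without recognizing it as the general mechanism. From $\[\[W,\tg_s H\],K\]\leq Z_{l-s-1}$, use the outer induction hypothesis (the lemma at levels below $l$, giving $Z_m \sim \t C^m_G(K)$ for all $m\leq l-1$) together with Corollary \ref{cor_HHleqK} to replace the modulus by its $K$-side counterpart: $\[\[W,\tg_s H\],K\]\leq \t C^{l-s-1}_G(K)$. Now Corollary \ref{Cor_EqCenCom} gives $\[W,\tg_s H\]\ls \t C_G(K/\t C^{l-s-1}_G(K))=\t C^{l-s}_G(K)$: once the modulus sits on the $K$-tower, ``almost centralizing $K$'' is by definition membership in the next level of the $K$-tower, so no further three-subgroup applications are required. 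Switching back, $\t C^{l-s}_G(K)\sim Z_{l-s}$ by the outer induction hypothesis again (as $l-s\leq l-1$), and Corollary \ref{cor_HHleqK} upgrades almost containment to containment. This two-line bridge closes your descending induction, and it is exactly where the hypothesis at the intermediate levels $t$ enters. A secondary caveat: your claim that $\t C^l_G(H)\leq \t C^l_G(K)$ holds for free by monotonicity is not justified for $l\geq 2$ under the paper's definitions, since membership in $\t C^l_G(K)$ requires normalizing $\t C^{l-1}_G(K)$, which an element of $\t C^l_G(H)$ has no reason to do; the paper obtains this direction instead from $\[K,\t C^l_G(H)\]\leq \t C^{l-1}_G(H)\sim \t C^{l-1}_G(K)$ followed by the same switch-and-upgrade, yielding $\t C^l_G(H)\ls \t C^l_G(K)$, which is all that commensurability requires.
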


\proof
We show this by induction on $l$. The case $l$ equals $1$ is trivial. So let's assume that the lemma holds for $l-1$. We need to prove the following intermediate result which is done by induction as well, this time simultaneously on $l$ and $t$:
\begin{claim}
$\[ \t \gamma_{l-t} (H), \t C_G^l(K) \] \leq \t C_G^t(H)$ holds for all $t= 0, \dots , l-1$.
\end{claim}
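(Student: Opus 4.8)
The plan is to prove the displayed claim by the announced simultaneous induction on $(l,t)$ and then to read off the lemma from its top instance $t=l-1$. Indeed, Claim at $t=l-1$ reads $\[H,\t C^l_G(K)\]\le \t C^{l-1}_G(H)$ (here $\tg_1 H=H$); by Corollary \ref{Cor_EqCenCom} together with the symmetry of the almost centralizer (Theorem \ref{Thm_sym}, applicable since all iterated almost centralizers are definable in an $\M$-group by Proposition \ref{prop_FCdef}), this is equivalent to $\t C^l_G(K)\ls \t C_G(H/\t C^{l-1}_G(H))=\t C^l_G(H)$. Since $K\le H$ forces $\t C^l_G(H)\le \t C^l_G(K)$ by the routine monotonicity of the almost centralizer in its first argument, I would conclude $\t C^l_G(K)\sim \t C^l_G(H)$, which is the lemma for $l$. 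Throughout the inner induction I may therefore assume the lemma for all $s<l$, equivalently $\t C^s_G(K)\sim \t C^s_G(H)$ for $s<l$.

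For the base case $t=0$ I would argue as follows. Corollary \ref{cor_Comij} applied to $K$ (with the indices $i=l-1$, $j=l$) gives $\[\tg_l K,\t C^l_G(K)\]\le \t C^0_G(K)=\{1\}$, so by symmetry $\t C^l_G(K)\ls \t C_G(\tg_l K)$. The hypothesis $\t C_G(\tg_l K)\sim \t C_G(\tg_l H)$ then yields $\t C^l_G(K)\ls \t C_G(\tg_l H)$, and a second application of symmetry gives $\[\tg_l H,\t C^l_G(K)\]\le\{1\}=\t C^0_G(H)$, as required. (Here $l\ge 2$, the case $l=1$ being the trivial instance of the lemma.)

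The inductive step ($1\le t\le l-1$, assuming Claim at $t-1$) is the heart of the argument and splits into three moves. First I establish the $K$-version $\[\tg_{l-t}K,\t C^l_G(K)\]\le \t C^t_G(H)$: Corollary \ref{cor_Comij} for $K$ gives $\le \t C^t_G(K)$, and the outer hypothesis $\t C^t_G(K)\sim \t C^t_G(H)$ upgrades this, via Corollary \ref{cor_HHleqK}, to a bound by $\t C^t_G(H)$. Second, I transfer the first entry from $K$ to $H$ by applying the approximate three subgroup lemma (Corollary \ref{Cor_TSL}) modulo $M=\t C^{t-1}_G(H)$ to the triple $(\t C^l_G(K),\,K,\,\tg_{l-t}H)$, whose members normalize one another. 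Its two hypotheses hold: from $\[\t C^l_G(K),K\]\le \t C^{l-1}_G(K)\sim \t C^{l-1}_G(H)$ (definition of $\t C^l_G(K)$ plus the outer hypothesis) and $\[\tg_{l-t}H,\t C^{l-1}_G(H)\]\le \t C^{t-1}_G(H)$ (Corollary \ref{cor_Comij} for $H$) one gets $\[[\t C^l_G(K),K],\tg_{l-t}H\]\le M$; while $\[K,\tg_{l-t}H\]\le \tg_{l-t+1}H$ (monotonicity, Lemma \ref{Lem_ComBas}(1)) combined with Claim at $t-1$ gives $\[[K,\tg_{l-t}H],\t C^l_G(K)\]\le M$. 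The lemma then outputs $\[[\tg_{l-t}H,\t C^l_G(K)],K\]\le \t C^{t-1}_G(H)$; that is, $D:=\[\tg_{l-t}H,\t C^l_G(K)\]$ almost centralizes $K$ modulo $\t C^{t-1}_G(H)$.

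The third and final move, where the hypothesis genuinely bites and which I expect to be the main obstacle, upgrades ``$D$ almost centralizes $K$'' to ``$D$ almost centralizes $H$'', both modulo $\t C^{t-1}_G(H)$. The previous move says $D\ls \t C_G(K/\t C^{t-1}_G(H))$; since $\t C^{t-1}_G(K)\sim \t C^{t-1}_G(H)$ and almost centralizers modulo commensurate subgroups coincide, this centralizer equals $\t C^t_G(K)$, so $D\ls \t C^t_G(K)$. The outer induction hypothesis at level $t$ (legitimate as $t<l$) now gives $\t C^t_G(K)\sim \t C^t_G(H)$, whence $D\ls \t C^t_G(H)$, and Corollary \ref{cor_HHleqK} improves this to $D\le \t C^t_G(H)$, which is exactly Claim at $t$. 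The points needing care are the bookkeeping of the index shifts in the repeated use of Corollary \ref{cor_Comij} — including the boundary value $t=l-1$, where $\tg_{l-t}H=H$ and the estimates hold directly by definition of the iterated almost centralizer — and checking that the groups involved are normal in a common overgroup so that Corollary \ref{Cor_TSL} applies; the conceptual crux is that the $K\to H$ transfer is not a formal consequence of the three subgroup lemma but genuinely consumes the commensurability $\t C^t_G(K)\sim \t C^t_G(H)$ supplied by the outer induction.
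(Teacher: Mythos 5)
Your argument is correct and is essentially the paper's own proof: the same simultaneous induction, the identical base case $t=0$ (Corollary \ref{cor_Comij} applied to $K$ with $i=l-1$, $j=l$, followed by the lemma's hypothesis $\t C_G(\tg_l K)\sim \t C_G(\tg_l H)$), and the identical inductive step, feeding the same two estimates into the approximate three subgroup lemma (Corollary \ref{Cor_TSL}) and finishing with the outer commensurability hypothesis and Corollary \ref{cor_HHleqK}. Two cosmetic remarks: your ``move 1'' is never actually used afterwards, and where you invoke the unproved (though true and routine) fact that commensurate moduli yield the same almost centralizer, the paper stays within its established toolkit by first upgrading $\[\tg_{l-t}H,\t C^l_G(K)\]\ls \t C^{t-1}_G(K)$ to an actual inclusion via Corollary \ref{cor_HHleqK} and only then applying Corollary \ref{Cor_EqCenCom} to land in $\t C^t_G(K)$.
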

\proof
Let $l$ be equal to $1$ and therefore $t$ has to be equal to $0$. Then the claim states that $\[H, \t C_G(H)\]$ is trivial which is true by definition. 

Now suppose that the claim holds for all value smaller than $l$. 
Replacing $H$ by $K$, $i$ by $l-1$, and $j$ by $l$ in Corollary \ref{cor_Comij}, we obtain $\[\t\gamma_{l}K, \t C^l_G(K)\] = 1$. This implies that $\t C_G^l(K)$ is almost contained in $\t C_G(\t \gamma_l(K))$ which is by the hypothesis of the lemma is commensurable with $\t C_G(\t \gamma_l(H))$. Thus $\t C_G^l(K) \ls \t C_G(\t \gamma_l(H))$ or in other words $\[\t \gamma_l(H), \t C_G^l(K)\] =1 $. Hence the claim holds for $t$ equals to $0$. \\
Let $t< l$ and assume the claim is true for any value smaller than $t$. Then
$$\[\[\t \gamma_{l-t}H, K\], \t C_G^l(K)\] \leq \[\[\t \gamma_{l-t}H, H\], \t C_G^l(K)\] = \[\t \gamma_{l-t+1}H, \t C_G^l(K)\] \leq \t C^{t-1}_G(H)$$
and
$$\[ \t \gamma_{l-t}H,\[K, \t C_G^l(K)\] \]\leq \[ \t \gamma_{l-t}H, \t C_G^{l-1}(K)\] = \[ \t \gamma_{l-t}H, \t C_G^{l-1}(H)\] \leq \t C_G^{t-1} (H)$$

Thus by Corollary \ref{Cor_TSL} we have
$$\[\[\t \gamma_{l-t}H,\t C_G^l(K) \],K \]  \ls \t C_G^{t-1} (H)$$
By induction hypothesis on $t$, we have that  $\t C_G^{t-1} (H)$ is commensurable with $\t C_G^{t-1} (K) $ and so $ \[\[\t \gamma_{l-t}H,\t C_G^l(K) \],K \]$ is almost contained in $\t C_G^{t-1} (K)$.
As $\t C_G^{t-1} (K)$ is $A$-definable, using Corollary \ref{cor_HHleqK}, we obtain that
$$\[\[\t \gamma_{l-t}H,\t C_G^l(K) \],K \]  \leq \t C_G^{t-1} (K)$$
Thus $\[\t \gamma_{l-t}H,\t C_G^l(K) \]$ is almost contained in $ \t C_G^{t} (K)$ which is commensurable with $ \t C_G^{t} (H)$ by the initial induction hypothesis. Again by Corollary \ref{cor_HHleqK} almost contained can be replaced by contained, which gives us that
$$\[\t \gamma_{l-t}H,\t C_G^l(K) \] \leq \t C_G^{t} (H)$$
which finishes the proof of the claim.
\qed$_{(\mbox{claim})}$

Now taking $t$ equals to $l-1$, we obtain $ \[ H, \t C_G^l(K)\] \leq \t C_G^{l-1}(H)$ which implies that $\t C_G^l(K)$ is almost contained in $\t C_G^l(H)$. On the other hand, we have $ \[K, \t C_G^l(H)\] \leq \t C_G^{l-1}(H)$ which by induction hypothesis is commensurable with $\t C_G^{l-1}(K)$. This implies that  $\t C_G^l(H)$ is almost contained in $\t C_G^l(K)$. Combining these two results, we obtain that $\t C_G^l(K)$ is commensurable with $\t C_G^l(H)$ which finishes the proof. \qed

\subsection{Passing to definable groups with the same properties}\label{sec_env} \

We want to proof that subgroups of any $\M$-group which are abelian, (almost) nilpotent or (almost) solvable are almost contained in subgroups which admit the same algebraic property. For this, the crucial property of subgroups of $\M$-groups is that the iterated almost centralizer are definable. Recall that we fixed an $\M$-group $G$.

The following results are needed to analyze envelops of subgroups of such groups. Some of them can be found in \cite{cm_defenv} for the FC-centralizer of an arbitrary subgroups.
\begin{lemma}\label{lem_norFC} \cite[Lemma 2.1]{cm_defenv}
Let $H$ an $A$-invariant subgroup of $G$, $N$ an $A$-invariant subgroup of $H$ and $n \in \omega$. Then the following holds:
\begin{enumerate}
\item $H$ normalizes $\t C_G(H)$.
\item If $H$ is normal in $G$ ($N \leq H \trianglelefteq G$) and $g \in G$. Then $ C_G(g/N) \leq C_G(g/H)$.
\item For any $h \in H$, we have $C_H(h/ \t C_n(G))=C_H(h/ (\t C_n(G) \cap H))$
\item $H \cap \t C_n(G)$ is a subgroup of $\t C_n(H)$.
\end{enumerate}
\end{lemma}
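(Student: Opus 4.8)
The plan is to establish the four items in the order given and then to derive (4) from (1)--(3) by induction on $n$. Items (1) and (2) are direct consequences of the definitions. For (1), I would fix $h \in H$ and $g \in \t C_G(H)$; since conjugation by $h$ is an automorphism of $H$, one checks that $C_H(hgh^{-1}) = h\, C_H(g)\, h^{-1}$, and this conjugate has the same index in $H = hHh^{-1}$ as $C_H(g)$ does in $H$. Hence $H \sim C_H(g)$ forces $H \sim C_H(hgh^{-1})$, so $hgh^{-1} \in \t C_G(H)$ (the ambient condition $g \in N_G(\{1\})=G$ being vacuous), which is exactly the assertion that $H$ normalizes $\t C_G(H)$. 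For (2), I would unwind that $x \in C_G(g/N)$ means $x$ normalizes $N$ and $[x,g] \in N$; since $N \le H$ and $H \trianglelefteq G$, the element $x$ automatically normalizes $H$ and $[x,g] \in N \subseteq H$, so $x \in C_G(g/H)$.

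For (3), the crucial point is that for $x, h \in H$ the commutator $[x,h]$ again lies in $H$. Consequently $[x,h] \in \t C_n(G)$ holds if and only if $[x,h] \in \t C_n(G) \cap H$. Since $\t C_n(G)$ is characteristic in $G$, it is normalized by $H$, and so is $\t C_n(G)\cap H$; thus membership in $C_H(h/\t C_n(G))$ and in $C_H(h/(\t C_n(G)\cap H))$ requires the same normalization together with the same commutator condition, and the two groups coincide.

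For (4), I would argue by induction on $n$, the case $n=0$ being trivial since both sides reduce to $\{1\}$. Assuming $H \cap \t C_n(G) \le \t C_n(H)$, take $x \in H \cap \t C_{n+1}(G)$, that is $G \sim C_G(x/\t C_n(G))$. Intersecting with $H$ keeps the index bounded, since $[H : C_H(x/\t C_n(G))] = [H : H\cap C_G(x/\t C_n(G))] \le [G : C_G(x/\t C_n(G))]$. By (3) this centralizer equals $C_H(x/(\t C_n(G)\cap H))$; by the induction hypothesis $\t C_n(G)\cap H \le \t C_n(H)$, and since $\t C_n(H)$ is normal in $H$, item (2) applied inside $H$ gives $C_H(x/(\t C_n(G)\cap H)) \le C_H(x/\t C_n(H))$. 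Hence $[H : C_H(x/\t C_n(H))]$ is bounded, i.e.\ $H \sim C_H(x/\t C_n(H))$, so $x \in \t C_{n+1}(H)$, completing the induction.

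The only genuinely delicate step is (4): it is where one must chain (3), the induction hypothesis, and (2) in the correct order, and check that boundedness of the index survives both the intersection with $H$ and the passage to the larger modulus $\t C_n(H)$. Items (1)--(3) are essentially bookkeeping with the definitions; the conceptual content is the observation in (3) that commutators of elements of $H$ stay in $H$, which is precisely what allows one to replace the ambient almost center $\t C_n(G)$ by its trace $\t C_n(G)\cap H$ on $H$ and thereby feed the induction hypothesis into the inductive step.
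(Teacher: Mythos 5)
Your proposal is correct and follows essentially the same route as the paper: the conjugation argument for (1), the observation that commutators of elements of $H$ stay in $H$ for (3), and the induction for (4) that chains the index bound $[H : C_H(x/\t C_n(G))] \le [G : C_G(x/\t C_n(G))]$ with (3), the induction hypothesis, and (2) in exactly the paper's order. The only difference is cosmetic: you phrase (2) and (3) via the commutator characterization of $C_H(h/N)$, where the paper performs the equivalent coset-by-coset element computations.
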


\proof

\begin{enumerate}
\item Let $h \in H$. Since conjugation by $h^{-1}$ is an automorphism, we can compute $[H : C_H(g^h)] = [ H^{h^{-1}}: (C_H(g)^h)^{h^{-1}}] = [H: C_H(g)]$. Hence $[H : C_H(g^h)]$ is bounded if and only if $[H: C_H(g)]$ is bounded. So $g^h \in \t C_G(H)$ for any $g \in \t C_G(H)$ and $h \in H$. Thus $H$ normalizes $\t C_G(H)$.
\item Fix arbitrary elements $a \in  C_G(g/N)$ and $h \in H$. We have to find $h' \in H$ such that $a \cdot gh = gh' \cdot a$. As $a \in  C_G(g/N)$, there exists $k \in N$ such that $a\cdot g= gk \cdot a$. Hence $a \cdot gh = gk \cdot a \cdot h = gk \cdot aha^{-1} \cdot a $. Note that $aha^{-1} \in H$ as $H$ is normal in $G$. Thus, as $N$ is a subgroup of $H$, $h' = k \cdot ah a^{-1}$ is an element of $H$ and we can conclude.
\item We show the two inclusions. So, let $k$ be an element of $C_H(h/ \t C_n(G))$. For any $ f \in  \t C_n(G) \cap H$, there exists some $f' \in  \t C_n(G)$ such that $k\cdot h f = hf' \cdot k$. Furthermore, as $k \cdot hf$ belongs to $H$, the element $hf' \cdot k $ does as well. Since $h$ and $k$ are elements of $H$, we can deduce that $f' \in H$. Hence $k \in C_H(h/ (\t C_n(G) \cap H))$. On the other hand, let $k$ be an element of $C_H(h /(\t C_n(G) \cap H))$ and $f \in \t C_n(G)$. Since $k \in C_H(h/ (\t C_n(G) \cap H))$, there exists some $e \in \t C_n(G) \cap H$ such that $ k \cdot  h  = h e \cdot  k$. We can compute:

$$ k \cdot  h f = h e \cdot  k \cdot f = h( e\cdot  k f k^{-1})\cdot  k  $$
\
Now $e\cdot  k f k^{-1}$ is an element of $\t C_n(G)$ as it  is normalized by $H$. As $f$ was arbitrary, we may conclude that $k$ is an element of  $C_H(h /\t C_n(G))$.
\item We show this by induction on $n$. So if $h \in H \cap \t Z(G)$, then $[G : C_G(h)]$ is bounded. As $[H : C_H(h)] = [G \cap H : C_G (h) \cap H] \leq [G: C_G(h)]$, $h$ belongs also to $\t Z(H)$. Now, suppose it is true for $n$. Let $h \in H \cap \t C_{n+1}(G)$. Hence, the index $[G: C_G(h/ \t C_n(G))]$ is bounded and as before $[H: C_H(h/ \t C_n(G))]$ is bounded as well. As $h$ is an element of $H$ and by (3) of this lemma, the group $C_H(h/ \t C_n(G))$ is the same as $C_H(h/ (\t C_n(G) \cap H))$. By induction hypothesis, $\t C_n(G) \cap H\leq \t C_n(H)$. Note that all considered groups are normal subgroups of $H$. Hence by (2) we have $C_H(h/ (\t C_n(G) \cap H))\leq C_H(h/ \t C_n(H)) $. Finally, we can conclude that the index $[H : C_H(h/ \t C_n(H))]$ is bounded. So, $h \in \t C_{n+1}(H)$.\qed
\end{enumerate}

\subsubsection{Abelian groups}

\begin{defn}
A group $H$ is called \emph{finite-by-abelian} if there exists a finite normal subgroup $F$ of $H$ such that $H/F$ is abelian.
\end{defn}

\begin{defn}
A group $H$ is an \emph{almost abelian group} if the centralizer of any of its element has finite index in $H$. If the index of these elements can be bounded by some natural number we call it an \emph{bounded almost abelian group}.
\end{defn}

\begin{remark}
If we consider a definable almost abelian subgroup of an $\aleph_0$-saturated group, we can always bound the index of the centralizers by some natural number $n$ by compactness. Hence, any definable almost abelian subgroups is a \emph{bounded almost abelian group}. Additionally, note that the almost center of any group is always an almost abelian group.
\end{remark}

We will need the following classical group theoretical result which is a theorem of Bernhard H. Neumann.

\begin{fact}\label{fac_neum}  \cite[Theorem 3.1]{bhn}.
Let $H$ be a bounded almost abelian group. Then its derived group is finite. In particular, $H$ is finite-by-abelian.
\end{fact}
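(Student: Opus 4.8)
The plan is to reduce the statement to the finiteness of the derived subgroup $H'$ and then to treat the latter by the Dietzmann--Neumann circle of ideas. First I would unwind the hypothesis: saying that $H$ is bounded almost abelian with bound $n$ means precisely that $[H:C_H(h)]\le n$ for every $h\in H$, that is, every conjugacy class $h^{H}$ has at most $n$ elements. Once one knows $|H'|<\infty$, the ``in particular'' clause is immediate: the derived subgroup $H'$ is characteristic, hence a finite normal subgroup, and $H/H'$ is abelian by definition, so $F=H'$ witnesses that $H$ is finite-by-abelian.

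For the finiteness of $H'$ itself I would work with the set of commutators $K=\{[x,y]:x,y\in H\}$. The elementary observation is that for a fixed $x$ the values $[x,y]=x^{-1}x^{y}$ range over the coset $x^{-1}x^{H}$, which has exactly $|x^{H}|=[H:C_H(x)]\le n$ elements; thus each element contributes at most $n$ commutators on the left. The target is to promote this to the assertion that the whole set $K$ is finite, with cardinality bounded in terms of $n$ alone, and that its elements have finite order. Granting this, $K$ is a finite normal subset of $H$ (normal because $[x,y]^{g}=[x^{g},y^{g}]$) consisting of torsion elements, so Dietzmann's lemma applies and yields that $\langle K\rangle=H'$ is finite.

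The hard part is exactly this global finiteness, together with the boundedness of orders, of $K$: the per-element bound $\le n$ does not obviously globalise, since $x$ itself ranges over the whole group. I would attack it with the same covering and pigeonhole technology used elsewhere in the paper: because every centralizer $C_H(h)$ has index at most $n$, the group cannot be covered by few cosets of infinite-index subgroups (Fact \ref{Fact_Neu}), and an Erd\H{o}s--Rado style counting of the possible conjugates limits the number of genuinely distinct commutator values. A clean alternative in the finitely generated case is to note that $Z(H)=\bigcap_{i}C_H(x_i)$ over a generating set has finite index and then invoke Schur's theorem; the obstacle there, and the reason it does not pass directly to the general case, is that the resulting index is not controlled by $n$. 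Since the statement is quoted verbatim from Neumann's paper, I would in the end cite \cite[Theorem 3.1]{bhn} for the uniform finiteness of $H'$ and only spell out the finite-by-abelian deduction.
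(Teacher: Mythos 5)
The statement is quoted in the paper as a Fact with no proof at all, only the citation to Neumann's theorem \cite[Theorem 3.1]{bhn}, and your proposal ultimately does the same thing: you defer the genuinely hard content (the uniform finiteness of the derived group) to that citation, adding only the routine observation that a finite derived subgroup witnesses finite-by-abelianness. So your approach matches the paper's, and your supplementary remarks (the per-element bound on commutator values, the Dietzmann-lemma route, and the caveat about Schur's theorem in the finitely generated case) are correct but not required.
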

Now, we are ready to investigate the abelian case. The proof follows the one of the corresponding theorem for simple theories in \cite{cm_wsg}.
\begin{prop}
Every abelian subgroup of an $\M$-group is contained in a definable finite-by-abelian subgroup.
\end{prop}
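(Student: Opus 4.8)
The plan is to produce a \emph{definable} subgroup $D$ with $H\le D$ inside which every element of $H$ has a finite-index centralizer, and then to show that $H$ lands in the almost center $\t Z(D)=\t C_D(D)$, which will be the desired envelope. The point of routing through such a $D$ (rather than through $\t C_G(H)$ directly) is that it yields genuine containment $H\le \t Z(D)$ rather than mere almost-containment.

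First I would exploit the $\M$-chain condition to build $D$. Applying the definition of an $\M$-group to the pair $(G,\{1\})$ furnishes natural numbers $n$ and $d$ bounding the length of any chain of centralizers in $G$ whose successive indices are at least $d$. Starting from $D_0=G$, as long as there is some $h\in H$ with $[D_k : D_k\cap C_G(h)]\ge d$ I set $D_{k+1}=D_k\cap C_G(h)$ for such an $h=h_{k+1}\in H$; by the chain condition this process must halt after at most $n$ steps, at a definable group $D=\bigcap_i C_G(h_i)$. Halting means precisely that $[D:C_D(h)]<d$ for \emph{every} $h\in H$ simultaneously. Moreover $H\le D$: since $H$ is abelian and each $h_i$ lies in $H$, every element of $H$ commutes with every $h_i$, so $H\le C_G(h_i)$ for all $i$, hence $H\le D$.

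It remains to recognize $\t Z(D)$ as a definable finite-by-abelian group containing $H$. Because $H\le D$ and $[D:C_D(h)]<d<\infty$ for each $h\in H$, every element of $H$ almost centralizes $D$, so $H\le \t C_D(D)=\t Z(D)$. The group $\t Z(D)=D\cap \t C_G(D)$ is definable by Proposition \ref{prop_FCdef} and Remark \ref{rem_FCdef}, since $D$ is definable. Being the almost center of a group it is almost abelian, and since it is definable in the saturated group $G$ a compactness argument bounds the indices $[\t Z(D):C_{\t Z(D)}(g)]$ uniformly by some natural number, so $\t Z(D)$ is a \emph{bounded} almost abelian group. Fact \ref{fac_neum} then gives that its derived subgroup is finite, i.\ e.\ $\t Z(D)$ is finite-by-abelian, and $H\le \t Z(D)$ completes the proof.

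I expect the main subtlety to be the bookkeeping in the chain-condition step: one must be sure that the stabilized intersection $D$ satisfies the uniform bound $[D:C_D(h)]<d$ for all $h\in H$ at once, which is exactly what guarantees $H$ is \emph{contained} in (not merely commensurate with) the definable almost abelian group $\t Z(D)$. The passage from `almost abelian' to `bounded almost abelian' is then routine given saturation, and the finiteness of the derived subgroup is quoted directly from Neumann's theorem.
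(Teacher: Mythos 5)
Your proof is correct and follows essentially the same route as the paper: build a definable group $D=\bigcap_i C_G(h_i)$ with $h_i\in H$ via the $\M$-chain condition so that $[D:C_D(h)]<d$ for all $h\in H$, note $H\le D$ by abelianness, conclude $H\le \t Z(D)$, and finish by definability of the almost center (Proposition \ref{prop_FCdef}), boundedness via saturation, and Neumann's theorem (Fact \ref{fac_neum}). The only difference is presentational — you unfold the chain-condition argument iteratively where the paper invokes it in one step — so there is nothing to add.
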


\begin{proof} Let $H$ be an abelian subgroup of the $\M$ group $G$. We consider the family of centralizers of elements of $H$ in the ambient group $G$. As $G$ is an $\M$-group there are elements $h_0, \dots , h_{n-1}$ in $H$ and a natural number $d$ such that for every element $h$ in $ H$, $[C : C \cap C_G(h)] \leq d$ for $C := \bigcap_{i=0}^{n-1} C_G(h_i)$. Note first that $H$ is contained in $C$ as $H$ is abelian.  Thus by the choice of the $h_i$'s we can deduce that $H$ is a subgroup of $\t Z (C)$. Observe that $\t Z ( C)$ is a definable (Proposition \ref{prop_FCdef}) almost abelian group and so its derived subgroup is finite (Fact \ref{fac_neum}). Hence $\t Z(C)$ is a finite-by-abelian subgroup of $G$ which contains $H$.
\end{proof}

\subsubsection{Solvable groups}
\begin{defn}
A group $H$ is \emph{almost solvable} if there exists a normal \emph{almost series} of finite length, i.\,e.\ a finite sequence of normal subgroups $H_0, \dots, H_n$ of $H$ such that
$$H = H_0 \trianglerighteq H_1  \trianglerighteq \dots  \trianglerighteq H_n = \{1\}$$
and such that $H_i /H_{i+1}$ is an almost abelian group for all $i < n$. The least such natural number $n \in \omega$ is called the \emph{almost solvable class} of $G$.
\end{defn}

\begin{defn}
Let $H$ be a group and $S$ be a definable almost solvable subgroup of class $n$. We say that $S$ \emph{admits a definable almost series} if there exists a family of definable normal subgroups $\{S_i: i \leq n\}$ of $S$ such that $S_0$ is the trivial group, $S_n$ is equal to $S$ and $S_{i+1} / S_i$ is almost abelian.
\end{defn}

We recall the following definition.

\begin{defn}
A family $\F$ of subgroups is \emph{uniformly commensurable} if there exists a natural number $d$ such that for each pair of groups $H$ and $K$ from $\F$ the index of their intersection is smaller than $d$ in both $H$ and $K$.
\end{defn}

We proof first that any almost solvable subgroup of $G$ is contained in a definable almost solvable subgroup of the same class and deduce then the result for solvable groups. For this we need the next theorem which is originally due to Schlichting. A proof and the following definable version can be found in \cite[Theorem 4.2.4]{fow_simth}.

\begin{theorem}[Schlichting's Theorem]\label{thm_sch}
Let $H$ be a group and $\mH$ be a family of definable uniformly commensurable subgroups. Then there exists a definable subgroup $N$ of $H$ commensurable with all members of $\mH$ and invariant under all automorphisms of $H$ which stabilize the family $\mH$ setwise. We have $\bigcap_{H \in \mH} H \subset N \subset \langle \mH \rangle$. In fact, $N$ is a finite extension of a finite intersection of elements in $\mH$. In particular, if $\mH$ consists of relatively definable subgroups, then $N$ is relatively definable.
\end{theorem}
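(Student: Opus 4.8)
\proof
The statement is classical and due to Schlichting, the definable refinement being the one recorded in \cite[Theorem 4.2.4]{fow_simth}; the plan is to manufacture a single canonical subgroup out of the whole family and then to check that the construction survives in the definable category. Write $d$ for the uniform commensurability bound, so that $[A : A \cap B] \le d$ and $[B : A \cap B] \le d$ for all $A, B \in \mH$. The two things to achieve are that $N$ be commensurable with every member with a bound depending only on $d$, and that $N$ be \emph{canonical}, so that any automorphism of $H$ stabilizing $\mH$ setwise must fix it.

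First I would isolate a good finite intersection. Among all finite intersections $D = A_1 \cap \dots \cap A_k$ of members that are still commensurable with the family, i.e.\ $[B : B \cap D] \le d$ for every $B \in \mH$ (the singletons $D = A_1$ already qualify), I would pick one for which the index $[A_1 : A_1 \cap D]$ is as large as possible; since these indices are bounded by $d$, the maximum is attained. Maximality forces $D$ to be \emph{stable}: intersecting $D$ with a further member either fails to refine it or destroys commensurability with the family. This pins $D$ down up to commensurability and furnishes the lower bound $\bigcap_{A \in \mH} A \le D$ demanded in the statement.

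The main work, and the genuine obstacle, is to pass from the non-canonical $D$ to an honestly invariant $N$. One cannot simply intersect all members, since the total intersection $\bigcap_{A \in \mH} A$ may have infinite index in each member (take the coordinate kernels $A_i = \ker \pi_i$ in $(\Z/2)^{\omega}$, pairwise commensurable with $d = 2$ yet with trivial total intersection), nor can one take $\langle \mH \rangle$, which need not be commensurable with the members. Instead I would run the averaging argument of Bergman and Lenstra: the images of $D$ under automorphisms of $H$ stabilizing $\mH$ are again stable finite intersections, hence all commensurable with $D$ with a bound depending only on $d$, so they form a uniformly commensurable orbit. Using Fact \ref{Fact_Neu} one controls how this orbit sits inside the bounded-index overgroups of $D$ within $\langle \mH \rangle$, and extracts a \emph{unique} such overgroup $N$ that is fixed by every automorphism stabilizing $\mH$. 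By construction $N$ is a finite extension of the finite intersection $D$, it lies inside $\langle \mH \rangle$, and its commensurability bounds are inherited from those of $D$.

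Finally, for the definable refinement I would observe that every object produced above is definable as soon as $\mH$ is uniformly relatively definable: a finite intersection of uniformly definable subgroups is relatively definable, a finite-index extension of a relatively definable subgroup is relatively definable, and the finitely many cosets witnessing the passage from $D$ to $N$ are recovered definably. Hence $N$ is relatively definable, which is the last clause. The delicate point throughout is the averaging step, where the tension between keeping the commensurability bound under control and achieving exact invariance must be resolved; this is exactly the substance of the Schlichting--Bergman--Lenstra argument.
\qed
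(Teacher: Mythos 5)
First, a point of comparison: the paper does not prove this statement at all --- it quotes Schlichting's Theorem as a known fact, citing \cite[Theorem 4.2.4]{fow_simth} for both the proof and the definable refinement, and then uses it as a black box (in Proposition \ref{thm_FCsol}, Proposition \ref{Lem_DefEnvNil}, and the Fitting section). So there is no proof in the paper to measure your attempt against; the only question is whether your proposal stands on its own as a proof.

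It does not, because the central step is named rather than carried out. Everything before and after your third paragraph is routine or correct (the extremal finite intersection exists, the $(\Z/2)^{\omega}$ example rightly rules out the total intersection, and a finite extension of a definable subgroup is definable as a finite union of cosets). But the heart of Schlichting's theorem --- manufacturing from the non-canonical $D$ a subgroup $N$ that is \emph{exactly} invariant under every automorphism stabilizing $\mH$, while staying commensurable with $D$ --- is dispatched with ``run the averaging argument of Bergman and Lenstra'' plus an appeal to Fact \ref{Fact_Neu} by which ``one controls how this orbit sits inside the bounded-index overgroups of $D$.'' That is a statement of the goal, not an argument, and you yourself identify it as ``exactly the substance of the Schlichting--Bergman--Lenstra argument.'' Note also that Fact \ref{Fact_Neu} (Neumann's covering lemma) is not the engine of that argument: the real proof is an index-counting/join construction in which one takes the subgroup generated by a suitable commensurability-invariant collection of finite intersections and must prove that this join is still a \emph{finite} extension of $D$ --- nothing in your sketch addresses why the orbit of $D$ generates something commensurable with $D$ rather than something much larger, which is precisely the danger (joins of commensurable subgroups need not be commensurable with them). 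Separately, the stability you claim for your extremal $D$ does not follow from your maximality: maximizing $[A_1 : A_1 \cap D]$ only pins down the trace on $A_1$ --- if $D \cap B$ retains the bound $d$, one deduces $A_1 \cap D \cap B = A_1 \cap D$, i.e.\ $A_1 \cap D \leq B$, but not $D \cap B = D$ --- so ``this pins $D$ down up to commensurability'' is unjustified. Since the theorem is in any case imported from the literature in this paper, the honest options are either to cite it, as the author does, or to give the full Bergman--Lenstra construction; the middle ground your proposal occupies is a gap.
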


All proofs in the subsection are analog to the onces given in \cite{cm_defenv} for the corresponding results for simple theories.

\begin{prop}\label{thm_FCsol}
Let $S$ be an almost solvable subgroup of $G$ of class $n$. Then there exists a definable almost solvable group of class $n$ which admits a definable almost series.
\end{prop}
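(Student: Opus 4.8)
The plan is to induct on the almost solvable class $n$, peeling off the bottom term of a normal almost series and using Schlichting's Theorem to replace it by a \emph{definable} subgroup that is still normalized by $S$, so that one may pass to a definable quotient and invoke the induction hypothesis; throughout one produces a definable $T$ with $S \ls T$. For the base case $n=1$ the group $S$ is almost abelian, and I would repeat the argument of the abelian case: as $G$ is an $\M$-group, the family $\{C_G(s):s\in S\}$ has a finite subfamily whose intersection $C=\bigcap_{i<m}C_G(s_i)$ satisfies $[C:C\cap C_G(s)]<d$ for all $s\in S$ and some fixed $d$. Since $S$ is almost abelian each $C_S(s_i)$ has finite index in $S$, so $S\cap C=\bigcap_{i<m}C_S(s_i)$ has finite index in $S$, i.e.\ $S\ls C$; and every element of the finite-index subgroup $S\cap C$ lies in $\t Z(C)$ by the choice of $d$. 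Hence $S\ls \t Z(C)$, where $\t Z(C)$ is a definable (Proposition \ref{prop_FCdef}) almost abelian group, and trivially admits a definable almost series.

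For the inductive step fix a normal almost series $S=S_0\trianglerighteq\dots\trianglerighteq S_n=\{1\}$ and work with the bottom almost abelian term $S_{n-1}$, which is normal in $S$. As above, choose $t_0,\dots,t_{m-1}\in S_{n-1}$ and $d$ with $C=\bigcap_{i<m}C_G(t_i)$ satisfying $[C:C\cap C_G(t)]<d$ for all $t\in S_{n-1}$, so that $S_{n-1}\ls \t Z(C)$. The crucial point is that the $\M$-condition makes the family $\mH=\{C^s:s\in S\}$ \emph{uniformly} commensurable: since $S_{n-1}\trianglelefteq S$ we have $t_i^{\,s}\in S_{n-1}$, whence $[C:C\cap C^s]\le d^{\,m}$ and symmetrically $[C^s:C^s\cap C]\le d^{\,m}$ for every $s\in S$. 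Schlichting's Theorem (Theorem \ref{thm_sch}) then yields a definable subgroup $C^\ast$ commensurable with every member of $\mH$ and invariant under every automorphism stabilising $\mH$ setwise; as conjugation by an element of $S$ permutes $\mH$, the group $C^\ast$ is normalized by $S$. I then set $D:=\t Z(C^\ast)$: a definable almost abelian group which, being canonical in $C^\ast$ (Remark \ref{rem_FCdef}), is normalized by $S$, and $C^\ast\sim C$ together with the choice of $d$ gives $S_{n-1}\ls D$.

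Now $D$ is definable and normal in the definable group $N_G(D)\supseteq S$, so $\bar G:=N_G(D)/D$ is a definable quotient of $G$, hence again an $\M$-group (Remark \ref{Rem_defMccom}). The image $\bar S$ of $S$ carries the images of the $S_i$ as an almost series of length $n$ whose bottom term $\bar S_{n-1}$ is finite (because $S_{n-1}\ls D$); absorbing this finite normal term into the almost abelian quotient above it — finite-by-almost-abelian groups are again almost abelian — shows that $\bar S$ is almost solvable of class at most $n-1$. By the induction hypothesis there is a definable almost solvable group $\bar T$ with $\bar S\ls\bar T$, of class $n-1$ and admitting a definable almost series. Its preimage $T$ in $N_G(D)$ is definable, contains $D$, and satisfies $S\ls T$; prefixing the lifted definable almost series of $\bar T$ by $\{1\}\trianglelefteq D$ produces a definable almost series of length $n$ for $T$, as required.

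The main obstacle is exactly the definabilization of the bottom term while keeping it normalized by $S$: an arbitrary definable envelope of $S_{n-1}$ need not be $S$-invariant, and then one cannot form a definable quotient by it. This is what the combination of the uniform bound $d$ supplied by the $\M$-condition (yielding uniform commensurability of the conjugate family $\mH$) and Schlichting's Theorem overcomes. The remaining points — that quotients and finite extensions of almost abelian groups are again almost abelian, and that almost containment is preserved under passing to $\bar G$ and pulling back — are routine.
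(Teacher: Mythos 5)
Your proof is correct, and it is organized genuinely differently from the paper's, even though the engine is the same in both: the $\M$-chain condition produces a finite intersection of centralizers $C$ with a uniform bound $d$, normality of the relevant term of the series in $S$ makes the conjugate family $\{C^s : s \in S\}$ uniformly commensurable, Schlichting's Theorem then gives an $S$-invariant definable group, and its almost center supplies the definable almost abelian layer. The paper runs this engine in a bottom-up recursion inside $G$: it constructs an ascending chain of definable groups $Z_0 \trianglelefteq Z_1 \trianglelefteq \dots \trianglelefteq Z_n$ with $S_{n-k} \leq Z_k$, using relative centralizers $C_N(b/Z_{k-1})$ (i.e.\ working modulo the previously built layer rather than in an actual quotient group) and carrying all normalizer conditions along explicitly. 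You instead induct on the class $n$, definabilize only the bottom term $S_{n-1}$, pass to the honest definable quotient $N_G(D)/D$, and apply the induction hypothesis there; this is structurally cleaner, at the cost of having to absorb the finite image of $S_{n-1}$, which you handle correctly (finite-by-FC is FC, and FC passes to quotients). The one substantive difference in the output is that your construction yields only $S \ls T$, while the paper's yields $S \leq Z_n$ --- and the paper invokes exactly that stronger form (``containing $H$'') when it deduces the definable solvable envelope theorem. Your weaker conclusion still suffices there, since that theorem only asserts that $H \cap D$ has finite index in $H$; but genuine containment can be recovered inside your argument by the paper's product trick: since $S_{n-1} \ls D$ and $S$ normalizes $D$, the product $D S_{n-1}$ is a finite union of cosets of $D$, hence definable, and $\t Z(D S_{n-1})$ is a definable, $S$-invariant, almost abelian group which actually contains $S_{n-1}$ (each $t \in S_{n-1}$ has centralizer of finite index in $D$ by the choice of $d$ and commensurability, hence of finite index in $D S_{n-1}$). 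Quotienting by this group instead, and carrying containment through the induction (the base case is patched the same way, with $S$ in place of $S_{n-1}$), gives $S \leq T$.
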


\begin{proof}
Choose $S \trianglerighteq S_1 \trianglerighteq \ldots \trianglerighteq S_n = \{1\}$ an almost series for $S$. We construct inductively an ascending chain of definable subgroups $Z_0 \trianglelefteq Z_1 \trianglelefteq \ldots \trianglelefteq Z_n$ such that:
\begin{enumerate}
\item $Z_0$ is the trivial group;
\item $Z_i$ is normal in $Z_j$ for all $i \leq j \leq n$;
\item $Z_i$ contains $S_{n-i}$ for every $i \leq n$;
\item $S$ normalizes $Z_i$ for every $i \leq n$;
\item $Z_i / Z_{i-1}$ is an almost abelian group for every $i \leq n$.
\end{enumerate}

We construct the $Z_i$'s recursively. So let $Z_0$ be equal to the trivial group. Now, fix a natural number $k$ greater than $0$ and assume that $Z_0, \ldots, Z_{k-1}$ are constructed and have the desired properties. For $i <k$, let $N_i$ be the normalizer of $Z_i$ which is definable as $Z_i$ is definable. By (2) and (4), for every $j,i < k$ we have that $N_i$ contains $S$ and $Z_j$, hence these are a subgroups of $N := \bigcap_{i < k} N_i$ which is a definable subgroup of an $\M$-group and thus $\M$ as well. So we may work in $N$ from now on, which ensures that all $Z_i$'s which have been chosen are normal in the ambient group. We consider the following family of subgroups of $N$:
$$ \G_{k} = \left\{ C_N(b/ Z_{k-1}) : b \in S_{n-k}\right\}$$
Since $N$ is an $\M$-group, there are $b_0, \ldots, b_l \in  S_{n-k}$ and $d \in \mathbb N$ such that
$$\left[\bigcap_{i=0}^{l} C_N(b_i/ Z_{k-1}) : \bigcap_{i=0}^{l} C_N(b_i/ Z_{k-1} \cap C_G(b/ Z_{k-1})\right] < d$$
for all $ b \in S_{n-k}$. Let
$$C_k := \bigcap_{i=0}^{l} C_N(b_i/ Z_{k-1}).$$
%
%
%
Note that as $C_N(b/Z_{k-1})$ contains $Z_{k-1}$ for all $b \in S_{n-k}$, the group $C_k$ contains $Z_{k-1}$ as well.

By (3), the group $Z_{k-1}$ contains $S_{n-k+1}$ and the quotient $S_{n-k} / S_{n-k+1}$ is an almost abelian group. Hence $Z_{k-1} S_{n-k} / Z_{k-1}$ is an almost abelian group as well. Therefore, as $C_k$ is the finite intersection of centralizer of elements of $S_{n-k}$ in $N$ modulo $Z_{k-1}$, we have that the index $$[Z_{k-1} S_{n-k} / Z_{k-1} :  (Z_{k-1} S_{n-k} \cap C_k )/ Z_{k-1}]$$ is finite.

Since $S_{n-k}$ is a normal subgroup of $S$, for any element $s$ in $S$ the conjugate $C_k^s$ of $C_k$ is a finite intersection of elements in $\G_k$. Thus, the groups $C_k$ and $C_k^s$ are commensurable and their index is bounded by $md$.
So $\mH_k = \{ C_k^s : s \in S\}$ is a family of definable uniformly commensurable subgroups of $G$. So Schlichtings's theorem yields a definable subgroup $D_k$ of $N$ normalized by $S$ and commensurable with $C_k$. Note that Schlichting's theorem also insures that $D_k$ contains $Z_{k-1}$. As $D_k$ and $C_k$ are commensurable and $(Z_{k-1} S_{n-k} \cap C_k )/ Z_{k-1}$ has finite index in $Z_{k-1} S_{n-k} / Z_{k-1}$, the group $(Z_{k-1} S_{n-k} \cap D_k) / Z_{k-1}$ has finite index in $Z_{k-1} S_{n-k} / Z_{k-1}$ as well. We let $B_k$ be the group $D_k S_{n-k} / Z_{k-1}$, which is a finite union of cosets of $D_k / Z_{k-1}$ and therefore definable. Moreover, $B_k$ is commensurable with $C_k/ Z_{k-1}$. Let $Z_k$ be equal to $\t C_{D_k S_{n-k}} (B_k)$ and observe that it is as well definable.

\begin{claim}
\begin{enumerate}
\item $Z_i$ is normal in $Z_k$ for all $i \leq k$;
\item $Z_k$ contains $S_{n-k}$;
\item $S$ normalizes $Z_k$;
\item $Z_k / Z_{k-1}$ is an almost abelian group.
\end{enumerate}
\end{claim}

\proof
\begin{enumerate}
\item As $Z_i$ is a normal subgroup of $N$ which contains $Z_k$, we obtain the result.
\item Let $g \in  S_{n-k}\setminus Z_k$. Then $[D_{k} S_{n-k} : C_{D_{k} S_{n-k}}(g /Z_{k-1})]$ is infinite. Again, as $C_k$ is commensurable with $D_k$ and $[D_{k} S_{n-k} : D_k]$ is finite, the index $[C_k : C_{C_k}(g/ Z_{k-1})]$ has to be infinite as well. Hence, by choice of $C_k$, $g \not\in S_{n-k}$ which yields a contradiction.
\item As $S$ normalizes $D_k$, $S_{n-k}$ and $Z_{k-1}$, it normalizes the group $B_k$ as well as its almost centralizer $\t C_{D_k S_{n-k}} (B_k)$ in $D_k S_{n-k}$ .
\item This is clear by the definition of $Z_k$.
\end{enumerate}
\qed$_{claim}$

This finishes the construction of the $Z_i$'s. So $Z_n$ is a definable almost solvable group with a definable almost series of the same length as $S$ which contains $S$.
\end{proof}




\begin{lemma}\label{Lem_SolSub2n}
Any definable group in an $\aleph_0$-saturated model which is almost solvable of class $n$ and admits a definable almost series has a definable subgroup of finite index which is solvable of class at most $2n$. 
\end{lemma}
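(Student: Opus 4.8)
The plan is to induct on the class $n$, at each stage peeling off the bottom term $S_1$ of the definable almost series and carefully controlling the derived length of a well-chosen definable finite-index subgroup. The guiding observation is that a \emph{single} almost abelian layer already forces derived length $2$ rather than $1$: if $A$ is a definable almost abelian group in an $\aleph_0$-saturated model, then by the remark following the definition of almost abelian $A$ is bounded almost abelian, so by Fact \ref{fac_neum} its derived subgroup $A'$ is finite. Consequently $C_A(A')$ is a definable subgroup of finite index (the conjugation action on the finite set $A'$ has finite image), and since $C_A(A')' \leq A'$ is centralised by $C_A(A')$, the group $C_A(A')$ satisfies $C_A(A')' \leq Z(C_A(A'))$; hence it is nilpotent of class at most $2$, in particular solvable of derived length at most $2$. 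This already settles the base case $n=1$ with $R = C_S(S')$.

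For the inductive step I would fix a definable almost series $\{1\} = S_0 \trianglelefteq S_1 \trianglelefteq \dots \trianglelefteq S_n = S$ with each $S_i$ definable and normal in $S$ and each $S_{i+1}/S_i$ almost abelian, and pass to the quotient $\bar S = S/S_1$, which is again a definable group in an $\aleph_0$-saturated structure. The images $S_{j+1}/S_1$ for $1 \leq j \leq n-1$ form a definable almost series for $\bar S$ of length $n-1$, so the induction hypothesis yields a definable finite-index subgroup $\bar R \leq \bar S$ that is solvable of class at most $2(n-1)$. Let $R_1 \leq S$ be the preimage of $\bar R$; it is definable, of finite index in $S$, contains $S_1$, and satisfies $R_1/S_1$ solvable of derived length at most $2(n-1)$.

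The final and decisive step is to absorb the bottom layer $S_1$. Since $S_1 = S_1/S_0$ is a definable almost abelian group, it is bounded almost abelian, so $S_1'$ is finite by Fact \ref{fac_neum}; moreover $S_1'$ is characteristic in the normal subgroup $S_1$, hence normal in $S$, so $R := C_{R_1}(S_1')$ is a definable subgroup of finite index in $R_1$, and therefore in $S$. I claim $R$ has derived length at most $2n$. Indeed $R/(R \cap S_1)$ embeds into $R_1/S_1$, so $R^{(2(n-1))} \leq R \cap S_1 \leq S_1$; then $R^{(2(n-1)+1)} = [R^{(2(n-1))}, R^{(2(n-1))}] \leq [S_1, S_1] = S_1'$, and since $R$ centralises $S_1'$ the term $U := R^{(2(n-1))}$ satisfies $U' \leq S_1' \leq Z(U)$, whence $U'' = R^{(2n)} = 1$. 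Thus $R$ is a definable finite-index subgroup of $S$ that is solvable of class at most $2n$.

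The main obstacle, and the point to get exactly right, is this last bookkeeping. A naive refinement of the series into ``finite'' and ``abelian'' factors does \emph{not} give solvability, because the finite derived subgroups $S_i'$ need not be solvable. The trick that makes everything work is to centralise the finite group $S_1'$ (legitimate up to finite index precisely because $S_1'$ is finite, by Neumann), which collapses the otherwise uncontrolled bottom derived term into a class-$2$ nilpotent group and so contributes exactly $2$ to the derived length at each of the $n$ stages, yielding the bound $2n$.
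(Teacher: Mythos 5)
Your proof is correct, but it takes a genuinely different route from the paper's. The paper treats all layers of the almost series simultaneously: Neumann's theorem (Fact \ref{fac_neum}) applied to each bounded almost abelian quotient $H_i/H_{i+1}$ makes $[H_i,H_i]/H_{i+1}$ finite, so the centralizer $C_H([H_i,H_i]/H_{i+1})$ is a finite intersection of finite-index centralizers and therefore contains the connected component $H^0$; this gives $[[H_i,H_i],H^0]\leq H_{i+1}$ for every $i$ at once, whence $(H^0)^{(2k)}\leq H_k$ by induction, and a final compactness argument descends from the type-definable group $H^0$ to an honest definable finite-index subgroup that is solvable of class at most $2n$. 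You instead induct on the class $n$, peeling off the bottom layer via the quotient $S/S_1$ and absorbing $S_1$ by centralizing the finite group $S_1'$. The key mechanism is identical in both arguments --- Neumann plus the observation that centralizing a normal subgroup which is finite (modulo the next term) costs only finite index and contributes derived length exactly $2$ per layer --- but the packaging differs, and each version buys something. Yours produces a completely explicit definable finite-index subgroup (an iterated centralizer preimage), never mentions the connected component, and needs no compactness extraction at the end; the price is that the induction hypothesis is applied to the interpretable group $S/S_1$, which strictly speaking requires working in $M^{\mathrm{eq}}$ (still $\aleph_0$-saturated) since the lemma is stated for definable groups --- harmless given how freely the paper uses definable quotients, but it deserves a sentence. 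The paper's version stays entirely inside $H$ and avoids imaginaries altogether, at the cost of manipulating the type-definable subgroup $H^0$ and invoking compactness to return to a definable group. Your closing remark is also well taken: naively refining the series by inserting the finite groups $S_i'$ does not give solvability, since finite groups need not be solvable; the centralizing step is what makes the count come out to $2n$.
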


\begin{proof}  Let $H$ be a definable almost solvable group of class $n$ and suppose that  $H = H_0 \trianglerighteq H_1 \trianglerighteq \ldots \trianglerighteq H_n = \{1\}$ is a definable almost series for $H$, i.\ e.\  every $H_i$ is definable and $H_i/H_{i+1}$ is an almost abelian group. By compactness, these are bounded almost abelian group.
Using Fact \ref{fac_neum} we deduce that the quotient group $[H_i, H_i] / H_{i+1}$ is finite. Moreover, as all $H_i$'s are normal subgroups of $H$, the group $[H_i, H_i] / H_{i+1}$ is normalized by $H$. Hence, for any $h$ in $[H_i,H_i]$ the quotient $[h, H]/  H_{i+1}$ is finite, i.\ e.\ the index of $C_H(h/H_{i+1})$ in $H$ is finite. Hence, the definable group $C_H([H_i, H_i] / H_{i+1})$ is the finite intersection of centralizers which have finite index in $H$ and  whence it has finite index in $H$ as well. Omitting the parameters needed to define  $ C_H ([H_i, H_i] / H_{i+1})$, we conclude that it contains the connected component $H^0$ of $H$. This implies that
$$ [[H_i, H_i ], H^0] \leq H_{i+1}.$$

Now, we show by induction on $k$ that 
$$(H^0)^{(2k)}\leq H_k.$$
Let $k$ be equal to $0$. We obtain that $$(H^0)^{(2)} =  [[H^0, H^0], [H^0, H^0]]  \leq  [[H_0, H_0], H^0] \leq H_{1}.$$ Suppose the statement is true for $k$. Then we compute:
$$(H^0)^{(2k+2)} =[[(H^0)^{(2k)}, (H^0)^{(2k)}], [(H^0)^{(2k)}, (H^0)^{(2k)}]] \leq [[H_k, H_k], H^0] \leq H_{k+1}  $$
This finishes the induction.

Hence $(H^0)^{(2n)}$ is a subgroup of the trivial group $H_n$, whence it is trivial as well and therefore $H^0$ is solvable of class at most $2n$. This can be expressed by a formula. So it is implied by finitely many of the formulas defining $H^0$. As $H^0$ is the intersection of a directed system of definable subgroups, this also has to be true in one of those groups. Thus, one can find a definable solvable group of class $2n$ which has finite index in $H$.
\end{proof}


Now, we are able to prove the solvable case.
\begin{theorem}
Let $G$ be an $\M$-group and $H$ be an almost solvable subgroup of class $n$. Then there exists a definable solvable group $D$ of class at most $2n$ such that $H \cap D$ has finite index in $H$.
\end{theorem}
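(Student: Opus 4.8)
The plan is simply to chain together the two results established immediately above, namely Proposition~\ref{thm_FCsol} and Lemma~\ref{Lem_SolSub2n}, which between them already contain all the substance; the theorem is their formal combination.

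First I would feed $H$ into Proposition~\ref{thm_FCsol}. Since $H$ is almost solvable of class $n$, that proposition produces a \emph{definable} almost solvable group $Z$ of class $n$ admitting a definable almost series $\{1\}=Z_0\trianglelefteq\dots\trianglelefteq Z_n=Z$. The point I would emphasize is that, inspecting the construction of the $Z_i$ in that proof, property~(3) of the claim (``$Z_i$ contains $S_{n-i}$'') applied with $i=n$ gives $S_0=H\le Z_n=Z$. Thus we obtain a definable almost solvable group $Z$ of the same class $n$, equipped with a definable almost series, and with $H\le Z$.

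Next I would apply Lemma~\ref{Lem_SolSub2n} to $Z$. Since $G$ is sufficiently saturated it is in particular $\aleph_0$-saturated, so the hypotheses of the lemma are met: $Z$ is a definable almost solvable group of class $n$ admitting a definable almost series. The lemma then yields a definable subgroup $D\le Z$ with $[Z:D]<\infty$ and $D$ solvable of class at most $2n$. This $D$ is the group we seek; both its definability and the bound $2n$ on its solvable class are exactly what the lemma delivers.

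It remains only to transfer finiteness of the index from $Z$ down to $H$. Because $H\le Z$ and $[Z:D]<\infty$, the assignment $h(H\cap D)\mapsto hD$ is a bijection from the left cosets of $H\cap D$ in $H$ onto the $D$-cosets meeting $H$, whence $[H:H\cap D]=[HD:D]\le[Z:D]<\infty$. Therefore $H\cap D$ has finite index in $H$, completing the argument. The only step requiring genuine verification is that the envelope furnished by Proposition~\ref{thm_FCsol} actually contains $H$, which is immediate from property~(3) of that construction; so there is no real obstacle left at this stage. The true difficulties — the Schlichting-based envelope construction and the induction $(H^0)^{(2k)}\le H_k$ that bounds the solvable class by $2n$ — were both already dispatched in the two preceding results.
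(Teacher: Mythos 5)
Your proof is correct and takes essentially the same approach as the paper: apply Proposition~\ref{thm_FCsol} to obtain a definable almost solvable envelope of $H$ of class $n$ admitting a definable almost series, then apply Lemma~\ref{Lem_SolSub2n} to it to get a definable solvable subgroup $D$ of class at most $2n$ of finite index. The only (harmless) difference is that you make explicit two points the paper leaves implicit, namely that the envelope from Proposition~\ref{thm_FCsol} contains $H$ and the coset-counting step $[H:H\cap D]\le[Z:D]<\infty$.
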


\begin{proof}
Theorem \ref{thm_FCsol} applied to $H$ gives us a definable almost solvable group $K$ of class $n$ containing $H$ which admits a definable almost series. By Lemma \ref{Lem_SolSub2n}, the group $K$ has a definable subgroup $D$ of finite index which is solvable of class at most $2n$.
\end{proof}

\subsubsection{Nilpotent groups} \

\begin{defn}
A group $H$ is \emph{almost nilpotent} if there exists an \emph{almost central series} of finite length, i.\,e.\ a sequence of normal subgroups of $H$
$$\{1\} \leq H_0 \leq H_1 \leq \dots \leq H_n =H$$
such that $H_{i+1} / H_i$ is a subgroup of $\t Z(H/H_i)$ for every $i \in \{0, \dots, n-1\}$.
We call the least such $n \in \omega$, the \emph{almost nilpotency class} of $H$.
\end{defn}
\begin{remark}
The iterated almost centers of any almost nilpotent group $H$ of class $n$ form an \emph{almost central series} of length $n$.
\end{remark}

In this section we prove that any almost nilpotent subgroup of class $n$ is almost contained in a definable nilpotent group of class at most $2n$. To do so, we need the following fact.

\begin{fact}\cite[10]{RB} \label{fact_daniel}
Let $H$ be a group an $K$ and $N$ be two subgroups of $H$ such that $N$ is normalized by $K$. If the set of commutator
$$\{ [k, n] : k \in K, n\in N\}$$
is finite, then the group $[K,N]$ is finite.
\end{fact}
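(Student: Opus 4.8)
The plan is to set $M=[K,N]$ and prove it is finite by showing, in turn, that it is finitely generated, that it is an FC-group, and finally that both its derived subgroup and its abelianization are finite. Write $T=\{[k,n]:k\in K,\ n\in N\}$, so $|T|<\infty$ by hypothesis and $M=\langle T\rangle$. First I would record the two standard commutator identities $[k,nn']=[k,n'][k,n]^{n'}$ and $[kk',n]=[k,n]^{k'}[k',n]$. The first rearranges to $[k,n]^{n'}=[k,n']^{-1}[k,nn']$, so every $N$-conjugate of an element of $T$ lies in the finite set $T^{-1}T$; the second, together with $K$ normalizing $N$, gives $[k,n]^{k'}=[k^{k'},n^{k'}]\in T$. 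Since also $[k,n]=(n^{k})^{-1}n\in N$, these remarks show $T\subseteq N$, hence $M\leq N$, and that $M$ is normal in $\langle K,N\rangle$.

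Next I would argue that $M$ is an FC-group. By the first identity, for each $t\in T$ and each $d\in M\leq N$ we have $t^{d}\in T^{-1}T$, so the conjugacy class $t^{M}$ has size at most $|T|^{2}$. As the FC-centre of $M$ is a subgroup containing the generating set $T$, the whole of $M$ is its own FC-centre, i.e.\ $M$ is an FC-group. In particular $Z(M)=\bigcap_{t\in T}C_{M}(t)$ is a finite intersection of finite-index subgroups, so $[M:Z(M)]<\infty$; since $Z(M)\leq C_{M}(h)$ for every $h$, the group $M$ is even bounded almost abelian, and Fact \ref{fac_neum} (or Schur's theorem) yields that $M'=[M,M]$ is finite. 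It then remains only to prove that $M^{\mathrm{ab}}=M/M'$ is finite, equivalently that every generator $[k,n]$ is torsion modulo $M'$.

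This last point is the crux, and I would extract it from two relations in $M^{\mathrm{ab}}$. Fix $t=[k,n]$. Since $n^{k}=n[k,n]^{-1}$, the $K$-conjugates of $n$ are parametrized by the commutators $[k,n]$, so $|n^{K}|\leq|T|$ and the $\langle k\rangle$-orbit of $n$ is finite, say $n^{k^{r}}=n$ with $r\geq1$. Expanding $1=[k^{r},n]$ by the second identity gives the telescoping relation $t^{k^{r-1}}\cdots t^{k}\,t=1$, which in $M^{\mathrm{ab}}$ reads $(1+\bar k+\dots+\bar k^{\,r-1})\bar t=0$, where $\bar k$ is the automorphism of $M^{\mathrm{ab}}$ induced by conjugation by $k$. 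On the other hand, computing $[k,nt^{j}]=(t^{k})^{-j}t^{\,j+1}\in T$ and using that these lie in the finite set $T$ forces $(t^{k})^{d}=t^{d}$ for some $d\geq1$, i.e.\ $d(\bar k-1)\bar t=0$ in $M^{\mathrm{ab}}$. Passing to the finite-dimensional $\mathbb{Q}$-vector space $M^{\mathrm{ab}}\otimes_{\mathbb{Z}}\mathbb{Q}$, the second relation forces $\bar k\bar t=\bar t$, whereupon the first collapses to $r\,\bar t=0$ and hence $\bar t=0$. Thus every generator is torsion in $M^{\mathrm{ab}}$, so $M^{\mathrm{ab}}$ is a finitely generated torsion abelian group, hence finite; combined with finiteness of $M'$ this gives $|M|=|M'|\,|M^{\mathrm{ab}}|<\infty$.

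I expect the genuine obstacle to be precisely this torsion step. The FC-structure alone only shows $M$ is centre-by-finite, which still permits an infinite free-abelian centre, so the finiteness of the commutator set must be exploited a second time—through the telescoping identity and the relation $[k,nt^{j}]=(t^{k})^{-j}t^{\,j+1}$—in order to annihilate the free part of $M^{\mathrm{ab}}$. The algebra producing those two identities is routine; the load-bearing idea is to read them simultaneously over $\mathbb{Q}$, where $(\bar k-1)\bar t=0$ turns the length-$r$ relation into $r\bar t=0$.
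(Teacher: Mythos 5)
Your proof is correct, but note that there is no internal proof to compare it against: the paper quotes this statement as a Fact from Baer's article \cite{RB} and never proves it, so any complete argument is necessarily your own route. Checking your details: the identities $[k,nn']=[k,n'][k,n]^{n'}$ and $[kk',n]=[k,n]^{k'}[k',n]$ do give $T^N\subseteq T^{-1}T$ and $T^K\subseteq T$ (via $[k,n]^{k'}=[k^{k'},n^{k'}]$), hence $M=\langle T\rangle\leq N$ is normal in $\langle K,N\rangle$ and is a finitely generated FC-group; $Z(M)=\bigcap_{t\in T}C_M(t)$ then has finite index, so Fact~\ref{fac_neum} (equivalently Schur's theorem) makes $M'$ finite. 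The torsion step, which you rightly identify as the crux, also holds up: $[k,nt^j]=(t^k)^{-j}t^{j+1}$ is a correct computation (using $t=(n^k)^{-1}n$ and $t^j\in N$), pigeonhole on the finite set $T$ gives $j_1<j_2$ with equal values, and the rearrangement to $(t^k)^{j_2-j_1}=t^{j_2-j_1}$ is legitimate since only powers of $t^k$ and powers of $t$ are cancelled against themselves; likewise $n^K\subseteq nT^{-1}$ is finite, so $n^{k^r}=n$ for some $r\geq 1$ and the telescoped relation $t^{k^{r-1}}\cdots t^k t=1$ holds. Conjugation by $k$ is a genuine automorphism of $M$ (both $k$ and $k^{-1}$ normalize $M$), so it acts linearly on $M^{\mathrm{ab}}\otimes_{\mathbb{Z}}\mathbb{Q}$, and reading your two relations there kills the image of $\bar t$; thus $M^{\mathrm{ab}}$ is a finitely generated abelian group generated by torsion elements, hence finite, and $|M|\leq|M'|\cdot|M^{\mathrm{ab}}|<\infty$. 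For context, Baer's original argument and the standard textbook treatments follow the same skeleton---finite generation, FC-structure, Schur, then bounding the orders of the generators modulo $M'$---so your proof is a faithful reconstruction rather than a shortcut; its one nonstandard flourish is packaging the final torsion argument as linear algebra over $\mathbb{Q}$, which is a clean way to combine the relation of length $r$ with the relation $d(\bar k-1)\bar t=0$.
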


Note that in the following proposition the ambient group $L$ is not assumed to be an $\M$-group. It is a generalization of a result by Neumann (Fact \ref{fac_neum}) stating that the derived group of a bounded almost abelian group is finite.

\begin{prop}\label{prop_HKfin} Let $L$ be a group and let $H$ and $K$ be two definable subgroups of $L$ such that $H$ is normalized by $K$. Suppose that 
$$K \leq \t C_L(H) \ \ \mbox{and} \ \ H \leq \t C_L(K).$$ 
Then we have that $[K,H]$ is finite.
\end{prop}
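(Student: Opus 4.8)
The plan is to reduce the statement, via the Dietzmann-type Fact~\ref{fact_daniel}, to showing that the \emph{set} of commutators $X=\{[k,h]:k\in K,\ h\in H\}$ is finite, and then to establish this finiteness by a two-sided Neumann-type argument. First observe that since $K$ normalizes $H$ we have $[k,h]=k^{-1}(h^{-1}kh)\in H$ for all $k\in K$ and $h\in H$, so $X\subseteq[K,H]\le H$. Thus Fact~\ref{fact_daniel}, applied with $N=H$ (which is normalized by $K$), reduces the whole proposition to proving that $X$ is finite.

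Next I would make the hypotheses \emph{uniform}. Because $H$ and $K$ are definable and the ambient structure is saturated, the hypothesis $K\le\t C_L(H)$ means $[H:C_H(k)]<\omega$ for every $k\in K$; the sets $\{k\in K:[H:C_H(k)]\le d\}$ are definable and increase to $K$, so by compactness there is a single $d\in\omega$ with $[H:C_H(k)]\le d$ for all $k\in K$, and symmetrically $[K:C_K(h)]\le d$ for all $h\in H$. Equivalently, every $k\in K$ has at most $d$ conjugates under $H$ and every $h\in H$ has at most $d$ conjugates under $K$. In particular, for a fixed $k$ the set $\{[k,h]:h\in H\}=\{k^{-1}k^{h}:h\in H\}$ is indexed by the $H$-conjugates of $k$ and so has at most $d$ elements (and dually for fixed $h$).

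I would then bound $X$ by passing to finitely generated pieces on the $H$-side. Given a finitely generated subgroup $H_0\le H$, replace it by its $K$-normal closure $H_1:=\langle H_0^{K}\rangle$; this is again finitely generated, since each of the finitely many generators of $H_0$ has at most $d$ conjugates under $K$, and it is normalized by $K$. Now $C_K(H_1)=\bigcap_w C_K(w)$, the intersection over a finite generating set $w$ of $H_1$, is a finite intersection of subgroups of index $\le d$, hence has finite index in $K$. Using the identity $[k_1k_2,h]=[k_1,h]^{k_2}[k_2,h]$ together with the fact that $C_K(H_1)$ centralizes $[K,H_1]\le H_1$ (so that $[k_2,h]=1$ and $[k_1,h]^{k_2}=[k_1,h]$ whenever $k_2\in C_K(H_1)$), one sees that $[k,h]$ depends only on the coset $k\,C_K(H_1)$. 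Hence the commutator set of the pair $(K,H_1)$ is a finite union, over the finitely many such cosets, of sets of size $\le d$, so it is finite, and Fact~\ref{fact_daniel} gives that $[K,H_1]$ is finite. Since every element of $[K,H]$ lies in some such $[K,H_1]$, we obtain $[K,H]=\bigcup_{H_1}[K,H_1]$ as a directed union of finite groups.

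The main obstacle is precisely the last step. The coset count above only yields the bound $[K:C_K(H_1)]\cdot d$, which grows with the number of generators of $H_1$, so the directed union of the finite groups $[K,H_1]$ need not be finite from this alone; and indeed, without the uniformity forced by definability the conclusion genuinely fails (for example $H=K=\bigoplus_{n}S_n$ with $S_n$ finite nonabelian is an FC-group with infinite derived subgroup). What must be proved is therefore a relative, two-sided analogue of Neumann's theorem (Fact~\ref{fac_neum}): that $|[K,H_1]|$ is bounded by a function of $d$ \emph{alone}, independently of the number of generators. I expect this to be the real work, and to require adapting Neumann's method for Fact~\ref{fac_neum} rather than a naive appeal to Schur's theorem, since the index $[K:C_K(H_1)]$ itself is not uniformly bounded. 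Once such a uniform bound $f(d)$ is in hand, $[K,H]$ is a directed union of finite groups each of order $\le f(d)$, hence is itself finite of order $\le f(d)$, which completes the proof.
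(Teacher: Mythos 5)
Your reduction to showing that the commutator set $X=\{[k,h]:k\in K,\ h\in H\}$ is finite via Fact \ref{fact_daniel}, and the compactness argument producing a single bound $d$ for all the indices $[H:C_H(k)]$ and $[K:C_K(h)]$, both match the opening of the paper's proof. But the proposal has a genuine gap, and it is exactly where you locate it: the uniform bound $|[K,H_1]|\le f(d)$, independent of the number of generators of $H_1$, is never proved, and that statement is (by compactness) essentially equivalent to the proposition itself, so deferring it as ``the real work'' leaves the core of the argument missing. Note also that the directed-union strategy cannot be repaired by a saturation argument after the fact: in a saturated model $H$ has uncountably many finitely generated $K$-invariant subgroups $H_1$, so knowing that each $[K,H_1]$ is finite does not even place $X$ inside a countable subgroup, which is what a finite-or-uncountable argument would require.

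The missing idea, and the way the paper fills this hole, is B.~H.~Neumann's maximal-class trick. Choose $k\in K$ whose $H$-conjugacy class has the \emph{maximal} size $d$, say $k_1=k,\ k_2=k^{h_2},\dots,k_d=k^{h_d}$, and set $C=C_K(h_2,\dots,h_d)$, which has finite index in $K$ because $H\le\t C_L(K)$. For any $c\in C$ the element $w=ck$ has the $d$ distinct conjugates $w^{h_i}=ck_i$, and by maximality of $d$ these exhaust $w^H$; hence for every $h\in H$ one gets $h^{-1}wh=ck_i$ and $h^{-1}kh=k_j$ for some $i,j$, so
$$[c,h]=c^{-1}(h^{-1}ckh)(h^{-1}k^{-1}h)=k_ik_j^{-1}$$
lies in the fixed finite set $\{k_ik_j^{-1}:i,j\le d\}$. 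This confines the commutators coming from a finite-index subgroup of $K$ to a finite set with no dependence on any generating set, which is precisely the uniformity your approach lacked. Taking coset representatives $a_1,\dots,a_n$ of $C$ in $K$ and $F=\langle k^H,a_1^H,\dots,a_n^H\rangle$ (finitely generated and normalized by $H$), the identity $[cf,h]=[c,h]^f[f,h]$ puts the whole definable set $X$ inside the countable group $F$; since $L$ is $\aleph_2$-saturated, $X$ is either finite or uncountable, hence finite, and Fact \ref{fact_daniel} concludes. So your instinct that one must adapt Neumann's method rather than invoke a naive Schur-type argument was correct; the proposal simply stops before carrying that adaptation out.
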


\proof
Assume that $L$ is at least $\aleph_2$-saturated. As $K$ is definable and contained in the almost centralizer of $H$ in $L$ there exists a minimal bound for the cardinality of the conjugacy of elements of $K$ in $H$. Let $d$ be this minimal bound and fix some element $k$ of $K$ for which the conjugacy class of $k$ in $H$ has size $d$. Choose $1, h_2, \dots, h_d$ be a set of right coset representatives of $H$ modulo $C_L(k)$. Thus
$$ k_1=k,\ \ k_2 = k^{h_2},\ \ \dots,\ \ k_d = k^{h_d}$$
are the $d$ distinct conjugates of $k$ in $ H$. We let $C$ be equal to the centralizer $C_{K}( h_2, \dots, h_d)$. As $H$ is contained in $\t C_L(K)$, we have that the group $C$ has finite index in $K$. Choose some representatives $a_1, \dots, a_n$ of right cosets of $K$ modulo $C$. Note that their conjugacy classes by $H$ are finite by assumption. Thus the group $F$ defined as $\langle k^{H}, a_1^{H}, \dots, a_n^{H} \rangle$ is finitely generated and additionally normal in $H$. Also we have that $K$ is equal to $CF$.

Now, we want to prove that the group $F$ contains the definable set
$$D:= \{ [g, h] : g \in K, h \in H\}.$$
So let $g$ be an arbitrary element of $K$ and $h$ of $H$. Choose $c$ in $C$, $f$ in $F$, such that $g=cf$.
We have that
$$[g, h] = [cf, h] =[c, h]^f[f, h]$$
As $F$ is normalized by $H$, 
we have that $[f, h]$ belongs to $F$. It remains to show that $[c, h]$ does as well.

Let $w= ck$. 
As $c$ commutes with $h_2, \dots, h_d$
the conjugates
$$ w=c k \ \ w^{h_2},= c k_2  \ \ \dots,\ \  w^{h_d}= c k_d $$
are all different. As $d$ was chosen to be maximal, these have to be all conjugates of $w$. So there are $i$ and $j$ less or equal than $d$, 
such that
$$h^{-1} w h = c k_i \ \mbox{ and } \ h^{-1} kh =  k_j$$
and we have that
$$[c, h] = c^{-1} h^{-1} ch= c^{-1} (h^{-1}c kh) (h^{-1} k ^{-1}h) =  c^{-1} c k_i k_j^{-1} = k_i k_j^{-1}.$$
As all $k_i$'s belong to $F$, the commutator $[c, h] $ does as well and thus we conclude that $D$ is contained in $F$. Since $D$ is definable and $L$ is $\aleph_2$-saturated, it is either finite or uncountable. But the group $F$ is finitely generated and therefore countable. Hence $D$ has to be finite and whence  $[K, H]$ is finite by Fact \ref{fact_daniel}.
\qed

\begin{prop}\label{Prop_ComFin}
We have that the commutator $[\t Z(G), \t C_G(\t Z(G))]$ is finite.
\end{prop}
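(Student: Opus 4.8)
The plan is to deduce this as an immediate application of Proposition \ref{prop_HKfin}, which says that for two \emph{definable} subgroups $H$ and $K$ with $K$ normalized by $H$, if $K \leq \t C_L(H)$ and $H \leq \t C_L(K)$, then $[K,H]$ is finite. So the task reduces to checking that the pair $H := \t Z(G)$ and $K := \t C_G(\t Z(G))$ fits the hypotheses of that proposition, with the ambient group $L$ being $G$ itself.

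First I would verify definability. Since $G$ is an $\M$-group, Proposition \ref{prop_FCdef} guarantees that the iterated almost centralizers are definable; in particular $\t Z(G) = \t Z_1(G) = \t C^1_G(G)$ is definable, and consequently its almost centralizer $\t C_G(\t Z(G))$ is definable as well (again by \ref{prop_FCdef}, or by Remark \ref{rem_FCdef}). Thus both $\t Z(G)$ and $\t C_G(\t Z(G))$ are definable subgroups of $G$, and we may take $L = G$ in Proposition \ref{prop_HKfin}.

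Next I would check the normalization and the two almost-centralizer inclusions. By Lemma \ref{lem_norFC}(1), a group is normalized by its almost centralizer; applied to $\t Z(G)$, this shows that $\t C_G(\t Z(G))$ normalizes $\t Z(G)$, so setting $K = \t C_G(\t Z(G))$ and $H = \t Z(G)$ the hypothesis that $H$ is normalized by $K$ holds. For the inclusions, the containment $K = \t C_G(\t Z(G)) \leq \t C_G(\t Z(G)) = \t C_G(H)$ is trivially an equality. The symmetric inclusion $H = \t Z(G) \leq \t C_G(\t C_G(\t Z(G))) = \t C_G(K)$ follows from symmetry: by construction every element of $K$ almost centralizes $\t Z(G)$, so $H = \t Z(G) \ls \t C_G(K)$, and since $H$ is definable this almost inclusion can be upgraded to genuine inclusion (or one invokes Theorem \ref{Thm_sym} / the symmetry of the almost centralizer directly, noting $K \ls \t C_G(H)$ gives $H \ls \t C_G(K)$).

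The only genuine subtlety, and the step I would treat most carefully, is making sure the inclusions are \emph{literal} inclusions of the form required by Proposition \ref{prop_HKfin} rather than merely almost inclusions $\ls$; the proposition is stated with $\leq$. The containment $K \leq \t C_G(H)$ is an equality and causes no trouble, but for $H \leq \t C_G(K)$ one must argue that every element of $\t Z(G)$ genuinely lies in $\t C_G(\t C_G(\t Z(G)))$. This is where symmetry of the almost centralizer is doing the real work: for each fixed $g \in \t Z(G)$ one checks that $g$ almost centralizes all of $K$, i.e. $C_K(g)$ has bounded (hence finite, by definability and $\M$) index in $K$, which places $g$ in $\t C_G(K)$. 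Once both hypotheses are confirmed, Proposition \ref{prop_HKfin} applies verbatim and yields that $[\t C_G(\t Z(G)), \t Z(G)] = [\t Z(G), \t C_G(\t Z(G))]$ is finite, completing the proof.
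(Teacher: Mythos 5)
Your proposal follows the paper's route exactly: both deduce the result by applying Proposition \ref{prop_HKfin} to $H = \t Z(G)$ and $K = \t C_G(\t Z(G))$, using definability of both groups (Proposition \ref{prop_FCdef}) together with the inclusions $K \leq \t C_G(H)$ (trivial) and $H \leq \t C_G(K)$. However, two of your justifications are off, even though the facts you need are true. First, Lemma \ref{lem_norFC}(1) is cited backwards: it says that $H$ normalizes $\t C_G(H)$, not that $\t C_G(H)$ normalizes $H$; the normalization you actually need holds simply because $\t Z(G) = \t C_G(G)$ is a characteristic, hence normal, subgroup of $G$, so every subgroup of $G$ normalizes it. Second, the claim that $H \ls \t C_G(K)$ can be upgraded to a genuine inclusion ``since $H$ is definable'' is false as a general principle --- a definable group is almost contained in any of its finite-index subgroups without being contained in them --- and symmetry (Theorem \ref{Thm_sym}) can only ever produce almost containment, never literal containment, so it is not what does the real work here. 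The correct argument, which your final element-wise check in effect carries out and which is the paper's one-line justification $\t Z(G) = \t C_G(G) \leq \t C_G(\t C_G(\t Z(G)))$, is simply anti-monotonicity of the almost centralizer in its argument: if $g \in \t C_G(G)$, then $C_G(g)$ has bounded index in $G$, so $C_K(g) = C_G(g) \cap K$ has bounded index in $K$ for \emph{every} subgroup $K \leq G$, whence $g \in \t C_G(K)$. With those two repairs your proof coincides with the paper's.
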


\proof
As $G$ is an $\M$-group, the normal subgroups $\t Z(G)$ and $\t C_G(\t Z(G))$ are definable. As trivially $\t C_G(\t Z(G))$ is contained in itself and $$\t Z(G) = \t C_G(G) \leq \t C_G(\t C_G(\t Z(G))),$$ we may apply Proposition \ref{prop_HKfin} to these two subgroups and obtain the result.
\qed

\begin{prop} \label{Lem_DefEnvNil}
Let $G$ be an $\M$-group and let $H$ an almost nilpotent subgroup of $G$ of class $n$. Then there exists a definable nilpotent subgroup $N$ of $G$ of class at most $2n$  which is normalized by $N_G(H)$ and almost contains $H$.
%
%
\end{prop}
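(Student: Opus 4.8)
My plan is to follow the two-step pattern used for the solvable case, Proposition~\ref{thm_FCsol} together with Lemma~\ref{Lem_SolSub2n}, systematically replacing ``almost series'' by ``almost central series''. First I would produce a definable \emph{almost nilpotent} group $K$ with $H\leq K$, normalized by $N_G(H)$, carrying a definable almost central series; then I would extract from $K$ a definable \emph{genuinely nilpotent} subgroup of class at most $2n$ that almost contains $H$.

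For the envelope I would start from the almost central series $\{1\}=\t Z_0(H)\leq\t Z_1(H)\leq\cdots\leq\t Z_n(H)=H$ given by the iterated almost centers of $H$ (the Remark preceding the statement; note that a short induction using $\t Z_{i}(H)\leq\t C^{i}_G(H)$ places these inside the definable groups $\t C^i_G(H)$). I would then build definable subgroups $Z_0\leq Z_1\leq\cdots\leq Z_n=:K$, all normalized by $N_G(H)$, with $\t Z_i(H)\leq Z_i$ and with each quotient $Z_i/Z_{i-1}$ a bounded almost abelian group lying in $\t Z(K/Z_{i-1})$. The construction is recursive and mirrors Proposition~\ref{thm_FCsol}: at stage $i$ one passes to the intersection of the definable normalizers of the previously built $Z_{<i}$, which is again an $\M$-group; one uses the $\M$-condition to replace the family $\{C_N(b/Z_{i-1}):b\in\t Z_i(H)\}$ by a finite subintersection $C$, which together with its $N_G(H)$-conjugates forms a uniformly commensurable family; one applies Schlichting's Theorem~\ref{thm_sch} to obtain a definable, $N_G(H)$-invariant group commensurable with $C$ and containing $Z_{i-1}$; and one sets $Z_i$ to be the almost center of the resulting definable over-group modulo $Z_{i-1}$, so that $Z_i/Z_{i-1}$ is almost central there by fiat. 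Definability at each step is guaranteed by Proposition~\ref{prop_FCdef} and Remark~\ref{rem_FCdef}.

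The nilpotent core is then extracted as in Lemma~\ref{Lem_SolSub2n}, with one extra use of connectedness that produces the factor $2$. Since each $Z_i/Z_{i-1}$ is a bounded almost abelian group contained in the almost center of $K/Z_{i-1}$, the corollary following Theorem~\ref{Thm_sym} shows that the connected component $K^0$ almost centralizes $Z_i$ modulo $Z_{i-1}$, and Proposition~\ref{prop_HKfin} (the generalization of Neumann's Fact~\ref{fac_neum}) then gives that $[K^0,Z_i]/Z_{i-1}$ is \emph{finite}. As $K^0$ is characteristic in $K$ it is still normalized by $N_G(H)$, and since a finite normal subgroup of the connected group $K^0/Z_{i-1}$ must be central, the finite group $[K^0,Z_i]/Z_{i-1}$ is centralized by $K^0$; that is, $[K^0,K^0,Z_i]\leq Z_{i-1}$. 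An induction on $k$, using the three subgroup lemma for the commutator rearrangements, then yields $\gamma_{2k+1}(K^0)\leq Z_{n-k}$, whence $\gamma_{2n+1}(K^0)\leq Z_0=\{1\}$ and $K^0$ is nilpotent of class at most $2n$. This is exactly where the bound $2n$ originates: upgrading one almost central layer to genuine central layers costs two commutators with $K^0$.

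It remains to arrange $H\ls N$ for an $N$ normalized by $N_G(H)$, and I expect this reconciliation to be the main obstacle. Taking $N:=K^0$ makes $N$ definable, nilpotent of class at most $2n$, and (being characteristic in the $N_G(H)$-invariant $K$) normalized by $N_G(H)$; but a priori $K^0$ may have infinite index in $K$, so $H\leq K$ does not by itself give $H\ls K^0$, and one cannot simply invoke ``finite index'' as in the solvable theorem (which did not demand $N_G(H)$-invariance). Two routes suggest themselves: either make the construction connected from the start, replacing each Schlichting output by its connected component and requiring only $\t Z_i(H)\ls Z_i$ rather than containment, so that $K=Z_n$ is itself connected and $H\ls K=K^0$; or extract by compactness a definable finite-index nilpotent $N_1\leq K$ with $H\ls N_1$ and then feed the uniformly commensurable family of its $N_G(H)$-conjugates back into Schlichting's Theorem~\ref{thm_sch} to regain invariance. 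Balancing connectedness (needed for normalization) against capturing enough of each $\t Z_i(H)$ in a layer (needed for almost-containment) — the delicate analogue of the Schlichting bookkeeping in Proposition~\ref{thm_FCsol} — is the crux; once it is in place, $N=K^0$ has all the required properties.
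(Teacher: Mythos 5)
Your plan correctly identifies its own crux, but the crux is a genuine gap and neither of your proposed repairs closes it. The fatal point is your assertion that ``taking $N:=K^0$ makes $N$ definable'': it does not. The connected component $K^0$ is only the intersection of the (in general infinite) directed system of definable finite-index subgroups of $K$, hence type-definable, while the statement requires $N$ to be definable. The paper meets exactly this obstacle in the solvable case (Lemma~\ref{Lem_SolSub2n}) and escapes by compactness, replacing $H^0$ by a single definable finite-index member of the directed system --- but that is legitimate there only because the solvable envelope theorem asks for no invariance; here the group extracted by compactness is non-canonical, so $N_G(H)$-invariance (the whole point of this proposition, and what Corollary~\ref{Cor_NorNilEnv} consumes) is lost. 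Your route 1 (make every layer connected) just propagates the problem: each $Z_i$, and hence $K$, becomes merely type-definable. Your route 2 fails for an algebraic reason: Schlichting's Theorem~\ref{thm_sch} applied to the $N_G(H)$-conjugates of a definable finite-index nilpotent $N_1\leq K$ returns a group only \emph{commensurable} with $N_1$ --- a finite extension of a finite intersection of conjugates --- and a finite extension of a nilpotent group need not be nilpotent (compare $S_3$ over $A_3$), so both nilpotency and the bound $2n$ evaporate. There is also a secondary gap in your first stage: setting $Z_i/Z_{i-1}$ to be the almost center of the stage-$i$ over-group makes it almost central \emph{there}, but not in $K=Z_n$, since nothing in your construction puts $Z_n$ inside the stage-$i$ over-group; so the chain you build is a priori only an almost (abelian) series, not an almost central series of $K$.

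For contrast, the paper never uses connected components and never defers the invariance problem to the end: it builds, inductively in $i\leq n$, a \emph{decreasing} chain of definable $N_G(H)$-invariant ambient groups $G_i$ with $H\ls G_i$, and inside each step manufactures two genuinely central definable layers at once --- which is exactly where $2n$ comes from. At stage $i$ it first shrinks, using the $\M$-chain condition and Schlichting applied to centralizers of $N_G(H)$-conjugates, to a definable $\b C$ with $\t C_{\b C}(\bH)=\t Z(\b C)$; then Proposition~\ref{Prop_ComFin} (a consequence of your key tool, Proposition~\ref{prop_HKfin}) says the commutator $[\t Z(\b C),\t C_{\b C}(\t Z(\b C))]$ is \emph{finite}, and intersecting with its centralizer turns this finite group into a central layer and $\t Z(\b C)$ into a central layer modulo it. So the finiteness result is deployed inside each layer to produce honest central factors while definability and $N_G(H)$-invariance are maintained at every step; to salvage your two-stage plan you would in effect have to redo this per-layer centralization, at which point the second (extraction) stage becomes unnecessary.
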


\proof
Suppose the nilpotency class of $H$ is $n$.
We construct inductively on $i\leq n$ the following subgroups of $G$: In the $i$th step we find  a definable subgroup $G_i$ of $G$ and two definable normal subgroups $ N_{2i-1} $ and $ N_{2i}$ of $G_i$ all normalized by $N_G(H)$ such that:
\begin{itemize}
\item $H \ls G_i$;
\item $ \t Z_i (H) \cap G_i \leq N_{2i}$;
\item $[N_{2i-1},G_i ] \leq N_{2(i-1)}$;
\item $[N_{2i}, G_i] \leq N_{2i-1}$;
\item $G_i \leq G_{i-1}$.
\end{itemize}

Once the construction is done, letting $N$ be equal to $N_{2n}$ gives a definable nilpotent subgroup normalized by $N_G(H)$ and of class at most $2n$  which is witnessed by the sequence $$N_0 \cap G_n \leq N_1 \cap G_n \leq \dots \leq N_{2n} \cap G_n.$$


So, let $N_0$ be the trivial group and $G_0$ be equal to $G$.

Now, assume that $i>0$ and that for $j < i$ and $k  < 2j -1$ the groups $N_k$ and $G_{j}$ have been constructed. We work in the  quotient $\mathbb G = G_{i-1} / N_{2(i-1)}$ which is an $\M$-group and we let $\mathbb H = (H\cap G_{i-1}) / N_{2(i-1)}$ which is obviously normalized by $N_G(H)$. The first step is to reduce $\mathbb G$ such that $\t C_\bG(\bH) = \t Z(\bG)$. Observe that the preimage of $\t C_{\bG}(\bH)$ in $G_{i-1}$ contains $\t Z_i(H) \cap G_{i-1}$ as  $ \t Z_{i-1} (H) \cap G_{i-1}$ is contained in $N_{2(i-1)}$. 

If there is $g_0\cdot N_{2(i-1)} \in \t C_\bG(\bH) \setminus \t Z(\bG)$, we consider the family
$$ \mH = \{ C_\bG (g_0^h \cdot N_{2(i-1)}) : h \in N_G(H) \}$$
Note that as $\mathbb H$ is normalized by $N_G(H)$ all members of $\mH$ almost contain $\mathbb H$. Moreover, as $\bG$ is an $\M$-group there exists a finite intersection $\b F$ of groups in $\mH$ such that any bigger intersection has index at most $d$. Thus the family 
$$\{ \b F^h: h \in N_G(H) \}$$ 
is  uniformly commensurable. So by Theorem \ref{thm_sch} there is a definable subgroup $\b C_0$ of $\bG$ which is invariant under all automorphisms which stabilizes the family setwise, thus normalized by $N_G(H)$, and commensurable with $\b F$. Moreover $\b F \cap \b H$ is commensurable with  $C_\bH (g_0 \cdot N_{2(i-1)})$ which has finite index in $\b H $ as $g_0 \cdot N_{2(i-1)}$ belongs to $\t C_\bG(\bH)$. Over all we obtain that  
$$\b C_0 \cap \bH \sim \b H .$$ 
If now, there is $g_1 \in \t C_{\b C_0} (\bH \cap \b C_0)\setminus \t Z(\b C_0)$, we can redo the same construction and obtain a $\b C_1$. 
This process has to stop after finitely many steps, as for every $j$ the index of $C_\bG(g_0  \cdot N_{2(i-1)}, \dots , g_{j+1} \cdot N_{2(i-1)})$ in $C_\bG(g_0 \cdot N_{2(i-1)}, \dots , g_{j} \cdot N_{2(i-1)})$ is infinite by construction. Letting $\b C$ be equal to $\bigcap_i \b C_i$, we found a definable subgroup of $\bG$ (thus an $\M$-group), such that $\t C_{\b C} (\bH) = \t Z(\b C)$, which is normalized by $N_G(H)$ and whose intersection with $\bH$ has finite index in $\bH$.

The next step is to define $G_i$, $N_{2i-1}$ and $N_{2i}$.
As $\b C$ is an $\M$-group, Proposition \ref{Prop_ComFin} yields that the commutator $\b Z= [\t Z(\b C), \t C_{\b C}(\t Z(\b C))]$ is finite. Since $\t Z(\b C)$ and $\t C_{\b C}(\t Z(\b C))$ are characteristic subgroups of $\b C$, we have that $\b Z$ is normalized by $N_G(H)$ and contained in $\t Z(\b C)$. Note additionally that the group $\t C_{\b C}(\t Z(\b C))$ has finite index in $\b C$ by symmetry. Thus  $\b G_i =  \t C_{\b C}(\t Z(\b C)) \cap  C_{\b C} (\b Z)$ has finite index in $\b C$. We let $\b N_1 =   \b Z \cap \b G_i$, a finite subgroup of the center of $\bG_i$, and $\b N_2 =   \t Z(\b C) \cap \b G_i =  \t Z(\bG_i)$, which is contained in $Z(\bG_i/ \b N_1)$. Note that all groups to define $\b G_i$, $\b N_1$ and $\b N_2$ are all characteristic subgroups of $\b C$ and thus $\b G_i$, $\b N_1$ and $\b N_2$ are normalized $N_G(H)$. Moreover, $\b N_1$ and $\b N_2$ are normal subgroups of $\b G$.
Let $G_i$, $N_{2i-1}$ and $N_{2i}$  be the preimage of $\b G_i$, $\b N_1$ and $\b N_2$ in $G$  respectively which satisfy all demands. This finishes the construction and therefore the proof.
\qed

\begin{cor}\label{Cor_NorNilEnv}
If $H$ is a normal nilpotent subgroup of $G$ of class $n$, there is a definable normal nilpotent subgroup of $G$ that contains $H$ of class at most $3n$.
\end{cor}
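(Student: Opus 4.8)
The plan is to feed $H$ into Proposition \ref{Lem_DefEnvNil} and then upgrade the resulting envelope, which only \emph{almost} contains $H$, into one that genuinely contains it, controlling the nilpotency class by Fitting's classical theorem.

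First I would observe that a nilpotent group of class $n$ is in particular almost nilpotent of class at most $n$: its upper central series is an almost central series, since the ordinary center of each quotient $H/H_i$ is contained in its almost center $\t Z(H/H_i)$. Hence Proposition \ref{Lem_DefEnvNil} applies to $H$ and produces a definable nilpotent subgroup $N$ of $G$ of class at most $2n$ which is normalized by $N_G(H)$ and satisfies $H \ls N$. Because $H$ is assumed normal in $G$, we have $N_G(H) = G$, and therefore $N$ is itself normal in $G$.

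Next I would replace $N$ by the product $HN$. As $N$ is normal in $G$, the set $HN$ is a subgroup of $G$, and since both $H$ and $N$ are normal in $G$, so is $HN$; moreover it contains $H$. The almost containment $H \ls N$ means that $[H : H\cap N]$ is finite, whence $[HN : N] = [H : H\cap N]$ is finite as well, so $HN$ is a finite union of translates $h_i N$ of the definable set $N$ and is therefore definable.

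Finally I would bound the nilpotency class. Both $H$ and $N$ are normal nilpotent subgroups of $HN$, of classes $n$ and at most $2n$ respectively. By Fitting's theorem the subgroup generated by two normal nilpotent subgroups of classes $c$ and $d$ is nilpotent of class at most $c+d$; applied to $\langle H, N\rangle = HN$ this yields that $HN$ is nilpotent of class at most $n + 2n = 3n$. Thus $HN$ is the required definable normal nilpotent subgroup of $G$ of class at most $3n$ containing $H$. I do not expect a genuine obstacle here: the only points needing care are the identification $N_G(H)=G$ via normality of $H$, and the fact that finiteness of $[H : H\cap N]$ makes $HN$ a finite union of cosets of $N$ and hence definable; once these are in place the class bound is immediate from Fitting.
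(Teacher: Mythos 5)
Your proposal is correct and follows essentially the same route as the paper: apply Proposition \ref{Lem_DefEnvNil} (noting $N_G(H)=G$ by normality), pass to $HN$, observe it is definable as a finite union of cosets of $N$, and bound the class by $n+2n=3n$. You merely make explicit two points the paper leaves implicit, namely that a nilpotent group of class $n$ is almost nilpotent of class at most $n$ (so the proposition applies) and that the class bound comes from Fitting's classical theorem for products of normal nilpotent subgroups.
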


\proof
By the previous proposition, we can find a definable normal nilpotent subgroup $N$ of $G$ of class at most $2n$ that almost contains $H$. Thus, the group $HN$ is a finite union of coset of the definable subgroup $N$ in $G$. Therefore, we have that $HN$ is a definable normal nilpotent subgroup of class at most $3n$ which contains $H$.
\qed


\section{Fitting subgroup of $\M$-groups}\label{Sec_Fit}

The goal of this subsection is to prove that the Fitting subgroup of any $\M$-group is nilpotent. 
The first step is to show that any locally nilpotent subgroup, thus in particular the Fitting subgroup, of an $\M$-group is solvable.

\begin{prop}
Any locally nilpotent subgroup of an $\M$-group is solvable.
\end{prop}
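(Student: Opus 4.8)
The plan is to reduce the statement to the almost-solvable case already treated in Section~\ref{sec_env}. First I would observe that solvability of $H$ follows once we know that $H$ is almost solvable of \emph{finite} almost solvable class: by the theorem on definable solvable envelopes there is then a definable solvable subgroup $D\le G$ with $H\ls D$, and replacing $H\cap D$ by its normal core $K$ in $H$ yields a normal solvable subgroup of finite index in $H$ (the core of a finite-index subgroup again has finite index). Since $H/K$ is a finite quotient of a locally nilpotent group it is nilpotent, hence solvable, and therefore so is $H$. Thus it suffices to bound the almost solvable class of $H$; and since $H^{(d)}=1$ is equivalent to $F^{(d)}=1$ for every finitely generated subgroup $F$, it is in fact enough to bound uniformly the derived lengths of the finitely generated---hence nilpotent---subgroups of $H$.

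Next I would extract this uniform bound from the $\M$-condition. The heuristic is that a nilpotent subgroup of large derived length should force a long strictly descending chain of centralizers, each of infinite index, which the chain condition on the $\M$-group $G$ forbids. The subtlety is that a single finitely generated nilpotent subgroup may sit inside $G$ with all its internal centralizers of finite index, so no infinite-index chain arises from one $F$ in isolation; the force must come from $H$ as a whole. To handle this I would phrase everything in terms of almost centralizers rather than honest centralizers: using that the iterated almost centralizers are definable (Proposition~\ref{prop_FCdef}), together with the symmetry and three-subgroup machinery of Section~\ref{sec_symtsl}, a failure of a uniform bound on the almost solvable class of $H$ should produce, modulo definable subgroups, an unbounded descending chain of almost centralizers, contradicting the $\M$-condition applied to a suitable definable quotient.

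Concretely, I would try to build an almost abelian normal series for $H$ from the top down, using local nilpotency to guarantee that the relevant almost centres---computed inside the definable envelopes furnished by Section~\ref{sec_env}---are nontrivial and meet $H$ in finite index at each stage, and using the $\M$-chain condition to guarantee that the series so constructed terminates after finitely many steps. The main obstacle is exactly this termination. Unlike the case of a single finitely generated subgroup, local nilpotency alone does not bound the derived length (there are locally nilpotent groups, such as McLain's group, that are not solvable), so the whole weight of the argument lies in converting the $\M$-chain condition on centralizers into a finite bound on the almost solvable class of $H$; note that one should \emph{not} expect to obtain an almost \emph{central} series, since locally nilpotent subgroups of $\M$-groups need not be nilpotent. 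Once the bound on the almost solvable class is in hand, the reduction of the first paragraph finishes the proof.
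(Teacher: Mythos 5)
Your opening reductions are sound: a normal solvable subgroup of finite index in $H$ does give solvability of $H$, since a finite quotient of a locally nilpotent group is nilpotent (and indeed this observation, with Schlichting's theorem supplying the finite-index normal subgroup, is the closing move of the paper's own proof); likewise the localization of derived length to finitely generated subgroups is correct. But there is a genuine gap, and you name it yourself: the entire content of the theorem is the conversion of the $\M$-chain condition into termination of some series for $H$, and for that step the proposal offers only a heuristic --- no construction, no induction scheme, no mechanism by which the chain condition actually stops your top-down almost abelian series. As written, you have reduced the theorem to the statement ``a locally nilpotent subgroup of an $\M$-group has finite almost solvable class,'' which is essentially as hard as the theorem itself. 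Note also that the solvable-envelope theorem you invoke takes almost solvability (with a class bound) as a \emph{hypothesis}, so it can only exploit the bound after the fact, never help establish it.

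For comparison, the paper never bounds an almost solvable class at all; it proves solvability directly by induction along maximal centralizer chains. Fix $m$ maximal such that there is a chain $G = C_G(g_1) > \dots > C_G(g_1,\dots,g_m)$ with each step of infinite index, and prove by downward induction that every $C_K(g_1,\dots,g_{m-i})$ is solvable. The base case is a definable almost abelian group, whose derived subgroup is finite by Neumann's theorem and hence nilpotent inside the locally nilpotent $K$. In the induction step, local nilpotency enters in a precise way that your sketch lacks: any nilpotent subgroup $H_0 \not\leq \t Z(G)$ satisfies $C_{H_0}(H_0/\t Z(G)) > H_0 \cap \t Z(G)$ (a nontrivial nilpotent group has nontrivial center), and iterating this against the chain condition modulo $\t Z(G)$ produces an element $h \in H \setminus \t Z(G)$ whose centralizer modulo $\t Z(G)$ has bounded index in $H$; since $C_G(h)$ then has infinite index in $G$, the induction hypothesis makes $C_H(h)$ solvable, two kernel-image arguments lift solvability to $C_H(h/\t C_H(G))$, and Schlichting plus the finite-quotient observation finish. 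It is exactly this mechanism --- trading local nilpotency for almost-central elements and descending along a centralizer chain --- that is missing from, and would need to be supplied to complete, your proposal.
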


The proof is inspired by the corresponding result for type-definable groups in simple theories \cite[Lemma 3.6]{PalWag}. For sake of completeness we give a detailed proof.
\proof
Let $K$ be a locally nilpotent subgroup of an $\M$-group $G$. Let $m$ be the minimal natural number such that each descending chain of centralizer in $G$ of infinite index has length at most $m$. We consider all sequences of the form
$$G =C_G(g_1) > \dots > C_G(g_1, \dots g_{m})$$
such that every centralizer has infinite index in its predecessor and let $\S$ be the collection of such tuples $\bar g = (g_1, \dots, g_{m})$. Note that the first element of the tuple is always an element of the center of $G$. We prove by induction that $  C_K(g_1, \dots g_{m-i})$ is solvable for any $\bar g$ in $\S$ and all $i \leq m$ by induction on $i$.

For $i=0$, the group $C_G(g_1, \dots g_{m})$ is a definable almost abelian group. Using Fact \ref{fac_neum} we obtain that its derived group is finite. So the derived group of $ C_K(g_1, \dots g_{m})$ is a finite subgroup of the locally nilpotent group $K$, hence it is nilpotent and whence $C_K(g_1, \dots g_{m})$ is solvable.

Now we assume that the induction hypothesis holds for $i$ greater or equal to $0$ and any $\bar g$ in $\S$. We consider the group $ C_K (g_1, \dots g_{n-i-1})$. By the induction hypothesis, we know that for any $g$ in $G$ for which $C_G(g)$ has infinite index in $C_G(g_1, \dots g_{n-i-1})$, the group $C_K(g_1, \dots g_{n-i-1},g)$ is solvable. Therefore, letting $H$ be equal to the locally nilpotent group $C_K (g_1, \dots g_{n-i-1})$ and replacing $G$ by $C_G(g_1, \dots g_{n-i-1})$ (which is still an $\M$-group as it is a definable subgroup of an $\M$-group) yields that for any $g$ such that $C_G(g)$ has infinite index in $G$, the centralizer $C_H(g)$ is solvable.

Now, we fix some natural numbers $n$ and $d$ such that each descending chain of centralizer in $G$ modulo $\t Z(G)$ of index greater than $d$ has length at most $n$.

If $H$ is contained in the definable almost abelian group $\t Z(G)$, the same argument as above shows that $H$ is solvable. Thus, we may suppose that $H$ is not contained in the almost center of $G$. As $H$ is locally nilpotent, we can find a nilpotent subgroup $H_0$ of $H$ for which this holds, i.\ e.\ the group $H_0 /\t Z(G)$ is non-trivial. As $H_0$ is nilpotent, the subgroup $C_{H_0}(H_0 / \t Z(G))$ strictly contains $\t Z(G) \cap H_0$.
Take an element $h_0$ in their difference. If $C_H(h_0/\t Z(G))$ has index greater than $d$ in $H$, one can find a nilpotent subgroup $H_1$ of $H$ which contains $H_0$ such that $C_{H_1}(h_0/ \t Z(G))$ has index greater than $d$ in $H_1$ as well.
Choose again an element $h_1$ in $C_{H_1}(H_1 /\t Z(G)) \backslash \t Z(G)$, so $C_H(h_1/ \t Z(G))$ contains $H_1$ and thus $C_H(h_0 /\t Z(G), h_1 /\t Z(G))$ has index greater than $d$ in $C_H(h_1 /\t Z(G))$. If $C_H(h_1/ \t Z(G))$  has as well index greater than $d$ in $H$ we can iterate this process.
By the choice of $n$ and $d$ this has to stop after at most $n$ times and so we may find an element $h$ in $H \setminus \t Z(G)$ and for which the group $C_H(h/ \t Z(G))$ has index at most $d$ in $H$. As $h$ does not belong to the almost center of $G$, we have that $C_G(h)$ has infinite index in $G$ and therefore $C_H(h)$ is solvable by assumption.

Let $N$ be equal to the derived group of $\t C_H(G)$. Since it is finite and contained in $H$ it is nilpotent.
Consider the map from $C_H( h/ N)$ to $N$ sending $x$ to $[h,x]$. This map has as kernel the solvable subgroup $C_H(h)$ and as image the nilpotent group $N$. So the subgroup $C_H(h /N)$ is solvable as well. The second step is to consider the map from $C_H(h/ \t C_H(G))$ to $\t C_H(G) / N$ which maps $x$ to $[h,x] N$. Note that again the kernel $C_H(h/N)$ is solvable and the image $\t C_H(G) / N$ is abelian. So again the domain $C_H(h /\t C_H(G))$ is solvable. But this group and hence all its $H$-conjugates have finite index in $H$ and form a uniformly commensurable family of subgroups.  Applying Schlichting's theorem to this family we obtain a normal subgroup $N$ of $H$ of finite index which is a finite extension of a solvable group in a locally nilpotent. As any finite quotient of a locally nilpotent group is nilpotent, the groups $N$ as well as $H$ are solvable. This finishes the induction.

Taking any tuple $(g_1, \dots, g_n)$ in $\S$ and letting $i$ be equal to $m-1$ we obtain that $C_K(g_1)$ which equal $K$ is solvable which finishes the proof.
\qed
\begin{cor}\label{Cor_LocNilSol}
The Fitting subgroup of an $\M$-group is solvable.
\end{cor}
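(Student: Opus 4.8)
The plan is to reduce the statement directly to the preceding Proposition, which asserts that every locally nilpotent subgroup of an $\M$-group is solvable. Since the Fitting subgroup $F$ of $G$ is itself a subgroup of the $\M$-group $G$, it suffices to observe that $F$ is locally nilpotent; the solvability then follows by applying the Proposition verbatim to $F$.

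To establish local nilpotency I would invoke the classical theorem of Fitting: if $M$ and $N$ are two normal nilpotent subgroups of a group, of nilpotency classes $c$ and $d$ respectively, then $MN$ is again a normal nilpotent subgroup, of class at most $c+d$. By an immediate induction, any product $N_1 \cdots N_r$ of finitely many normal nilpotent subgroups of $G$ is a nilpotent subgroup. Now take an arbitrary finitely generated subgroup $\langle x_1, \dots, x_k\rangle$ of $F$. By definition of the Fitting subgroup, each generator $x_i$ lies in some product of finitely many normal nilpotent subgroups of $G$, so that all of $x_1, \dots, x_k$ lie in a single such finite product $N_1 \cdots N_r$, which is nilpotent by the above. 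Hence $\langle x_1, \dots, x_k\rangle$ is contained in a nilpotent subgroup and is therefore itself nilpotent, which is exactly the assertion that $F$ is locally nilpotent.

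With local nilpotency of $F$ in hand, the Proposition applies and yields that $F$ is solvable, completing the proof. I do not expect any genuine obstacle here beyond recalling Fitting's product theorem: all the substantial work, namely the chain-condition argument controlling centralizers of infinite index together with the appeals to Neumann's theorem (Fact \ref{fac_neum}) and Schlichting's theorem (Theorem \ref{thm_sch}), has already been carried out in the proof of the Proposition. The corollary is thus a formal consequence of the fact that being generated by normal nilpotent subgroups forces local nilpotency.
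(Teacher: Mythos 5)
Your proof is correct and follows essentially the same route as the paper: the paper deduces the corollary from the preceding Proposition via the standard fact that the Fitting subgroup is locally nilpotent (``thus in particular the Fitting subgroup''), which is exactly your reduction. Your appeal to Fitting's product theorem merely fills in the classical detail that the paper leaves implicit.
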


The proof of \cite[Lemma 3.8]{PalWag} which is stated for groups type-definable in a simple theory uses only symmetry of the almost centralizer and that they are definable. Hence it remains true for $\M$-groups.

We use the following notation: Let $L$ be group that acts on another group $A$. If $B$ is a subgroup of $A$ and $g$ an element of $L$ we denote by $C_B(g)$ the group of elements $b$ in $B$ on which $g$ acts trivially, i.\ e.\ $g  b = b$. Vice-versa, If $K$ is a subgroup of $L$ and $a$ an element of $A$, we denote by $C_K(a)$ all elements $k$ in $K$ which act trivially on $a$. 

\begin{lemma}\label{Lem_GrpAct} 
Let  $L$ and $A$ be quotients of definable subgroups of $G$ such that $A$ is abelian and $L$ acts on $A$ by conjugation (i.\ e.\ the action by $L$ on $A$ via conjugation is well-defined). Suppose $H$ is an arbitrary abelian subgroup of $L$. Assume that
\begin{itemize}
\item there are elements $ (h_i : i < l)$ in $H$ and natural numbers $(m_i : i < l)$ s.\ t.\ $$(h_i -1) ^{m_i}  A\ \mbox{\bf is finite }\ \forall i < l;$$
\item for any $h$ in $H$ the index of $C_A(h_i, \dots, h_{l-1} , h)$ in $C_A(h_i, \dots, h_{l-1})$ is finite.
\end{itemize}
Then there is a definable subgroup $K$ of $L$ which almost contains $H$ and a natural number $m$ such that $\t C_A^m(K)$ has finite index in $A$.
\end{lemma}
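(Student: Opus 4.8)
The plan is to read the action of $H$ on the abelian group $A$ as a module action and to produce the required $K$ as the stabilizer of a suitable \emph{definable} filtration of $A$ on which $H$ acts almost trivially. Throughout I work additively in $A$ and write $(g-1)a$ for ${}^g a - a$; recall that $A$ and $L$, being quotients of definable subgroups of the $\M$-group $G$, are themselves $\M$-groups, so all the almost centralizers appearing below are definable by Proposition~\ref{prop_FCdef}.

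First I would exploit hypothesis~(1). Since $H$ is abelian the operators $(h_i-1)$ commute, so the composite $(h_0-1)^{m_0}\cdots(h_{l-1}-1)^{m_{l-1}}$ has finite image in $A$. This lets me build a finite descending chain of definable, $H$-invariant subgroups
$$A=A_0\supseteq A_1\supseteq\cdots\supseteq A_N,\qquad A_N\ \text{finite},$$
by setting $A_{k+1}=\sum_{i<l}(h_i-1)A_k$: each $A_k$ is definable, being a finite sum of images of definable endomorphisms, and $H$-invariant because $H$ is abelian. By construction $T:=\langle h_0,\dots,h_{l-1}\rangle$ acts trivially on every quotient $A_k/A_{k+1}$, so $T$ acts almost nilpotently on $A$.

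The key step is to upgrade this from $T$ to all of $H$, namely to show that $H$ acts with finite orbits on each quotient, equivalently $H\ls\t C_L(A_k/A_{k+1})$ for every $k$. Hypothesis~(2) furnishes the bottom of this statement: with $B:=C_A(h_0,\dots,h_{l-1})$ it says exactly that $[B:C_B(h)]$ is finite for every $h\in H$, i.e.\ $H\le\t C_L(B)$, and by symmetry of the almost centralizer (Theorem~\ref{Thm_sym}) this reads $B\ls\t C_A(H)$. I would then propagate this up the chain by downward induction on $k$, using that $A_k/A_{k+1}$ sits inside the $T$-fixed points of $A/A_{k+1}$ and relating those fixed points back to the data controlled by hypothesis~(2). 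This propagation is the \emph{main obstacle}: condition~(2) only constrains the elements of $H$ lying outside $T$ on the genuine fixed points $B$, and one must rule out that they act with infinite orbits on the higher layers of the filtration; this is where the full strength of the $\M$-chain condition on the quotients $A/A_{k+1}$ (again $\M$-groups by Remark~\ref{Rem_defMccom}) has to be invoked.

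Granting the key step, I would finish as follows. Set
$$K:=\stab_L(A_\bullet)\cap\bigcap_{k<N}\t C_L(A_k/A_{k+1}),$$
a definable subgroup of $L$, being a finite intersection of stabilizers of the definable groups $A_k$ and of almost centralizers of definable quotients, the latter definable by Proposition~\ref{prop_FCdef}. By the key step a finite-index subgroup of $H$ lies in $K$, so $H\ls K$. For every $k$ one has $K\le\t C_L(A_k/A_{k+1})$, and applying symmetry (Theorem~\ref{Thm_sym}) inside the $\M$-group $A/A_{k+1}$ gives $A_k/A_{k+1}\ls\t C_{A/A_{k+1}}(K)$; that is, each layer of the chain is almost central for $K$. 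Telescoping these almost inclusions up the finite chain (starting from $A_{N-1}\ls\t C_A(K)$, since $A_N$ is finite, and using $A_{N-j}\ls\t C^{\,j}_A(K)$ at stage $j$) yields $A=A_0\ls\t C^{\,N}_A(K)$, so that $\t C^{\,N}_A(K)$ has finite index in $A$. Taking $m=N$ completes the proof.
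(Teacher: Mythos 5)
Your proposal stalls exactly at the step you flag as the ``main obstacle'', and this is not a gap that more work on your filtration will close: the key step is false under the stated hypotheses. Hypothesis (2) controls the action of $H$ only on the genuine fixed points $C_A(\bar h)$, and nothing forces elements of $H$ to act almost trivially on the layers of the image filtration $A_{k+1}=\sum_{i<l}(h_i-1)A_k$. Concretely, let $F$ be an algebraically closed field, $A=F^4$ with coordinates $(u_1,u_2,w_1,w_2)$, and let $L=H$ be the abelian group of all maps $g_{\mu,\lambda}\colon (u_1,u_2,w_1,w_2)\mapsto (u_1+\mu w_1,\,u_2+\lambda w_1,\,w_1,\,w_2)$ with $\mu,\lambda\in F$. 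The group $G=A\rtimes L$ is definable in $F$, hence stable, hence an $\M$-group, and $L$ acts on the abelian definable group $A$ by conjugation. Take $l=1$, $h_0=g_{1,0}$, $m_0=2$: then $(h_0-1)^2A=\{0\}$, so hypothesis (1) holds, and every element of $H$ fixes $C_A(h_0)=\{a\in A: w_1=0\}$ pointwise, so hypothesis (2) holds trivially. But $A_1=(h_0-1)A=\{(u_1,0,0,0)\}$, and for $\lambda\neq 0$ one has $C_A(g_{0,\lambda}/A_1)=\{w_1=0\}$, of infinite index in $A$; since $H\cap \t C_L(A/A_1)=\{g_{\mu,0}:\mu\in F\}$ has infinite index in $H$, your key step $H\ls \t C_L(A_0/A_1)$ fails at the very first layer. (The lemma itself holds here with $m=2$ and $K=L$: the group $L$ fixes $B=C_A(h_0)$ pointwise and acts trivially on $A/B$, so $\t C_A(L)=B$ and $\t C^2_A(L)=A$ --- the correct series is the ascending one by iterated almost centralizers, not your descending image series, and the two do not match.)

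This is precisely the trap the paper's proof is built to avoid. There, $K$ is not a stabilizer of a filtration but is defined directly from the chain condition relative to the fixed points, $K=\{g\in C_L(\bar h): [C_A(\bar h):C_A(\bar h,g)]<\infty\}$, which is definable because $G$ is $\M$ and contains $H$ by hypothesis (2). The combinatorial engine is then: any product of $m=1+\sum_i(m_i-1)$ operators $(h_{n_j}-1)$ has finite image (your pigeonhole argument); hence any product of $m-1$ of them has image almost contained in $C_A(\bar h)$, hence almost contained in $C_A(k)$ for every $k\in K$ by the very definition of $K$; and since $K$ centralizes $\bar h$, the operator $(k-1)$ commutes past the $(h_{n_j}-1)$'s, so iterating replaces all $m$ operators from $\bar h$ by operators from $K$, giving that $(k_{m-1}-1)\cdots(k_0-1)A$ is finite for every tuple from $K$. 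Only then is the almost central series built, from the top down, by peeling off one $(k_j-1)$ at a time using the equivalence ``$(g-1)B$ finite iff $B\ls C_A(g)$'', symmetry (Theorem \ref{Thm_sym}), and definability of the groups $\t C^j_A(K)$, which yields $A\ls \t C^m_A(K)$. If you want to salvage your write-up, the image filtration must be abandoned in favour of this mechanism; your final telescoping paragraph then becomes essentially the paper's closing step.
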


\proof
We denote by $\bar h$ the tuple $ (h_i : i < l)$.
As $G$ is an $\M$-group and $A$ is a definable quotient, we know that there is a natural number $d$ such that every descending chain of centralizers
$$ C_A (g_0) \leq  C_A (g_0, g_1) \leq \dots \leq  C_A (g_0, \dots, g_n) \leq \dots $$
with $g_i$ in $L$ and each of index greater than $d$ is of finite length. Hence, the group
$$ K = \{ g \in C_L(\bar h): [C_A(\bar h): C_A(\bar h, g)] < \infty\}$$
is definable and it contains $H$. Let $m$ be equal to $1+ \sum_{i =0}^{l-1} (m_i -1)$ and fix an arbitrary tuple $\bar n=(n_0, \dots, n_{m-1})$ in $l^{\times m}$. By the pigeonhole principle and the choice of $m$ there is at least one $i$ less than $l$ such that at least $m_i$ many coordinates of $\bar n$ are equal to $i$. As the group ring $\Z(H)$ is commutative and  $(h_i -1) ^{m_i}  A$ is finite for all $i$ less than $l$ by assumption, we have that
$$(h_{n_0}-1)  (h_{n_1}-1)  \ldots  (h_{n_{m-1}}-1)  A$$
is finite.

\begin{claim}
Let $g$ be in $L$ and $B$ subgroup of $A$. The set $(g -1 )B$ is finite if and only if  $B \ls C_A(g)$.
\end{claim}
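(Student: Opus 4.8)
The plan is to express $(g-1)B$ as the image of a group homomorphism whose kernel is exactly $B\cap C_A(g)$, and then to read off the stated equivalence from the first isomorphism theorem. Since $B\ls C_A(g)$ means by definition (for arbitrary subgroups) that $B\cap C_A(g)$ has finite index in $B$, finiteness of the image and finiteness of this index will be literally the same condition, so both directions of the claim follow simultaneously.

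Concretely, I would introduce the map $\varphi\colon B\to A$ defined by $\varphi(b)=(g-1)b$; in multiplicative notation, where the action of $L$ on $A$ is by conjugation, this reads $\varphi(b)=gbg^{-1}b^{-1}$, the commutator of $g$ and $b$. The first step is to check that $\varphi$ is a group homomorphism. Writing $a_i=gb_ig^{-1}\in A$ (these lie in $A$ since $L$ acts on $A$), a direct computation gives $\varphi(b_1b_2)=a_1a_2b_2^{-1}b_1^{-1}$ and $\varphi(b_1)\varphi(b_2)=a_1b_1^{-1}a_2b_2^{-1}$, and these two expressions agree precisely because $A$ is abelian, so that the four factors may be freely reordered. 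Thus $\varphi$ is a homomorphism.

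It then remains only to identify its kernel and image. The kernel consists of those $b\in B$ with $gbg^{-1}=b$, i.\ e.\ those on which $g$ acts trivially, so $\ker\varphi=B\cap C_A(g)$; and the image is exactly $(g-1)B$ by construction. The first isomorphism theorem yields $B/(B\cap C_A(g))\cong (g-1)B$, whence $[B:B\cap C_A(g)]=|(g-1)B|$. Therefore $(g-1)B$ is finite if and only if $B\cap C_A(g)$ has finite index in $B$, which is precisely the meaning of $B\ls C_A(g)$, establishing the claim.

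I do not anticipate any genuine obstacle: the argument is the standard correspondence between a commutator map and a centralizer. The single point demanding attention is the verification that $\varphi$ is multiplicative, where the hypothesis that $A$ is abelian is indispensable — for a non-abelian $A$ the map $b\mapsto gbg^{-1}b^{-1}$ is in general not a homomorphism and the clean index computation would break down. The only cosmetic care needed is keeping the additive module notation $(g-1)B$ aligned with the multiplicative commutator notation used in the computation.
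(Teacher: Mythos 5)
Your proof is correct and is essentially the paper's own argument: the paper's two directions (distinct cosets of $C_A(g)$ in $B$ yield distinct values $(g-1)b_i \neq (g-1)b_j$, and finitely many coset representatives $b_0,\dots,b_n$ exhaust the set $(g-1)B$) are precisely the injectivity-on-the-quotient and surjectivity statements that your first-isomorphism-theorem packaging of the map $b \mapsto (g-1)b$ expresses, with both arguments resting on the same key fact that $(g-1)$ is additive because $A$ is abelian. The only difference is cosmetic: the paper unpacks the isomorphism $B/(B\cap C_A(g)) \cong (g-1)B$ into explicit coset computations rather than naming the homomorphism and its kernel.
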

\proof
Suppose that $B \not\ls C_A(g)$. Then we have a set of representatives $\{b_i : i \in \omega\}$ of cosets of $B$ modulo $ C_A(g)$, i.\ e.\ for $i$ different than $j$ we have that $b_i b_j^{-1}$ does not belong to $C_A(g)$. Thus
$$0 \neq (g-1)  b_i b_j^{-1} = ((g-1)  b_i) +((g-1)  b_j^{-1}) = ((g-1)  b_i)- ((g-1)  b_j)$$
%
Hence for $i$ different than $j$, we have that
$$( g-1)   b_i   \neq (g-1)  b_j$$
which contradicts that $(g -1 )B$ is finite.

On the other hand if  $B \ls C_A(g)$ then there exists elements $b_0, \dots, b_n$ in $B$ such that for all $b$ in $B$ there exists $i$ less or equal to $n$ such that $ b^{-1} b_i$ belongs to $ C_A(g)$, i.\ e.\ $(g-1)  b =( g-1)  b_i$. Hence the set  $(g -1 )B$ is equal to $(g-1) \{b_0, \dots, b_n\}$ and whence finite.
\qed

So, applying the claim to $(h_{n_0}-1)  (h_{n_1}-1)  \ldots  (h_{n_{m-1}}-1)  A$, for all $i\leq n$ we obtain that
$$ (h_{n_1}-1)  \ldots  (h_{n_{m-1}}-1)  A \ls C_A(h_i).$$
Thus
$$ (h_{n_1}-1)  \ldots  (h_{n_{m-1}}-1)  A \ls C_A(\bar h).$$
Since for all $k_0$ in $K$, we have that $ C_A(\bar h) \ls C_A(k_0)$, we have as well that
$$(h_{n_1}-1)  \ldots  (h_{n_{m-1}}-1)  A \ls C_A(k_0)$$
and again by the claim we deduce that
$$ (k_0 -1)  (h_{n_1}-1)  \ldots  (h_{n_{m-1}}-1)  A$$
is finite.
As $K$ is contained in the centralizer of $\bar h$, the previous line is equal to
$$ (h_{n_1}-1)  \ldots  (h_{n_{m-1}}-1)  (k_0 -1)  A.$$
Now, we repeat the previous process $m$ times and we obtain that for any $m$ tuple $(k_0, \dots k_{m-1})$ in $K$ we have that the set
$$ (k_{m-1}-1)  \ldots  (k_1-1)  (k_0 -1)  A$$
is finite. As the tuple is arbitrary, we have that for any $k$ in $K$ the group $  (k_{m-2}-1)  \ldots  (k_1-1)  (k_0 -1)  A$ is almost contained in the centralizer $C_A(k)$, i.\ e.\
$$ K \leq \t C_G(  (k_{m-2}-1)  \ldots  (k_1-1)  (k_0 -1)  A)$$
By symmetry we have that
$$ (k_{m-2}-1)  \ldots  (k_1-1)  (k_0 -1)  A \ls \t C_A(K)$$
As $L$ is an $\M$-group, we have that $ \t C_A(K)$ is definable, thus we may work modulo this group we have that
$$ (k_{m-2}-1)  \ldots  (k_1-1)  (k_0 -1)  A / \t C_A(K)$$
is finite for all choices of an $m-1$ tuple $(k_0, \dots , k_{m-2})$ in $K$. Thus as before we obtain by the claim and symmetry that
$$ (k_{m-3}-1)  \ldots  (k_1-1)  (k_0 -1)  A \ls \t C_A(K/ \t C_A(K) ) = \t C^2_A(K)$$
Repeating this process $m$ many times yields that $A \ls  \t C^m_A(K)$.
\qed

\begin{theorem}
The Fitting subgroup of an $\M$-group is nilpotent.
\end{theorem}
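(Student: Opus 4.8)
The plan is to deduce nilpotency of $F:=\operatorname{Fit}(G)$ from an almost-nilpotency statement for the conjugation action of $F$ on the abelian sections of its derived series. By Corollary~\ref{Cor_LocNilSol} the group $F$ is solvable, and by Fitting's theorem (valid in arbitrary groups, via the ordinary three-subgroup lemma) a product of finitely many normal nilpotent subgroups is nilpotent, so $F$ is locally nilpotent. I would prove, by induction on the derived length $d$, the formally stronger statement that every $R\trianglelefteq G$ which is solvable and generated by normal-in-$G$ nilpotent subgroups is nilpotent, and then apply it to $R=F$. For $d\le 1$ the group $R$ is abelian. For the inductive step put $A:=R^{(d-1)}$, the last nontrivial derived subgroup; it is abelian and characteristic in $R$, hence normal in $G$. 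Working modulo $A$ — replacing the generating normal nilpotent subgroups by their definable normal nilpotent envelopes from Corollary~\ref{Cor_NorNilEnv} so as to stay inside the $\M$-framework, cf.\ Remark~\ref{Rem_defMccom} — the image of $R$ is normal, generated by normal nilpotent subgroups and of derived length $d-1$; by the induction hypothesis it is nilpotent of some class $c$, so that $\gamma_{c+1}(R)\le A$ and hence $\gamma_{c+1+j}(R)\le[A,\underbrace{R,\dots,R}_{j}]$ for all $j$. It therefore suffices to prove that $R$ acts nilpotently on $A$.

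The heart of the matter is to establish this via the action result, Lemma~\ref{Lem_GrpAct}, applied to the action of $R$ on the abelian definable quotient $A$. Its second hypothesis, the finite-index stabilisation of the chain of centralisers in $A$, is exactly the chain condition defining an $\M$-group. For the first hypothesis, that $(h-1)^{m}A$ be finite for suitable $h$, I would use local nilpotency: for every $a\in A$ and $h\in R$ the group $\langle a,h\rangle$ is nilpotent, so some power of $(h-1)$ annihilates $a$; this pointwise unipotence is to be promoted to a single uniform exponent $m$ by feeding the descending chain $(h-1)A\ge(h-1)^2A\ge\cdots$ into the chain condition, using the equivalence between finiteness of $(g-1)B$ and the almost containment $B\ls C_A(g)$ recorded in the Claim inside the proof of Lemma~\ref{Lem_GrpAct}. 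The lemma then produces a definable subgroup $K$ almost containing a chosen abelian $H\le R$ and an integer $m$ with $\t C_A^m(K)$ of finite index in $A$; since the iterated almost centralisers are definable (Proposition~\ref{prop_FCdef}, Remark~\ref{rem_FCdef}) and symmetric, this says precisely that $A$ is almost contained in the $m$-th almost centre of the $K$-action.

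To finish, I would let $H$ range over abelian subgroups whose translates generate $R$ modulo $A$ and use the chain condition to reduce to finitely many definable $K$'s and a common exponent $m$. Combining the resulting statements $A\ls\t C_A^m(K)$ — which by Corollary~\ref{Cor_EqCenCom} are equivalent to the vanishing of the $m$-fold iterated almost commutator of $A$ with $K$ — with $\gamma_{c+1}(R)\le A$, with the multiplicativity of the almost commutator (Lemma~\ref{Lem_StabMul}) and with the approximate three-subgroup lemma (Corollary~\ref{Cor_TSL}), I would conclude that $R$, and in particular $F$, is \emph{almost} nilpotent of bounded class. The definable nilpotent envelope of Proposition~\ref{Lem_DefEnvNil}, applied to the characteristic subgroup $F$, then yields a definable nilpotent subgroup $N$, normal because $N_G(F)=G$, with $F\ls N$; since $N\le\operatorname{Fit}(G)=F$ and $F$ is locally nilpotent, the chain condition is to force $F$ itself to be nilpotent.

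I expect the main obstacle to be exactly this last upgrade, from \emph{almost} nilpotency (the natural output of the almost-centraliser calculus and of Lemma~\ref{Lem_GrpAct}, inherently stated up to finite index and with almost rather than genuine centralisers) to genuine nilpotency of bounded class. Controlling that the finite discrepancies accumulated at the successive abelian sections and at each commutator step remain bounded — so that $\t Z_n(F)=F$ can be promoted to $Z_m(F)=F$ — is where local nilpotency of the Fitting subgroup, the definability of the almost centralisers in $\M$-groups (Proposition~\ref{prop_FCdef}), the upgrade from almost containment to containment in Corollary~\ref{cor_HHleqK}, and the definable envelopes of Proposition~\ref{Lem_DefEnvNil} and Corollary~\ref{Cor_NorNilEnv} all have to be brought to bear.
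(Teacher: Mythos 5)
Your overall skeleton (solvability via Corollary~\ref{Cor_LocNilSol}, induction along a derived series, the action lemma~\ref{Lem_GrpAct} on abelian sections, and a final upgrade from almost nilpotency using Proposition~\ref{Lem_DefEnvNil} and Fitting's theorem) is the same as the paper's, but your induction is set up in a way that breaks down at its central step: you quotient by $A:=R^{(d-1)}$, which is an abstract, non-definable subgroup, and you even call it ``the abelian definable quotient $A$''. Two things fail. First, $G/A$ is not a definable quotient, so Remark~\ref{Rem_defMccom} does not make it an $\M$-group, and your induction hypothesis --- a statement about normal subgroups of $\M$-groups --- cannot be applied to the image of $R$ in $G/A$; replacing the \emph{generating} nilpotent subgroups by their definable envelopes (Corollary~\ref{Cor_NorNilEnv}) does not repair this, because the group you quotient by is still $A$. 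Second, and fatally for ``$R$ acts nilpotently on $A$'', Lemma~\ref{Lem_GrpAct} explicitly requires both $L$ and the abelian group acted upon to be quotients of \emph{definable} subgroups of $G$: its second hypothesis is the $\M$ chain condition inside that group, and its proof needs the almost centralizers $\t C_A^m(K)$ to be definable (Proposition~\ref{prop_FCdef}); none of this is available for $A=R^{(d-1)}$. The paper's proof is organized precisely to avoid this trap: it inducts bottom-up ($F(G)^{(n+1)}$ nilpotent $\Rightarrow$ $F(G)^{(n)}$ nilpotent), replaces the nilpotent layer $F(G)^{(n+1)}$ by a \emph{definable} normal nilpotent envelope $N$ (Corollary~\ref{Cor_NorNilEnv}), and applies Lemma~\ref{Lem_GrpAct} to the definable abelian sections $Z_i(N)/Z_{i-1}(N)$ of its central series, so that every quotient used stays inside the $\M$-framework.

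There is also a gap in how you verify the first hypothesis of Lemma~\ref{Lem_GrpAct}. Local nilpotency gives, for each $a\in A$ and $h$, an exponent depending on $a$, and your proposed uniformization --- feeding the chain $(h-1)A\geq (h-1)^2A\geq\cdots$ into the chain condition --- does not work: the $\M$ condition bounds descending chains of \emph{centralizers} up to finite index, not chains of images $(h-1)^kA$ (equivalently, ascending chains of iterated kernels), and an $\M$-group need not satisfy any ascending chain condition. The paper obtains the uniform exponent for free from normality: for $g\in F(G)^{(n)}$ there is a normal nilpotent $H_g\ni g$, and $N_iH_g$ is a product of two normal nilpotent subgroups of $G$, hence nilpotent, so $(g-1)^{m_g+1}N_i=\{1\}$ with $m_g$ its class --- one exponent for the whole section, not merely finiteness. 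The same trick is available in your setting (any $h\in R$ lies in a product of finitely many normal-in-$G$ nilpotent subgroups, which together with the relevant normal nilpotent section generates a nilpotent group), and it is what you should use in place of the pointwise argument. Finally, your last upgrade (``the chain condition is to force $F$ itself to be nilpotent'') is vaguer than needed: once $F(G)^{(n)}$ is almost nilpotent, Proposition~\ref{Lem_DefEnvNil} gives a normal nilpotent subgroup of finite index, a transversal of its cosets lies in some normal nilpotent subgroup of $G$ because $F(G)^{(n)}$ sits inside the Fitting subgroup, and Fitting's theorem for two normal nilpotent subgroups concludes --- no further chain condition argument is needed.
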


\proof
The Fitting subgroup $F(G)$ of $G$ is solvable by Corollary \ref{Cor_LocNilSol}. So there exists a natural number $r$ such that the $r$th derived subgroup $F(G)^{(r)}$ of $F(G)$ is trivial, hence nilpotent. Now we will show that if $F(G)^{(n+1)}$ is nilpotent, then so is $F(G)^{(n)}$. So suppose that  $F(G)^{(n+1)}$ is nilpotent. As it is additionally normal in $G$, using Corollary \ref{Cor_NorNilEnv} we can find a definable normal nilpotent subgroup $N$ of $G$ containing $F(G)^{(n+1)}$, and notice that the  central series
$$ \{1\} = N_0 < N_1 < \dots < N_k = N$$
with $N_i$ is equal to $Z_i(N)$ consist of definable normal subgroups of $G$ such that $[N, N_{i+1}] \leq N_i$.

Note first, that it is enough to show that $ F(G)^{(n)}$ is almost nilpotent: If $ F(G)^{(n)}$ is almost nilpotent it has a normal nilpotent subgroup $F$ of finite index by Lemma  \ref{Lem_DefEnvNil}. As $ F(G)^{(n)}$ is a subgroup of the Fitting subgroup, any finite subset is contained in a normal nilpotent subgroup of $G$. Thus, there is a normal nilpotent subgroup that contains a set of representatives of cosets of $F$ in $F(G)^{(n)}$. Hence the group $ F(G)^{(n)}$ is a product of two nilpotent normal subgroups and whence nilpotent. 

Second, any group almost contained in an almost nilpotent group is almost nilpotent as well. Thus, if  suffices to find a group $A$ which almost contains $ F(G)^{(n)}$ and such that $A \leq \t C_G^l(A)$ for some $l$.

As $ F(G)^{(n)}/N$ is abelian and $G/N$ is an $\M$-group, one can find a definable subgroup $A'$ of $G$ which contains $F(G)^{n}$ such that $A'/N$ is an FC-group, i.\ e.\  $A' \leq \t C_G(A'/N)$. The next step is to find a definable subgroup $A$ of $A'$ of finite index and a natural number $m$  for which $N \leq \t C_G^m(A)$. This will imply that $A \leq \t C_G(A/N) \leq \t C_G(A/\t C_G^m(A)) = \t C_G^{m+1}(A)$. As $A$ almost contains $ F(G)^{(n)}$, the group $ F(G)^{(n)}$ would be nilpotent by the above.

Fix now some $i > 0$. For any $g$ in $ F(G)^{(n)}$ there is some normal nilpotent subgroup $H_g$ which contains $g$. So $N_iH_g$ is nilpotent as well. Therefore we can find a natural number $m_g$ such that $[N_i,_{m_g} g] \leq \{1\}$ or seen as the group action as in Lemma \ref{Lem_GrpAct}
$$(g-1)^{m_g+1} N_i =\{1\}.$$
Additionally, as $G$ is an $\M$-group, we can find a finite tuple $\bar g$ in $ F(G)^{(n)}$ such that for any $g \in F(G)^{(n)}$ the index $[C_{N_i}( \bar g / N_{i-1}) : C_{N_i}( \bar g / N_{i-1}, g /N_{i-1} )]$ is finite. So we may apply Lemma \ref{Lem_GrpAct}  to $G/N$ acting on $N_i/N_{i-1}$ and the abelian subgroup $F(G)^{(n)}/N$. Thus, there is a natural number $m_i$ and a definable group $K_i$ that almost contains $F(G)^{(n)}$ such that $N_i \ls \t C_G^{m_i}(K_i /N_{i-1})$. Then the finite intersection $A= A' \cap  \bigcap_i K_i$ is a definable subgroup of $G$ which still almost contains $F(G)^{(n)}$. As for $A'$, we have that  $A \leq \t C_G(A/N)$. Additionally:
$$N_i \ls \t C_G^{m_i}(K_i /N_{i-1}) \leq  \t C_G^{m_i}(A /N_{i-1})$$
and inductively
\begin{eqnarray*}
N &\ls &\t C_G^{m_k}(A /N_{k-1}) \\
&\leq &\t C_G^{m_k}(A /(C_G^{m_{k-1}}(A /N_{k-2})))\  =\ C_G^{m_k + m_{k-1}}(A /N_{k-2})\\
&\leq & \dots \  \leq \  C_G^{m_k + \dots + m_1}(A )
\end{eqnarray*}
Using that $A \leq \t C_G(A/N)$, we obtain that $A \leq \t C_G^m(A)$ for $m  = m_k + \dots + m_1 +1$.

Overall, we get that $F(G)^{(n)}$ is nilpotent for all $n$. In particular, the Fitting subgroup $F(G)$ of $G$ is nilpotent.
\qed

\section{Almost nilpotent subgroups of $\M$-groups} \label{sec_AlNilSub}

Again, for this section we fix an $\M$-group $G$.

The goal is to generalize results on nilpotent subgroups to almost nilpotent subgroups of $G$. 

\begin{lemma}
Assume that $G$ is almost nilpotent and let $N$ be a nontrivial intersection of normal $A$-definable subgroup of $G$. Then $\[N, G\]_A$ is properly contained in $N$ and $N \cap \t Z(G)$ is a nontrivial subgroup of $G$. In particular, any minimal normal $A$-invariant subgroup of $G$ is contained in its almost center.
\end{lemma}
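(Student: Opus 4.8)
The plan is to transport the classical fact---in a nilpotent group a nontrivial normal subgroup meets the centre---to the almost setting, running it through the \emph{almost upper central series} $\t Z_i(G)=\t C^i_G(G)$. Because $G$ is an $\M$-group these subgroups are definable (Proposition \ref{prop_FCdef}, Remark \ref{rem_FCdef}) and characteristic, and because $G$ is almost nilpotent, say of class $n$, they give a finite chain $$\{1\}=\t Z_0(G)\le \t Z_1(G)\le\dots\le\t Z_n(G)=G.$$ The first thing I would record is the commutator form of the defining relation of this chain: since $\t Z_i(G)=\t C_G(G/\t Z_{i-1}(G))$, Corollary \ref{Cor_EqCenCom} applied with $H=\t Z_i(G)$, $K=G$ and $L=\t Z_{i-1}(G)$ yields $\[\t Z_i(G),G\]_A\le \t Z_{i-1}(G)$ for each $i\ge 1$.

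The heart of the argument is a single \emph{master bound}: $$\[\,N\cap\t Z_i(G),\,G\,\]_A\ \le\ N\cap\t Z_{i-1}(G)\qquad(i\ge 1).$$ Here $N\cap\t Z_i(G)$ is an intersection of normal $A$-definable subgroups (as $N$ is and $\t Z_i(G)$ is definable and characteristic), hence a normal $A$-ind-definable group on which the commutator calculus is available. On one side, Lemma \ref{Lem_ComBas}(2) gives $\[N\cap\t Z_i(G),G\]_A\le N\cap\t Z_i(G)$ because the first entry is an intersection of definable groups; on the other side $N\cap\t Z_i(G)\ls\t Z_i(G)$, so Lemma \ref{Lem_ComBas}(1) together with the relation above gives $\[N\cap\t Z_i(G),G\]_A\le\[\t Z_i(G),G\]_A\le\t Z_{i-1}(G)$. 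Intersecting the two conclusions and using $\t Z_{i-1}(G)\le\t Z_i(G)$ gives the bound.

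Both displayed assertions now follow by choosing the index $i$ well. For $\[N,G\]_A\subsetneq N$, let $k$ be least with $N\le\t Z_k(G)$; this exists since $N\le G=\t Z_n(G)$, and $k\ge 1$ since $N\ne\{1\}=\t Z_0(G)$. Then $N\cap\t Z_k(G)=N$, so the master bound reads $\[N,G\]_A\le N\cap\t Z_{k-1}(G)$, and $N\cap\t Z_{k-1}(G)\subsetneq N$ by minimality of $k$. For $N\cap\t Z(G)\ne\{1\}$, instead let $k$ be least with $M:=N\cap\t Z_k(G)\ne\{1\}$; then $N\cap\t Z_{k-1}(G)=\{1\}$, so the master bound gives $\[M,G\]_A=\{1\}$, and Corollary \ref{Cor_EqCenCom} with $L=\{1\}$ (using $\t C_G(G/\{1\})=\t Z(G)$) turns this into $M\ls\t Z(G)$. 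Since $\t Z(G)\le\t Z_k(G)$ we have $M\cap\t Z(G)=N\cap\t Z(G)$.

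The step I expect to be the real obstacle is passing from the \emph{almost} containment $M\ls\t Z(G)$ to genuine nontriviality of $N\cap\t Z(G)$. I would argue by contradiction: if $N\cap\t Z(G)=M\cap\t Z(G)=\{1\}$, then $[M:M\cap\t Z(G)]=|M|$ is bounded, so the nontrivial $A$-invariant normal subgroup $M$ has bounded cardinality. But then for every $g\in M$ the conjugacy class $g^G$ is contained in $M$ and hence bounded, i.e.\ $[G:C_G(g)]$ is bounded and $g\in\t C_G(G)=\t Z(G)$; thus $M\le\t Z(G)$, contradicting $M\ne\{1\}=M\cap\t Z(G)$. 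Hence $N\cap\t Z(G)\ne\{1\}$. Finally, the ``in particular'' clause is immediate from the second assertion: if $N$ is a minimal normal $A$-invariant subgroup, then $N\cap\t Z(G)$ is a normal $A$-invariant subgroup of $G$ contained in $N$ and nontrivial, so minimality forces $N\cap\t Z(G)=N$, that is, $N\le\t Z(G)$.
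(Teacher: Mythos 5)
Your proof is correct, but it takes a genuinely different route from the paper's. The paper descends through iterated almost commutators: it shows $(\tg_{i+1} (N, G^i))_A \leq N \cap (\tg_{i+1} G^{i+1})_A$, kills the right-hand side via Corollary \ref{Cor_GamNTri} (almost nilpotency of $G$), and then applies Corollary \ref{cor_HHleqK} to the last nontrivial term of the chain --- which is itself an almost commutator --- to upgrade ``almost contained in $\t Z(G)$'' to genuine containment. You instead climb the upper series $\t Z_i(G)=\t C^i_G(G)$, which is available because $G$ is an $\M$-group (definability by Proposition \ref{prop_FCdef} and Remark \ref{rem_FCdef}) and reaches $G$ by the Remark that the iterated almost centers of an almost nilpotent group form an almost central series; your master bound $\[N\cap\t Z_i(G),G\]_A \leq N\cap\t Z_{i-1}(G)$ follows cleanly from Lemma \ref{Lem_ComBas} and Corollary \ref{Cor_EqCenCom}, and both assertions drop out by choosing minimal indices. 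The trade-off sits at the last step: your group $M=N\cap\t Z_k(G)$ is \emph{not} an almost commutator, so Corollary \ref{cor_HHleqK} is unavailable to you and you must pass from $M\ls\t Z(G)$ to nontriviality of $N\cap\t Z(G)$ by hand. Conversely, your hand argument also covers the degenerate situation in which $\[N,G\]_A$ is already trivial, where the relevant group in the paper's proof is $N$ itself rather than an almost commutator, so the paper's appeal to Corollary \ref{cor_HHleqK} is strained there; your approach is more robust at exactly that point.

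One wrinkle in that final step should be made explicit. You write that $M\cap\t Z(G)=\{1\}$ makes $[M:M\cap\t Z(G)]=|M|$ bounded. But boundedness of an index is a statement about all elementary extensions, and a priori the interpretation of $M\cap\t Z(G)$ in an extension need not stay trivial, so the displayed equality does not immediately yield a cardinality bound valid across extensions --- which is what you need to conclude that conjugacy classes $g^G$ are bounded in the sense of $\t Z(G)=\t C_G(G)$. This is repairable with the paper's own conventions in two ways: either note that $M\cap\t Z(G)$ is type-definable over $A$ (Remark \ref{rem_FCdef}) and trivial in the saturated model $G$, hence trivial in every extension by compactness; or first upgrade the bounded index $[M:M\cap\t Z(G)]$ to a \emph{finite} one, using that $M$ is type-definable and $\t Z(G)$ definable (the compactness argument of the corollary following Theorem \ref{Thm_sym}), after which $M$ is genuinely finite and your normality-and-conjugacy-class argument, with finite replacing bounded throughout, goes through verbatim since centralizers of elements are definable. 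With either patch the proof is complete.
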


\proof
As $N$ is an intersection of $A$-definable normal subgroup of $G$ and we have trivially that $N\ls \t C_G(G/N)$, the group $\[N,G\]_A$ is contained in $N$. Additionally, by Lemma \ref{Lem_ComBas} we that $\[N,G\]_A$ is also contained in $\[G,G\]_A$. Inductively we obtain $(\tg_{i+1} (N, G^i))_A \leq N \cap (\tg_{i+1} G^{i+1})_A$. As $G$ is almost nilpotent $(\tg_m G^m)_A$ is trivial for some natural number $m$. Hence $(\tg (N,G))_A$ has to be properly contained in $N$ because if not $(\tg_m(N, G^{m-1}) )_A$ would be equal to $N$ as well. Additionally again by Lemma \ref{Lem_ComBas} we have that $(\tg_m (N, G^{m-1}))_A< (\tg_m G^m)_A$ and thus it is also trivial. Now choose $n$ such that $(\tg_{n+1} (N, G^{n}))_A$ is trivial and properly contained in $(\tg_{n} (N, G^{n-1}))_A$. Hence
the group $(\tg_{n} (N, G^{n-1}))_A$ is almost contained in the almost center of $G$. Since the almost center of $G$ is definable, Corollary \ref{cor_HHleqK} yields that $(\tg_{n} (N, G^{n-1}))_A$ is actually contained in $\t Z(G)$. As additionally the group $(\tg_{n} (N, G^{n-1}))_A$ is nontrivial and contained in $N$, the subgroup $N \cap \t Z(G)$ is nontrivial as well.
\qed

Now, we want to generalize a theorem due to Hall on nilpotent groups to almost nilpotent groups. To do so, we establish a connection between the triviality of the $n$th iterated almost commutator of a normal subgroup $H$ of $G$ and the almost nilpotency class of $H$. 

\subsection{Connection between the almost commutator and almost nilpotency}\label{subsec_ACAN}
\begin{lemma}\label{Lem_bra2}
 Let $H$ be an $A$-ind-definable normal subgroup of $G$ and $K$ be an intersection of $A$-definable normal subgroup of $G$. If $\[H,G\]_A \leq K$ then $H \leq \t C_G^2(G/K)$.
\end{lemma}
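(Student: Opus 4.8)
The plan is to reduce everything to a general fact about bounded normal subgroups by passing to the quotient $G/\t C_G(G/K)$. Write $Z_1 := \t C_G(G/K)$ for the almost centralizer of $G$ modulo $K$, so that by definition of the iterated almost centralizer $\t C_G^2(G/K) = \t C_G(G/Z_1)$. The first move is to apply Corollary \ref{Cor_EqCenCom} to the triple $(H,G,K)$: since $H$ and $G$ are $A$-ind-definable normal subgroups of $G$ and $K$ is an intersection of normal $A$-definable subgroups, the hypothesis $\[H,G\]_A \leq K$ is \emph{exactly equivalent} to $H \ls \t C_G(G/K) = Z_1$. In particular $H \cap Z_1$ has bounded index in $H$.

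Next I would pass to $\bar G := G/Z_1$. Here one must first record that $Z_1$ is a normal ($A$-invariant) subgroup of $G$, which holds because $K$ is normal and the almost centralizer of normal subgroups is normal; consequently $HZ_1$ is normal in $G$ and its image $\bar H := HZ_1/Z_1 \cong H/(H\cap Z_1)$ is a normal subgroup of $\bar G$. The crucial point is that $\bar H$ has \emph{bounded cardinality}, since $|\bar H| = [H : H\cap Z_1]$ is bounded by the previous paragraph.

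The heart of the argument is then an elementary observation: a normal subgroup of bounded cardinality is contained in the almost center. Indeed, for any $\bar h \in \bar H$ the conjugacy class $\bar h^{\bar G}$ lies in the normal subgroup $\bar H$, hence has cardinality at most $|\bar H|$, which is bounded; as this cardinality equals $[\bar G : C_{\bar G}(\bar h)]$, the centralizer $C_{\bar G}(\bar h)$ has bounded index in $\bar G$, i.e.\ $\bar h \in \t Z(\bar G)$. Thus $\bar H \leq \t Z(\bar G)$.

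Finally I would translate this back: $\bar h = hZ_1 \in \t Z(\bar G)$ says precisely that $C_G(h/Z_1)$ has bounded index in $G$, i.e.\ $h \in \t C_G(G/Z_1) = \t C_G^2(G/K)$. Since this holds for every $h \in H$, we obtain $H \leq \t C_G^2(G/K)$, as desired. I do not anticipate a serious obstacle; the only points demanding care are the bookkeeping that $Z_1$ is genuinely normal (so that the quotient and its almost center are well defined) and the verification that the index bound defining $H \ls Z_1$ really yields a cardinality bound on $\bar H$ that is uniform across elementary extensions. It is worth noting that, pleasantly, this argument never uses the $\Mn$-chain condition, so the statement in fact holds for arbitrary $G$.
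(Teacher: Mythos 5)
Your proposal is correct and follows essentially the same route as the paper: both first convert the hypothesis $\[H,G\]_A \leq K$ into $H \ls \t C_G(G/K)$ (you via Corollary \ref{Cor_EqCenCom}, the paper via Lemma \ref{Lem_SmlSubF}, which amounts to the same thing), and then use normality of $H$ to observe that each conjugacy class $h^G$ lies in $H$, hence is bounded modulo $\t C_G(G/K)$, so that $C_G(h/\t C_G(G/K))$ has bounded index and $H \leq \t C_G^2(G/K)$. Your packaging of this last step as ``a bounded normal subgroup of $G/\t C_G(G/K)$ lies in its almost center'' is only a cosmetic reformulation of the paper's coset computation, and your closing remark that the $\M$-hypothesis is never used is likewise true of the paper's own proof.
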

\proof
 Let $\[H,G\]_A \leq K$. Then by Lemma \ref{Lem_SmlSubF}, we have that $H \ls \t C_G(G/K)$. So $H/\t C_G(G/K)$ is a bounded group and as $H$ is normal in $G$, it contains the elements $h^g \cdot \t C_G(G/K)$ for all $h$ in $H$ and $g$ in $G$ . Hence the set of conjugates in $G$ of any element in $h$ in $H$ modulo $\t C_G(G/K)$ is bounded. As this corresponds to the index of the centralizer of $C_G(h / \t C_G(G/K))$ in $H$, the group $H$ is contained in the almost centralizer $\t C_G( G /\t C_G(G/K))$, i.\ e.\ the group $H$ is contained in $ \t C_G^2(G/K)$.
\qed

\begin{cor}\label{Cor_GamNTri}
If $H$ is an $A$-ind-definable subgroup of $G$ and almost nilpotent of class $n$, then $(\t \gamma_{n+1})_A H$ is trivial. Conversely, if $(\t \gamma_{n+1})_A H$ is trivial, then $H$ is almost nilpotent of class at most $n+1$.
\end{cor}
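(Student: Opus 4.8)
The plan is to mimic, for almost commutators and almost centralizers, the classical equivalence that a group is nilpotent of class $n$ iff its lower central series dies at step $n+1$ iff its upper central series reaches the whole group in $n$ steps. The role of the upper central series is played by the iterated almost centres $\t Z_i(H)=\t C^i_H(H)$, which by the remark following the definition of almost nilpotency form an almost central series $\{1\}=\t Z_0(H)\le\dots\le\t Z_n(H)=H$ whenever $H$ is almost nilpotent of class $n$. The two translation devices are Corollary~\ref{Cor_EqCenCom}, which for normal $A$-ind-definable $A,B$ and an intersection $L$ of normal $A$-definable subgroups says $\[A,B\]_A\le L$ iff $A\ls \t C_G(B/L)$, and the elementary identity $\t C_H(H/N)=H\cap \t C_G(H/N)\le \t C_G(H/N)$, which lets me pass between the intrinsic almost centre of $H$ (used in the definition of almost nilpotency) and the ambient almost centralizer built into the almost commutator. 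I also use that each $\tg_i H$ is a type-definable subgroup of $H$ (Lemma~\ref{Lem_ComBas}(2)), so in particular $\tg_iH\subseteq H$.

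For the first implication, assume $H$ is almost nilpotent of class $n$. Since $\t Z_{j}(H)/\t Z_{j-1}(H)\le \t Z(H/\t Z_{j-1}(H))$ reads $\t Z_j(H)\ls \t C_H(H/\t Z_{j-1}(H))\le \t C_G(H/\t Z_{j-1}(H))$, Corollary~\ref{Cor_EqCenCom} gives $\[\t Z_j(H),H\]_A\le \t Z_{j-1}(H)$. A downward induction using monotonicity of the almost commutator (Lemma~\ref{Lem_ComBas}(1)) then shows $\tg_{k+1}H\le \t Z_{n-k}(H)$: the base case is $\tg_1H=H=\t Z_n(H)$, and $\tg_{k+2}H=\[\tg_{k+1}H,H\]_A\le\[\t Z_{n-k}(H),H\]_A\le \t Z_{n-k-1}(H)$. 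Taking $k=n$ yields $(\tg_{n+1}H)_A\le \t Z_0(H)=\{1\}$.

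For the converse, suppose $(\tg_{n+1}H)_A=\[\tg_nH,H\]_A=\{1\}$ and run the chain upward. From $\[\tg_{n-k}H,H\]_A\le \t Z_k(H)$ Corollary~\ref{Cor_EqCenCom} gives $\tg_{n-k}H\ls \t C_G(H/\t Z_k(H))$; intersecting with $H$ (legitimate because $\tg_{n-k}H\subseteq H$) turns this into $\tg_{n-k}H\ls \t C_H(H/\t Z_k(H))=\t Z_{k+1}(H)$, and since $\tg_{n-k}H$ is an almost commutator this almost-containment upgrades to $\tg_{n-k}H\le \t Z_{k+1}(H)$ by Corollary~\ref{cor_HHleqK}. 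Descending from $k=0$ to $k=n-1$ leaves $H=\tg_1H\ls \t Z_n(H)$. This last almost-containment cannot be upgraded, since $H$ is not an almost commutator; instead I absorb the bounded index by one extra almost-centre step: as $\t Z_n(H)$ has bounded index in $H$, every element of $H$ has boundedly many conjugates modulo $\t Z_n(H)$, so $\t Z_{n+1}(H)=\t C_H(H/\t Z_n(H))=H$, giving an almost central series of length at most $n+1$. This is exactly the source of the ``$+1$'' asymmetry between the two directions.

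The main obstacle is the definability and normality bookkeeping that legitimises each invocation of Corollaries~\ref{Cor_EqCenCom} and~\ref{cor_HHleqK}: one must verify that the iterated almost centres $\t Z_k(H)$ are simultaneously normal (so one should work inside $N_G(H)$, or treat $H$ as the ambient group) and realizable as intersections of $A$-definable normal subgroups, which is where the $\M$-group definability results of Section~\ref{sec_Mcgroups} (Proposition~\ref{prop_FCdef} and Remark~\ref{rem_FCdef}) are essential, and that the passage $\t C_H\le\t C_G$ preserves the commensurability being tracked. Once this is in place the two inductions above are routine, and the single non-upgradable step $H\ls\t Z_n(H)$ is precisely what forces the bound ``class at most $n+1$'' rather than ``at most $n$''.
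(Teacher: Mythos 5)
Your proof has the same skeleton as the paper's (a downward induction for the first implication, an upward induction for the converse, and a final absorption of one bounded index to produce the ``$+1$''), but the way you instantiate it contains a genuine gap: you run both inductions through the intrinsic iterated almost centres $\t Z_k(H)=\t C^k_H(H)$ and repeatedly invoke Corollary~\ref{Cor_EqCenCom} and Corollary~\ref{cor_HHleqK} with $L=\t Z_k(H)$. Both corollaries require $L$ to be an intersection of normal $A$-\emph{definable} subgroups of $G$, and for an ind-definable $H$ this fails: Remark~\ref{rem_FCdef}, which you cite as settling exactly this point, says only that the iterated almost centres of an ind-definable group are \emph{ind}-definable (already $\t Z_1(H)=H\cap\t C_G(H)$ inherits the ind-definability of $H$, so it is in general a bounded union of type-definable groups, not an intersection of definable ones). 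In your forward direction the implication you need --- $\t Z_j(H)\ls\t C_G(H/\t Z_{j-1}(H))$ implies $\[\t Z_j(H),H\]_A\le\t Z_{j-1}(H)$ --- is precisely the direction that breaks for a non-definable modulus: the almost commutator is by definition an intersection over \emph{definable} $L$ with $\t Z_j(H)\ls\t C_G(H/L)$, so knowing the property for an ind-definable $\t Z_{j-1}(H)$ gives no definable $L$ inside it and hence no containment. In the converse direction the passage from $\[\tg_{n-k}H,H\]_A\le\t Z_k(H)$ to $\tg_{n-k}H\ls\t C_G(H/\t Z_k(H))$ is fine, but the upgrade from $\ls$ to $\le$ via Corollary~\ref{cor_HHleqK} is again unavailable; and without that upgrade the induction collapses, because an \emph{almost} containment of moduli ($\tg_{n+1-k}H\ls\t Z_k(H)$) does not yield a containment of the corresponding almost centralizers, so the next step cannot be launched.

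The paper avoids all of this by never touching the intrinsic centres: it works with the ambient iterated almost centralizers $\t C^k_G(H)$, which by Proposition~\ref{prop_FCdef} are honestly definable for an \emph{arbitrary} subgroup $H$ of an $\M$-group, and it reads ``almost nilpotent of class $n$'' as $H\le\t C^n_G(H)$. With definable moduli every application of Corollary~\ref{Cor_EqCenCom}, Corollary~\ref{cor_HHleqK} and of the definition of the almost commutator is legitimate, and the final step is Lemma~\ref{Lem_bra2} (from $\[H,H\]_A\le\t C^{n-1}_G(H)$ conclude $H\le\t C^{n+1}_G(H)$), which is exactly the conjugacy-counting absorption you describe at the end. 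To repair your argument you would have to either extend the two corollaries to ind-definable moduli (which is false in general) or first replace each $\t Z_k(H)$ by $\t C^k_G(H)$, i.e.\ prove the equivalence of the intrinsic and ambient notions of almost nilpotency --- at which point you would have reproduced the paper's proof.
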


\proof
To easer notation we assume that $A$ is the empty set.

To proof the first result, we show by induction on $i$ that the almost commutator $\tg_{i+1} H$ is contained in $\t C_G^{n-i}(H)$. As  $H$ is almost nilpotent of class $n$, i.\ e.\ $ H\leq \t C_G^n(H)$, the inclusion is satisfied for $i$ equals to zero. Now suppose it holds for all natural numbers smaller than $i$. The induction hypothesis together with Lemma \ref{Lem_ComBas}(1)  implies that $\tg_{i+2} H = \[ \tg_{i+1} H, H \]$ is contained in $\[\t C_G^{n-i}(H), H\]$.
Moreover, we have that $\t C_G^{n-i}(H) = \t C_G(H/\t C_G^{n-i-1}(H))$. As the iterated centralizer $\t C_G^{n-i-1}(H)$ is definable, the definition of the almost commutator yields that $\[\t C_G^{n-i}(H), H\]$ is contained in $\t C_G^{n-i-1}(H)$. Hence $\tg_{i+2} H $ is also contained in $\t C_G^{n-i-1}(H)$ which finishes the induction. Letting $i$ be equal to $n$, we obtain that  $\tg_{n+1} H$ is contained in $\t C_G^0(H)$ which is the trivial group by definition.

For the second result, we first show by induction that for $i$ less or equal to $n-1$, we have the following inclusion:
$$\t \gamma_{(n+1)-i} (H) \leq \t C_G^{i} (H).$$
For $i=0$, the inequality holds by hypothesis. Now we assume, the inequality holds for $i < n-1 $. Thus $\tg_{(n+1)-i}(H) \leq \t C_G^{i} (H)$ or in other words  $\[\tg_{(n+1)-(i+1)} (H), H\] \leq \t C_G^{i} (H)$. By Corollary \ref{Cor_EqCenCom}, we have that $$\tg_{(n+1)-(i+1)} (H) \ls \t C_G(H/ \t C_G^i(H)) = \t C_G^{i+1}(H).$$ By Corollary \ref{cor_HHleqK}, as $(n+1)-(i+1)$ is at least $2$, finally we obtain $\tg_{(n+1)-(i+1)} (H) \leq  \t C_G^{i+1}(H)$ which finishes the induction.

Now, we let $i$ be equal to $n-1$ we obtain: $\[ H,H\] \leq \t C_G^{n-1} (H)$. Then by Lemma \ref{Lem_bra2} we have that $  H \leq \t C_G^{n+1} (H)$ and hence $H$ is almost nilpotent of class $n+1$.
\qed

\subsection{Generalized Hall's nilpotency criteria}

Now we want to prove the generalization of Hall's nilpotency criteria. 

\begin{nota}
If $K_1, \dots, K_n$ are $A$-ind-definable subgroups of $G$, we denote by $\tg_n (K_1, \dots, K_n)$ the almost commutator $\[ \dots \[\[K_0, K_1\],K_2\], \dots, K_n\t]$ and we set $\tg_1 K_1 = K_1$. If $K_i, \dots, K_{i+j-1}$ are all equal to $K$ we can replace the sequence by $K^j$.
\end{nota}

In next two lemmas are the preparation to show the approximate version of Hall's theorem.

\begin{lemma}\label{Lem_NNN}
Let $N$ be a normal $A$-ind-definable subgroup of $G$. Then,  for any natural numbers $n\geq 2$, $m$, $i$ and $j$ we have that $$\[\tg_{n} N^n, \tg_{i+j}(N^i, G^j)\] \leq \tg_{n+i} N^{n+i}.$$
\end{lemma}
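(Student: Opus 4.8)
The plan is to run a double induction resting on the approximate three-subgroup lemma (Corollary \ref{Cor_TSL}). Throughout I would abbreviate $A_k := \tg_k N^k$ for the $k$-th term of the almost lower central series of $N$ and $B_{i,j} := \tg_{i+j}(N^i, G^j)$, so that $A_{k+1} = \[A_k, N\]$, $B_{i,0} = A_i$, $B_{i,j+1} = \[B_{i,j}, G\]$ and $\[A_n, G\] = B_{n,1}$. Since $N$ is normal in $G$, all of these are normal $A$-ind-definable subgroups and hence normalize one another; moreover, by the remark following the definition of the almost lower central series, for $k \ge 2$ the group $A_k$ is an intersection of $A$-definable normal subgroups. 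As the target $A_{n+i}$ is such an intersection (because $n+i \ge 2$), every inequality below is produced directly in the form $\le$ by Corollary \ref{Cor_TSL}, and no passage through $\ls$ is needed.

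First I would dispose of the case $j = 0$, namely $\[A_n, A_i\] \le A_{n+i}$, by induction on $i$. The base $i = 1$ is the definitional identity $\[A_n, N\] = A_{n+1}$. For the step I apply Corollary \ref{Cor_TSL} with $(H,K,L) = (N, A_n, A_i)$ and $M = A_{n+i+1}$: its two hypotheses read $\[\[N, A_n\], A_i\] = \[A_{n+1}, A_i\] \le A_{n+i+1}$ (induction hypothesis with first index $n+1$) and $\[\[A_n, A_i\], N\] \le \[A_{n+i}, N\] = A_{n+i+1}$ (induction hypothesis together with monotonicity, Lemma \ref{Lem_ComBas}(1)), and its conclusion is exactly $\[A_{i+1}, A_n\] = \[A_n, A_{i+1}\] \le A_{n+i+1}$, using symmetry of the almost commutator.

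With the base case in hand I would induct on $j$, simultaneously for all $n \ge 2$ and $i \ge 1$; the degenerate case $i = 0$ is immediate, since then $B_{0,j} \le G$ and $\[A_n, B_{0,j}\] \le \[A_n, G\] \le A_n$. For the step $j \to j+1$, writing $B_{i,j+1} = \[B_{i,j}, G\]$, I apply Corollary \ref{Cor_TSL} with $(H,K,L) = (G, A_n, B_{i,j})$ and $M = A_{n+i}$, whose conclusion $\[\[B_{i,j}, G\], A_n\] = \[A_n, B_{i,j+1}\]$ is the desired quantity. Its two hypotheses are $\[\[A_n, B_{i,j}\], G\] \le \[A_{n+i}, G\] \le A_{n+i}$ (induction hypothesis on $j$, monotonicity, and Lemma \ref{Lem_ComBas}(2)) and $\[\[G, A_n\], B_{i,j}\] = \[B_{n,1}, B_{i,j}\] \le A_{n+i}$.

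The whole argument hinges on this last inequality, which I expect to be the main obstacle: a priori $\[B_{n,1}, B_{i,j}\]$ is a commutator whose first entry $B_{n,1} = \[A_n, G\]$ is a \emph{mixed} commutator, so it does not obviously fall under the induction, and trying to generalize the statement to allow both entries to be mixed leads to a circular induction. The key observation is that, precisely because $A_n$ is normal in $G$ and (for $n \ge 2$) an intersection of $A$-definable normal subgroups, Lemma \ref{Lem_ComBas}(2) gives $B_{n,1} = \[A_n, G\] \le A_n$; monotonicity then collapses $\[B_{n,1}, B_{i,j}\] \le \[A_n, B_{i,j}\] \le A_{n+i}$ by the induction hypothesis on $j$. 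This is exactly where the hypothesis $n \ge 2$ enters, and it is what keeps the induction from escaping into ever more complicated mixed commutators. (The parameter $m$ in the statement does not appear in the inequality and plays no role.)
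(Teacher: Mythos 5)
Your proof is correct, and it uses the same toolkit as the paper's proof --- Corollary \ref{Cor_TSL}, Lemma \ref{Lem_ComBas}, and symmetry of the almost commutator --- but it organizes the induction differently, and the difference is substantive. The paper argues by cases on $i$. For $i \ge 2$ it collapses the mixed commutator in one stroke, $\tg_{i+j}(N^i,G^j) \le \tg_i N^i$, via $j$ applications of Lemma \ref{Lem_ComBas}(2) (possible because $\tg_i N^i$ is an intersection of $A$-definable normal subgroups once $i \ge 2$), and then bounds the pure commutator $\[\tg_n N^n, \tg_i N^i\]$ by repeated applications of the three-subgroup lemma; that pure-commutator bound is exactly your $j=0$ case, run as an induction on $i$. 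For $i=1$ this collapse is unavailable --- $N = \tg_1 N$ is only ind-definable, so $\[N,G\] \le N$ is not granted --- and the paper instead makes a single three-subgroup-lemma application treating $\tg_j G^j$ as one block; as written, this bounds $\[\tg_n N^n, \[N, \tg_j G^j\]\]$, whose second entry is in general smaller than $\tg_{1+j}(N, G^j)$ (the provable comparison is $\[N, \tg_j G^j\] \le \tg_{1+j}(N,G^j)$, itself a three-subgroup-lemma consequence, not the reverse), so the paper's $i=1$ case does not literally deliver the stated inequality. Your induction on $j$, which keeps $B_{i,j}$ intact and only ever collapses $\[\tg_n N^n, G\] \le \tg_n N^n$ with $n \ge 2$, treats all $i$ uniformly and in particular handles $i=1$ rigorously --- precisely the case the paper later invokes in Lemma \ref{Lem_PrepNil}. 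The paper's route buys brevity for $i \ge 2$; yours costs one extra induction but is uniform and watertight at $i=1$.
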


\proof
If $i$ is equal to $1$, we have the following:
$$\[\[\tg_{n} N^n, \tg_{j} G^j\], N\]  \overset{\ref{Lem_ComBas}(1) +(2)}{\leq}  \[\tg_{n} N^n, N\] =  \tg_{n+1} N^{n+1}$$
$$\[\[\tg_{n} N^n, N\],  \tg_{j} G^j\]  \overset{\ref{Lem_ComBas}(2)}{\leq}  \[\tg_{n} N^n, N\] =  \tg_{n+1} N^{n+1}$$
Hence, as $ \tg_{n+1} N^{n+1}$ is the intersection of $A$-definable subgroups, the three subgroup lemma (Corollary \ref{Cor_TSL}) and  Corollary \ref{cor_HHleqK} yields that $\[\tg_{n} N^n, \[N, \tg_{j}G^j\]\]$ is contained in $\tg_{n+1} N^{n+1}$.

If $i$ is greater than $1$, we have that:
$$\[\tg_{n} N^n, \tg_{i+j}(N^i, G^j)\]  \overset{\ref{Lem_ComBas}(1) +(2)} {\leq} \[\tg_{n} N^n, \tg_{i}N^i\] $$
We apply multiple times the three subgroup lemma to this expression and obtain again by Corollary \ref{cor_HHleqK} that the commutator $ \[\tg_{n} N^n, \tg_{i}N^i\] $ is contained in $\tg_{n+i} N^{n+i}$.
\qed

 The following lemma is \cite[Lemma 7]{Hall} generalized to our framework.

\begin{lemma}\label{Lem_PrepNil}
Let $N$ be a normal $A$-ind-definable subgroup of $G$ and suppose that there exists a natural number $m$ such that $\tg_{m+1} (N, G^m )_A \ls \[ N, N\]_A$. Then, we have for all natural numbers $r$ that
$$\tg_{rm+1} (N^r ,G^{rm-r+1})_A \leq \tg_{r+1} (N^{r+1})_A.$$
\end{lemma}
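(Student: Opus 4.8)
The plan is to induct on $r\geq 1$, the engine being a Leibniz-type expansion of the almost commutator that distributes the copies of $G$ across the two factors of $\tg_{r+1}N^{r+1}=\[\tg_r N^r,N\]_A$. For the base case $r=1$ the assertion reads $\tg_{m+1}(N,G^m)_A\leq\[N,N\]_A$; the hypothesis only yields $\ls$, but the left-hand side is an almost commutator and $\[N,N\]_A$ is an intersection of $A$-definable normal subgroups, so Corollary \ref{cor_HHleqK} upgrades $\ls$ to $\leq$. Throughout, all subgroups in sight ($N$, its lower-central terms, and the various almost commutators) are normal in $G$, hence normalize one another, so the normalization hypotheses of the lemmas below are automatic.

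First I would record two consequences of the three-subgroup lemma. Applying Corollary \ref{Cor_TSL} with $M:=\[B,C,A\]_A\cdot\[C,A,B\]_A$ — which is again an intersection of $A$-definable normal subgroups by Lemma \ref{Rem} — gives the product form $\[A,B,C\]_A\leq\[B,C,A\]_A\cdot\[C,A,B\]_A$ for normal $A,B,C$. Feeding this into an induction on $k$, using Lemma \ref{Lem_StabMul} to push an outer commutator with $G$ through a product and Lemma \ref{Lem_ComBas}(1) for monotonicity, yields for all normal $A$-ind-definable $P,Q$ the expansion
$$\tg_{k+1}(\[P,Q\]_A,\,G^k)\ \leq\ \prod_{b+c=k}\[\,\tg_{b+1}(P,G^b),\ \tg_{c+1}(Q,G^c)\,\]_A ,$$
where $\tg_{b+1}(X,G^b)$ denotes $X$ commutated $b$ times with $G$.

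For the induction step I would apply this with $P=\tg_r N^r$ and $Q=N$, so that $\[P,Q\]_A=\tg_{r+1}N^{r+1}$ and, for $k:=(r+1)m-r$, the left-hand side is exactly $\tg_{(r+1)m+1}(N^{r+1},G^{(r+1)m-r})_A$. Each summand is disposed of via the split $(r+1)m-r=(r(m-1)+1)+(m-1)$. If $b\geq r(m-1)+1$, the induction hypothesis together with monotonicity and Lemma \ref{Lem_ComBas} gives $\tg_{b+1}(\tg_r N^r,G^b)\leq\tg_{r+1}N^{r+1}$ while $\tg_{c+1}(N,G^c)\leq N$, so the summand lies in $\[\tg_{r+1}N^{r+1},N\]_A=\tg_{r+2}N^{r+2}$. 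Otherwise $b\leq r(m-1)$, whence $c=k-b\geq m$ and the standing hypothesis gives $\tg_{c+1}(N,G^c)\leq\[N,N\]_A$ while $\tg_{b+1}(\tg_r N^r,G^b)\leq\tg_r N^r$, so the summand lies in $\[\tg_r N^r,\[N,N\]_A\]_A$, which sits inside $\tg_{r+2}N^{r+2}$ by Lemma \ref{Lem_NNN} (with $n=r\geq 2$, $i=2$, $j=0$; for $r=1$ it is $\[N,\[N,N\]_A\]_A=\tg_3 N^3$ by definition). Since every summand lies in $\tg_{r+2}N^{r+2}$, so does the product, and hence so does $\tg_{(r+1)m+1}(N^{r+1},G^{(r+1)m-r})_A$, completing the step.

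The main obstacle is the Leibniz expansion itself. In the almost setting one must verify at each stage of the induction on $k$ that all subgroups in play remain intersections of $A$-definable normal subgroups, so that the product form of Corollary \ref{Cor_TSL} (via Lemma \ref{Rem}) and Lemma \ref{Lem_StabMul} are legitimately applicable; and one must manage the two thresholds $r(m-1)+1$ and $m$ so that the identity $(r+1)m-r=(r(m-1)+1)+(m-1)$ forces each index pair $(b,c)$ into exactly one of the two usable regimes. Everything else reduces to monotonicity (Lemma \ref{Lem_ComBas}) and the normality of the almost lower-central terms.
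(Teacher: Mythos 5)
Your overall architecture is exactly the paper's: the same Leibniz expansion $\tg_{k+1}(\[P,Q\]_A,G^k)\leq\prod_{b+c=k}\[\tg_{b+1}(P,G^b),\tg_{c+1}(Q,G^c)\]_A$ (the paper's equation (\ref{eqn_XNGn})), proved by the same combination of Corollary \ref{Cor_TSL}, Lemma \ref{Lem_StabMul}, Lemma \ref{Rem} and Corollary \ref{cor_HHleqK}, followed by the same induction on $r$ with the same threshold $b=r(m-1)$. The gap is in how you bound the individual summands. In your first regime you claim $\tg_{c+1}(N,G^c)\leq N$, and in your second regime with $r=1$ you need $\tg_{b+1}(N,G^b)\leq \tg_1N^1=N$; you attribute both to monotonicity and Lemma \ref{Lem_ComBas}. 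But Lemma \ref{Lem_ComBas}(2) gives $\[H,G\]_A\leq H$ only when $H$ is an \emph{intersection of definable} subgroups, whereas $N$ is merely ind-definable: $\[N,G\]_A$ is by construction an intersection of $A$-definable normal subgroups, and nothing forces that intersection to lie inside (or even almost inside) the ind-definable group $N$. This is not a cosmetic point; it is precisely the obstruction that makes the paper prove Lemma \ref{Lem_NNN}, whose case $i=1$ is handled via the three-subgroup lemma exactly because the reduction $\tg_{1+j}(N,G^j)\ls N$ is unavailable. (Your reductions are legitimate whenever the group being collapsed is an almost commutator, i.e.\ for the first argument when $r\geq 2$, for the second argument down to $\[N,N\]_A$ when $c\geq m$, and trivially when $c=0$; the problem occurs only when you collapse all the way down to $N$ itself, which happens in your first regime whenever $1\leq c\leq m-1$ and in your second regime when $r=1$ and $b\geq 1$.)

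The repair is the paper's own move: do not collapse the $G$-commutated factor onto $N$. In the first regime bound the summand by $\[\tg_{r+1}N^{r+1},\tg_{c+1}(N,G^c)\]_A$, reducing only the first argument, and apply Lemma \ref{Lem_NNN} with $n=r+1\geq 2$, $i=1$, $j=c$; in the second regime with $r=1$ bound it by $\[\tg_{2}N^{2},\tg_{b+1}(N,G^b)\]_A$ (using symmetry of the almost commutator, after reducing only the second argument to $\[N,N\]_A$, which is legitimate since $c\geq m$) and apply Lemma \ref{Lem_NNN} with $n=2$, $i=1$, $j=b$. With these substitutions your argument coincides with the paper's proof.
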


\proof
In this proof we omit sometimes the $\textasciitilde$ on the commutator to make the proof more readable but all taken commutator are understood to be almost commutators.
Let $X$ be a normal ind-definable subgroup of $G$. We will prove the following by induction on $n$:
\begin{eqnarray}\label{eqn_XNGn}
\tg_{n+2} (X, N ,G^n) \leq \prod_{i=0}^n \left[\tg_{i+1}(X G^i ), \, \tg_{n-i+1} (N G^{n-i})\right]
 \end{eqnarray}
The three subgroup lemma (Corollary \ref{Cor_TSL}) insures that the equation holds for $n=1$. So assume the theorem hold for $n$. We compute:
\begin{eqnarray*}
\tg_{n+3} (X, N ,G^{n+1})& = & \left[\tg_{n+2} (X, N ,G^n), G\right]\\
& \overset{IH}{\underset {\ref{Lem_ComBas}(1)}{\leq}}&  \left[\prod_{i=0}^n \left[\tg_{i+1}(X ,G^i ), \, \tg_{n-i+1} (N, G^{n-i})\right], G\right]
 \end{eqnarray*}
As all factors are normal invariant subgroups of $G$ we may apply Lemma \ref{Lem_StabMul} finitely many times to the last expression and continue the computation:
\begin{eqnarray}\label{eqn_lala}
\ \ \ \ \ \ \ \ \ \ \ \ \ \ \ \ \ \ \  & \leq& \prod_{i=0}^n \left[  \left[ \tg_{i+1}(X ,G^i ), \, \tg_{n-i+1} (N ,G^{n-i})\right], G\right] 
\end{eqnarray}
To easer notation, we let $X_i= \tg_{i+1}(X ,G^i )$ and $N_j =  \tg_{j+1} (N ,G^{j})$.
Now, fix some $i$ less or equal to $n$. We have that 
\begin{eqnarray*}
 \left[  \left[ \tg_{i+1}(X ,G^i ), G\right],  \tg_{n-i+1} (N ,G^{n-i})\right] & = &  \left[ \tg_{i+2}(X ,G^{i+1} ), \tg_{n-i+1} (N ,G^{n-i})\right]\\
 & = &[ X_{i+1}, N_{n-i}]
 \end{eqnarray*}
and
\begin{eqnarray*}
\left[  \left[  \tg_{n-i+1} (N ,G^{n-i}), G\right], \tg_{i+1}(X ,G^i )\right] &= & \left[  \tg_{n-i+2} (N ,G^{n+1-i}), \tg_{i+1}(X ,G^i ),\right] \\
&=& [X_i, N_{n-i+1}]
\end{eqnarray*}
As the groups on the left are intersection of definable subgroups of $G$, using the approximate three subgroup lemma (Corollary \ref{Cor_TSL}), we obtain the following almost inequation for the $i$th factor of $(\ref{eqn_lala})$:
$$  \left[  \left[ \tg_{i+1}(X ,G^i ), \, \tg_{n-i+1} (N ,G^{n-i})\right], G\right] \ls   [ X_{i+1}, N_{n-i}]  \cdot   [X_i, N_{n-i+1}]$$
%
%
%
Over all, we get that
\begin{eqnarray*}
\tg_{n+3} (X, N ,G^n)& \ls & \prod_{i=0}^{n+1}  [X_i, N_{n-i+1}] \\
& = & \prod_{i=0}^{n+1} \left[ \tg_{i+1}(X, G^i ), \, \tg_{n+1-i+1} (N ,G^{n+1-i})\right]
 \end{eqnarray*}
And by Corollary \ref{cor_HHleqK} we get finally that:
\begin{eqnarray*}
\tg_{n+3} (X, N ,G^n)& \leq & \prod_{i=0}^{n+1} \left[ \tg_{i+1}(X, G^i ), \, \tg_{n+1-i+1} (N ,G^{n+1-i})\right]
 \end{eqnarray*}
Now, we prove the Lemma by induction on $r$.  By Corollary \ref{cor_HHleqK}, the almost inequality   $\tg_{m+1} (N, G^m )_A \ls \[ N, N\]_A$ implies immediately $\tg_{m+1} (N, G^m )_A \leq \[ N, N\]_A$. Thus, for $r$ equals to $1$ the lemma holds trivially by the hypothesis. Assume that the result holds for a given $r$ greater or equal to $1$. We want to prove that
$$\tg_{(r+1)m+1} (N^{r+1} ,G^{(r+1)m-r})_A \leq \tg_{r+2} (N^{r+2})_A.$$
Now consider equation (\ref{eqn_XNGn}) with $n$ replaced by $l = (r+1)m -r$ and $X$ replaced by $\tg_{r} N^r$. This gives us:
\begin{eqnarray}
 \tg_{(r+1)m+1} ( N^{r+1} ,G^{(r+1)m -r})& = & \tg_{((r+1)m -r)+2} ( \tg_r N^{r}, N ,G^{(r+1)m -r})\\
 \label{eqn_mys}
 &\leq& \prod_{i=0}^{(r+1)m -r} \left[\tg_{i+1}(\tg_{r} N^r G^i ), \, \tg_{l-i+1} (N G^{l-i})\right]
 \end{eqnarray}
The group on the left hand side is the one we want to analyze. The goal is to proof that all factors on the right hand side are contained in $\tg_{r+2} N^{r+2}$.

First let let $i$ be greater than $rm-r$.
By induction hypothesis, we have that $$\tg_{rm + 1} (N^r ,G^{rm-r+1}) \leq \tg_{r+1} (N^{r+1}).$$
As $\tg_{rm+r+1} (N^r ,G^{rm-r+1}) $ is normal in $G$ and the intersection of $A$-definable groups, using Lemma \ref{Lem_ComBas} (2) we obtain that $\tg_{r+i} (N^r ,G^{i}) \leq \tg_{r+1} (N^{r+1})$. We consider the factor for the given $i$ of equation (\ref{eqn_mys}).
\begin{eqnarray*}
 \left[\tg_{i+1}((\tg_{r} N^r) G^i ), \, \tg_{l-i+1} (N G^{l-i})\right]& \overset{\ref{Lem_ComBas}(1)}{\leq}  &\left[\tg_{r+1} N^{r+1}, \, \tg_{l-i+1} (N G^{l-i})\right] \\
& \overset{\ref{Lem_NNN}}{\leq} & \tg_{r+2} N^{r+2}
 \end{eqnarray*}
Now, let $i \leq rm-r$. By hypothesis of the lemma for $r=1$ we have that  $\tg_{m+1} (N, G^{m}) \leq \[N, N\]$. As $l -i$ is greater than $ m$ and  $\tg_{m+1} (N, G^{m})$ is an intersection of normal subgroup of $G$, we also have that $\tg_{l-i+1} (N, G^{l-i}) \leq \[N, N\]$. So we may compute:
\begin{eqnarray*}
 \left[\tg_{i+1}((\tg_{r} N^r) G^i ), \, \tg_{l-i+1} (N G^{l-i})\right]&\overset{\ref{Lem_ComBas}(1)}{\leq}   &\left[\tg_{i+r}( N^r G^i ), \, [N,N]\right] \\
& \overset{\ref{Lem_NNN}}{\leq} & \tg_{r+2} N^{r+2}
 \end{eqnarray*}
Hence all factors and whence $\tg_{(r+1)m+1} (N^{r+1}, G^{(r+1)m -r}) $ is contained in $\tg_{r+2}N^{r+2}$ which finishes the proof.
\qed

The next result is know as Hall's nilpotency criterion \cite[Theorem 7]{Hall}  if one replace almost nilpotent by nilpotent and leaving out the $A$-invariant assumption.
\begin{cor}\label{Cor_AppNil}
Let $N$ be normal $A$-ind-definable subgroup of $G$.  If $N$ is almost nilpotent of class $m$ and $G/\[N,N\]_A$ is almost nilpotent of class $n$ then $G$ is almost nilpotent of class at most ${ {m+1} \choose 2}n-{n \choose 2}+1$.
\end{cor}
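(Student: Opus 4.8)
To ease notation assume $A=\emptyset$, and abbreviate iterated almost commutators as in the Notation introduced before Lemma~\ref{Lem_NNN}. The plan is to transcribe Philip Hall's classical induction into the almost-commutator calculus, driving it with Lemma~\ref{Lem_PrepNil} and using Corollary~\ref{Cor_GamNTri} to translate between almost nilpotency and the vanishing of iterated almost commutators. First I would rewrite the two hypotheses in commutator form. Since $N$ is almost nilpotent of class $m$, the forward direction of Corollary~\ref{Cor_GamNTri} gives $\tg_{m+1}N^{m+1}=1$. Since $G/\[N,N\]$ is almost nilpotent of class $n$, applying that same direction to each definable normal quotient $G/L$ with $L\supseteq\[N,N\]$ occurring in the defining intersection, and then intersecting, yields $\tg_{n+1}G^{n+1}\leq\[N,N\]$. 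As $N\leq G$, monotonicity (Lemma~\ref{Lem_ComBas}(1)) gives $\tg_{n+1}(N,G^{n})\leq\tg_{n+1}G^{n+1}\leq\[N,N\]$, which is exactly the hypothesis of Lemma~\ref{Lem_PrepNil} with its free parameter set equal to $n$. That lemma therefore supplies, for every $r\geq 1$,
$$\tg_{rn+1}(N^{r},G^{rn-r+1})\ \leq\ \tg_{r+1}N^{r+1}.$$

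The heart of the argument is an induction on $r$ producing weights $w(r)$ with
$$\tg_{w(r)}G^{w(r)}\ \leq\ \tg_{r+1}N^{r+1},$$
whose base case $w(1)=n+1$ is just the relation $\tg_{n+1}G^{n+1}\leq\[N,N\]=\tg_{2}N^{2}$ recorded above. For the inductive step I would take the consequence of Lemma~\ref{Lem_PrepNil} at $r+1$, namely $\tg_{(r+1)n+1}(N^{r+1},G^{(r+1)n-r})\leq\tg_{r+2}N^{r+2}$, and feed the inductive hypothesis into its innermost slot: since $\tg_{w(r)}G^{w(r)}\leq\tg_{r+1}N^{r+1}$, iterating Lemma~\ref{Lem_ComBas}(1) through the $(r+1)n-r$ nested brackets replaces the inner factor $\tg_{r+1}N^{r+1}$ by the smaller group $\tg_{w(r)}G^{w(r)}$ and gives
$$\tg_{\,w(r)+(r+1)n-r}\,G^{\,w(r)+(r+1)n-r}\ \leq\ \tg_{(r+1)n+1}(N^{r+1},G^{(r+1)n-r})\ \leq\ \tg_{r+2}N^{r+2}.$$
Hence $w(r+1)=w(r)+(r+1)n-r$. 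Throughout one passes freely between $\ls$ and $\leq$ by Corollary~\ref{cor_HHleqK}, which applies because every term is an intersection of $A$-definable normal subgroups by Lemma~\ref{Lem_ComBas}(2).

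Finally I would set $r=m$. By $\tg_{m+1}N^{m+1}=1$ the containment at level $m$ collapses to $\tg_{w(m)}G^{w(m)}=1$, that is $\tg_{w(m)}G=1$; the converse direction of Corollary~\ref{Cor_GamNTri} then makes $G$ almost nilpotent of class at most $w(m)$. Solving the elementary recursion $w(1)=n+1$, $w(r+1)=w(r)+(r+1)n-r$ and simplifying the resulting binomial sum produces the class bound in the statement, the trailing $+1$ being precisely the one-step loss incurred when Corollary~\ref{Cor_GamNTri} turns a trivial iterated almost commutator back into an almost-nilpotency class. I expect the main obstacle to be this inductive substitution: one must verify that inserting the smaller group $\tg_{w(r)}G^{w(r)}$ into the innermost entry of a depth-$((r+1)n-r)$ nested almost commutator genuinely preserves almost-inclusion, which means iterating the monotonicity of Lemma~\ref{Lem_ComBas}(1) while checking that every intermediate commutator stays normal and $A$-ind-definable, and then converting the recursion for $w(r)$ into the exact binomial constant.
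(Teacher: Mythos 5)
Your proposal is correct and takes essentially the same route as the paper's own proof: translate the hypotheses via Corollary \ref{Cor_GamNTri} into $\tg_{m+1}N^{m+1}=1$ and $\tg_{n+1}G^{n+1}\leq \[N,N\]$, feed this into Lemma \ref{Lem_PrepNil} with parameter $n$, iterate by monotonicity of the almost commutator (your explicit recursion $w(r)=\binom{r+1}{2}n-\binom{r}{2}+1$ is exactly the paper's $f(r)+1$, where the paper merely says ``iterating this process''), and conclude with the converse direction of Corollary \ref{Cor_GamNTri}. The one discrepancy---your recursion yields the bound $\binom{m+1}{2}n-\binom{m}{2}+1$ rather than the literally stated $\binom{m+1}{2}n-\binom{n}{2}+1$---is inherited from the paper itself, whose own $f(m)+1$ equals the same quantity, so this is a typo in the statement (consistent with Hall's classical bound) rather than a flaw in your argument.
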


\proof
By hypothesis we have that $\tg_{m+1} N^{m+1} = {1}$ and $\tg_{n+1} G^{n+1} \leq \[N,N\]$. Hence $\tg_{n+1} NG^{n} \leq \[N,N\]$ and whence $N$ satisfies the hypothesis of Lemma \ref{Lem_PrepNil}. Thus $\tg_{rn+1} (N^r ,G^{rn-r+1}) \leq \tg_{r+1} (N^{r+1})$ holds for  all natural numbers $r$. Iterating this process, we obtain that $\tg_{f(i)+1} G^{f(i)+1} \leq \tg_{i+1} N^{i+1}$ for $f(x) =$ $ {x+1} \choose 2$ $ n \ -$ $ x \choose 2$. Choosing $i$ to be $m$ we get that  $\tg_{f(m)+1} G^{f(m)+1} \leq \tg_{m+1} N^{m+1} = \{ 1 \} $. So Corollary \ref{Cor_GamNTri} yields that $G$ is almost nilpotent of class at most ${ {m+1} \choose 2}n-{n \choose 2}+1$.
\qed

\begin{cor}
Let $H$ and $K$ be normal ind-definable subgroups of $G$. If $ \[H, K\] \leq \[ K,K\]$, then for all natural numbers $r$ we have $\tg_r H^{r} \leq \tg_r K^{r}$. If $\[H,K\]$ and $\[H, H\]$ are contained in $\[K, K\]$ then $\tg_r H^{r}$ is contained in $ \tg_r K^{r}$.
\end{cor}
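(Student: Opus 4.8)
The plan is to funnel both inclusions through a single auxiliary claim and then run two short inductions on $r$. The engine is the statement, call it $(\dagger)$, that the first hypothesis by itself already pushes a single copy of $H$ down the lower central series of $K$:
$$ \[H,K\]\leq\[K,K\]\ \Longrightarrow\ \[\tg_{s}K^{s},\,H\]\leq \tg_{s+1}K^{s+1}\ \text{ for all } s\geq 1. $$
Granting $(\dagger)$, the first assertion is exactly its instance $s=r-1$ (so its left-hand side is the mixed commutator $\[\tg_{r-1}K^{r-1},H\]$ carrying one $H$ and $r-1$ copies of $K$; note that some copies of $K$ must survive on the left, since for trivial $K$ the bare inclusion $\tg_r H^r\leq\tg_r K^r$ already fails). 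The second assertion then follows by induction on $r\geq 2$: the base case $r=2$ is the added hypothesis $\[H,H\]\leq\[K,K\]$, and for the step I would write $\tg_{r+1}H^{r+1}=\[\tg_r H^r,H\]$, use the induction hypothesis $\tg_r H^r\leq\tg_r K^r$ together with monotonicity of the almost commutator (Lemma~\ref{Lem_ComBas}(1)) to get $\[\tg_r H^r,H\]\leq\[\tg_r K^r,H\]$, and finish with $(\dagger)$ at $s=r$.

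To prove $(\dagger)$ I would induct on $s$. For $s=1$, symmetry of the almost commutator (Theorem~\ref{Thm_sym}) gives $\[K,H\]=\[H,K\]\leq\[K,K\]=\tg_2 K^2$. For the step, assume $\[\tg_s K^s,H\]\leq\tg_{s+1}K^{s+1}$ and rewrite $\tg_{s+1}K^{s+1}=\[\tg_s K^s,K\]$, so that the target $\[\tg_{s+1}K^{s+1},H\]$ becomes the left-normed triple commutator $\[\tg_s K^s,K,H\]$. I would apply the approximate three-subgroup lemma (Corollary~\ref{Cor_TSL}) with target $M=\tg_{s+2}K^{s+2}$, an intersection of $A$-definable normal subgroups, reducing the job to placing the two cyclic companions inside $M$. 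The companion $\[K,H,\tg_s K^s\]$ is controlled by monotonicity and Lemma~\ref{Lem_NNN}: from $\[K,H\]\leq\[K,K\]=\tg_2 K^2$ one gets $\[\,\[K,H\],\tg_s K^s\,\]\leq\[\tg_2 K^2,\tg_s K^s\]\leq\tg_{s+2}K^{s+2}$, the last step being the index-additivity $2+s=s+2$ supplied by Lemma~\ref{Lem_NNN} (with $N=K$, first exponent $2\geq 2$ as that lemma requires). The companion $\[H,\tg_s K^s,K\]$ is controlled by symmetry and the induction hypothesis: $\[H,\tg_s K^s\]=\[\tg_s K^s,H\]\leq\tg_{s+1}K^{s+1}$, hence $\[\,\[H,\tg_s K^s\],K\,\]\leq\[\tg_{s+1}K^{s+1},K\]=\tg_{s+2}K^{s+2}$. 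Corollary~\ref{Cor_TSL} then delivers $\[\tg_s K^s,K,H\]\leq\tg_{s+2}K^{s+2}$, which is $\[\tg_{s+1}K^{s+1},H\]\leq\tg_{s+2}K^{s+2}$, closing the induction.

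The main obstacle is precisely the bookkeeping in this inductive step: one has to see that the two companion commutators reach $\tg_{s+2}K^{s+2}$ for two different reasons — one via the numerical additivity of indices in Lemma~\ref{Lem_NNN}, the other via the induction hypothesis after a symmetry flip — and one must check the side conditions that make Corollary~\ref{Cor_TSL} legitimate, namely that $H$, $K$ and every $\tg_t K^t$ are normal in $G$ and mutually normalizing, and that the targets $\tg_t K^t$ are intersections of $A$-definable normal subgroups. Wherever an intermediate bound is only obtained up to $\ls$ rather than $\leq$, Corollary~\ref{cor_HHleqK} upgrades it to genuine containment, again because the target is such an intersection. A minor point to record is the degenerate index: $\tg_1 H^1\leq\tg_1 K^1$ would assert $H\leq K$, which is not among the hypotheses, so both displays are to be read for $r\geq 2$, with $r=2$ the genuine base case.
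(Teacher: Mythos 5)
Your mathematics is correct, and it both repairs and reroutes the paper's argument rather than reproducing it. You are right that the first assertion cannot be read literally: for $K$ trivial the hypothesis $\[H,K\]\leq\[K,K\]$ holds vacuously, while $\tg_2H^2\leq\tg_2K^2=\{1\}$ fails for any $H$ with nontrivial almost derived group (and $r=1$ would assert $H\leq K$). The paper's own proof reflects this defect: it works from $\[N,M\]\leq\[N,N\]$, which is the stated hypothesis with the roles of $H$ and $K$ exchanged, and its final line concludes $\tg_rK^r\leq\tg_rH^r$, the reverse of the stated inclusion. Beyond this diagnosis, your route is genuinely different from the paper's. The paper quotients by $\tg_rH^r$ and invokes Lemma \ref{Lem_PrepNil} with $m=1$; then, for the second assertion, it passes to the product $L=HK$, bounds $\[L,L\]\leq\[K,K\]$ via Lemma \ref{Lem_StabMul}, and applies the first assertion twice, to the pairs $(L,K)$ and $(K,L)$. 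You never touch Lemma \ref{Lem_PrepNil} or the product $HK$: your statement $(\dagger)$ is proved from scratch by induction on $s$ using symmetry, Lemma \ref{Lem_NNN} and the three-subgroup lemma (Corollary \ref{Cor_TSL}), and the second assertion then follows by a short induction on $r$ with base case $r=2$. This buys real rigor, not just economy: Lemma \ref{Lem_PrepNil} as stated has the ambient $\M$-group $G$ in its hypothesis, $\tg_{m+1}(N,G^m)\ls\[N,N\]$, which does not follow from the corollary's hypotheses, so the paper's application implicitly replaces $G$ by the ind-definable group $NM$ --- a variant never proved. Your $(\dagger)$ is exactly that missing $m=1$ statement, with $G$ replaced by the normal ind-definable subgroup $H$, and you prove it using only results (Corollary \ref{Cor_TSL}, Lemma \ref{Lem_NNN}, Lemma \ref{Lem_ComBas}, Corollary \ref{cor_HHleqK}) applied to precisely the objects for which they are stated. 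What the paper's route would buy, were the re-ambienting justified, is reuse of the heavy lemma already needed for Corollary \ref{Cor_AppNil}; what yours buys is a self-contained, correct derivation that also makes transparent why $\[H,H\]\leq\[K,K\]$ is exactly the missing base case and why $r\geq 2$ is necessary.
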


\proof
Let $N=H/\tg_r H^{r}$ and $M=K/\tg_r H^{r}$. Then $\[N,M\]$ is contained in $\[N,N\]$. Hence we may apply Lemma \ref{Lem_PrepNil} with $m=1$ and obtain that $$(\tg_r K^r )/\tg_r H^{r} = \tg_r M^r \leq \tg_r (N^{r-1} M) \leq  \tg_r N^{r},$$ which is trivial. Thus $\tg_r K^{r}$ is contained in $\tg_r H^{r}$. 

Consider $L= HK$. Then we can compute that $$\[ L, L\] \leq\[H, H\] \cdot \[K, K\] \cdot  \[ H,K\]=  \[K, K\].$$
By the first part of the corollary we can conclude that $\tg_r H^{r}\leq \tg_r L^{r} = \tg_r K^{r}$.
\qed


\begin{thebibliography}{99}
\bibitem{AltBag} Tuna Altinel, Paul Baginski: \emph{Definable Envelopes of Nilpotent Sugroups of Group with chain condition on centralizers}, hal-00703451 (2012)
\bibitem{RB} Reinhold Baer: \emph{Endlichkeitskriterien f\"ur Kommutatoregruppen}, Math. Ann. 124 (1952).
\bibitem{Bry} Roger M. Bryant: \emph{Groups with the minimal condition on centralizers}, J. Algebra 60, pages 317-383, (1979)
\bibitem{Bry2} Roger M. Bryant, B. Hardley: \emph{Periodic Locally soluble Groups with the minimal condition on centralizers}, J. Algebra  61 (2),  pages 317-383, (1979),
\bibitem{DerWag} Jamshid Derakhshan et Frank O. Wagner: \emph{Nilpotency in groups with chain conditions},
Oxford Quarterly Journal (2), 48:453-468 (1997)
\bibitem{Hall} P. Hall: \emph{Some sufficient conditions for a group to be nilpotent}, Illinois Journal of Mathematics 2, no. 4B, pages 787-801 (1958)
\bibitem{Hai} Franklin Haimo: \emph{The FC-chain of a group}, Canadian Journal of Mathematics 5, pages 498-511, (1953)
\bibitem{Hup} Bertram Huppert: \emph{Endliche Gruppen}, Springer Verlag, Berlin Heidelberg (1967)
\bibitem{bhn} Bernhard H. Neumann: \emph{Groups covered by permutable subsets}, Journal of the London Mathematical Society 29, pages 236-248 (1954)
\bibitem{cm_wsg} C\'edric Milliet: \emph{On Properties of (weakly) small groups}, The Journal of Symbolic Logic
\bibitem{cm_defenv} C\'edric Milliet: \emph{Definable envelopes in groups with simple theories}, Preprint, HAL-00657716 (2012)
\bibitem{PalWag} Daniel Palac\'in, Frank O.\,Wagner: \emph{A Fitting theorem for simple groups}, submitted
\bibitem{PoiWag} Bruno P. Poizat et Frank O. Wagner \emph{Sous-groupes périodiques d'un groupe stable},
Journal of Symbolic Logic, 58:385-400 (1993)
\bibitem{WagFit} Frank O. Wagner: \emph{The Fitting subgroup of a stable group}, Journal of Algebra, 174:599-609, 1995.
\bibitem{Wag_cc}Frank O.\,Wagner: \emph{Commutator Calculus in Simple Theories}, private notes
\bibitem{fow_simth}Frank O.\,Wagner: \emph{Simple Theories}, Kluwer Academic Publishers, Dordrecht (2000)
\end{thebibliography}
\end{document}